\documentclass[reqno]{amsart}
\usepackage[utf8]{inputenc} 
\usepackage[T1]{fontenc}
\usepackage{ae}
\usepackage{aecompl}
\usepackage{bbold}
\usepackage{amssymb}
\usepackage[all]{xypic}
\usepackage[english]{babel}
\usepackage{extarrows}
\usepackage{xparse}
\usepackage[numbers]{natbib}
\usepackage{enumitem} 
\usepackage{url}
\usepackage{hyperref}

\title{Homotopy theory of dg sheaves}

\author{Utsav Choudhury}
\address{Department of Mathematics,
RKM Vivekananda University, India}
\email{prabrishik@gmail.com}
\author{Martin Gallauer Alves de Souza}
\address{Department of Mathematics, University of California, Los Angeles}
\email{gallauer@math.ucla.edu}
\thanks{The first author was supported by the Alexander von Humboldt Foundation, the second author by the Swiss National Science Foundation}

\subjclass[2010]{14F05, 
18F20, 
18G55, 
18D20, 
18F10, 
18G35} \keywords{derived categories, sheaves, model categories, dg categories}
\date{\today{}}

\makeatletter
\let\thetitle\@title
\makeatother
\hypersetup{bookmarksdepth=1, colorlinks, citecolor=blue,
linkcolor=blue,pdfinfo={Title=\thetitle,
Author={Utsav Choudhury and Martin Gallauer Alves de Souza}}}
\newcounter{sat}
\newtheorem{thm}[sat]{Theorem}
\newtheorem{fac}[sat]{Fact}
\newtheorem{lem}[sat]{Lemma}
\newtheorem{cor}[sat]{Corollary}

\newtheorem{pro}[sat]{Proposition}
\theoremstyle{definition}
\newtheorem{dfi}[sat]{Definition}
\newtheorem{exa}[sat]{Example}
\newtheorem{rem}[sat]{Remark}
\numberwithin{sat}{section}

\makeatletter
\DeclareRobustCommand*\cal{\@fontswitch\relax\mathcal}
\makeatother

\DeclareRobustCommand{\gobblefive}[5]{}
\newcommand*{\SkipTocEntry}{\addtocontents{toc}{\gobblefive}}

\usepackage{xspace}

\providecommand{\UV}{\ensuremath{\mathbf{U}_{{\cal V}}}}
\providecommand{\Udg}{\ensuremath{\mathbf{U}_{\mathrm{dg}}}}
\providecommand{\U}{\ensuremath{\mathbf{U}}}
\providecommand{\fact}{\ensuremath{\mathrm{Fact}}}
\providecommand{\cofrep}{\ensuremath{\mathrm{CofRep}}}
\providecommand{\Mod}[1]{\ensuremath{\mathbf{Mod}(#1)}}
\providecommand{\dL}{\ensuremath{\mathrm{L}}}
\providecommand{\dR}{\ensuremath{\mathrm{R}}}
\providecommand{\Ho}{\ensuremath{\mathbf{Ho}}}

\providecommand{\god}{\ensuremath{{\cal G}}}
\DeclareMathOperator{\ch}{\mathbf{Cpl}}
\DeclareMathOperator{\psh}{\mathbf{PSh}}
\DeclareMathOperator{\sh}{\mathbf{Sh}}

\DeclareMathOperator{\dghom}{\underline{hom}_{\mathrm{dg}}}
\providecommand{\totp}{\ensuremath{\mathrm{Tot}^{\scriptscriptstyle\prod}}}
\providecommand{\tots}{\ensuremath{\mathrm{Tot}^{\oplus}}}
\newcommand{\colim}{\operatornamewithlimits{colim}}
\newcommand{\id}{\ensuremath{\mathrm{id}}}
\newcommand{\h}{\ensuremath{\mathrm{H}}}

\newcommand{\one}{\ensuremath{\mathbb{1}}}
\providecommand{\eg}{\mbox{e.\,g.}\xspace}
\providecommand{\ie}{\mbox{i.\,e.}\xspace}
\usepackage{extarrows}

\providecommand{\Z}{\ensuremath{\mathbb{Z}}}
\providecommand{\N}{\ensuremath{\mathbb{N}}}
\providecommand{\simplex}{\ensuremath{\varDelta}}
\providecommand{\simp}{\ensuremath{\mathbf{\Delta}}}
\providecommand{\totp}{\ensuremath{\mathrm{Tot}^{\scriptscriptstyle\prod}}}
\providecommand{\tots}{\ensuremath{\mathrm{Tot}^{\oplus}}}
\newcommand{\sset}{\ensuremath{\simp^{\mathrm{op}}\set}}
\newcommand{\set}{\ensuremath{\mathrm{Set}}}
\DeclareMathOperator{\hmap}{\dR\mathrm{map}}

\begin{document}

\begin{abstract}
  In this note we study the local projective model structure on
  presheaves of complexes on a site, \ie{} we describe its classes of
  cofibrations, fibrations and weak equivalences. In particular, we
  prove that the fibrant objects are those satisfying descent with
  respect to all hypercovers. We also describe cofibrant and fibrant
  replacement functors with pleasant properties.
\end{abstract}

\maketitle
\tableofcontents{}
\section{Introduction}

An important object in different fields of mathematics is the derived
category of sheaves on some site. However, it is well-known that many
constructions and proofs in this setting cannot be performed on the
derived level but require recourse to a \emph{model}. Our goal in this
note is to describe in detail one specific homotopy-theoretic model
for the unbounded derived category of sheaves on an arbitrary
site. Although the model is well-known, there were several facts about
it that we needed in our \cite{choudhury-gallauer:iso-galois} but were
not able to find in the literature, which is why we decided to write
them up. To be useful in other contexts as well, we place ourselves in
a more general setting, in particular we try to make as few
assumptions as possible regarding the site.

Let us quickly give the definition of the model associated to a site
$({\cal C},\tau)$. Start with the category of presheaves of unbounded
complexes on ${\cal C}$ and declare weak equivalences and fibrations
to be objectwise quasi-isomorphisms and epimorphisms,
respectively. This yields the \emph{projective model structure}. The
\emph{$\tau$-local model structure} arises from it by a left Bousfield
localization with respect to $\tau$-local weak equivalences, \ie{}
morphisms inducing isomorphisms on all homology $\tau$-sheaves. The
resulting model category is our model for the derived category of
$\tau$-sheaves.

In~§\ref{sec:uni-dg} we recall the basic properties of the model
category and describe the cofibrations. As an application we construct
in~§\ref{sec:cof} an explicit cofibrant replacement functor which
resolves any presheaf of complexes by representables. The main theorem
of~§\ref{sec:local} states that the $\tau$-fibrant objects are
precisely those presheaves satisfying descent with respect to
$\tau$-hypercovers. The analogous statement for simplicial presheaves
is well-known, and our strategy is to reduce to this case via the
Dold-Kan correspondence. We use the same strategy to prove a
generalization of the Verdier hypercover theorem, expressing the
hypercohomology of a complex of sheaves in terms of hypercovers. We
also describe some modifications to our model and deduce some useful
consequences from the main theorem. In the final section~\ref{sec:fib}
we prove that the Godement resolution defines a fibrant replacement
functor for our model.

We would like to remark that the model described in this note is
not quite arbitrary but has a very satisfying universal property. To
describe it, recall the easy fact from category theory that for a
small category ${\cal C}$, the category of presheaves on ${\cal C}$ is
its \emph{universal (or free) cocompletion}. This means that any
functor from ${\cal C}$ into a cocomplete category factors via a
cocontinuous functor through the Yoneda embedding ${\cal
  C}\to\psh({\cal C})$ in an essentially unique way. This basic idea
finds repercussions in the following two results:
\begin{itemize}
\item For a small dg category ${\cal D}$, the category of
  dg modules $[{\cal D}^{\mathrm{op}},\ch]$ is its
  \emph{universal dg cocompletion}.
\item In~\cite{dugger-universal-hty}, Dugger proves that any functor
  from ${\cal C}$ into a model category factors via a left Quillen
  functor through the category of simplicial presheaves on ${\cal C}$
  with the projective model structure, in an essentially unique
  way. In other words, this is the \emph{universal model category}
  associated to ${\cal C}$.
\end{itemize}
Combining these two examples we naturally arrive at the following
guess: $[{\cal C}^{\mathrm{op}},\ch]$ with the projective model
structure is the \emph{universal model dg category}
associated to the small category ${\cal C}$. We couldn't resist
prepending a section (§\ref{sec:uni-psh}) in order to explain this
result (in fact, a more general version where chain complexes are
replaced by quite arbitrary enriching categories).

Such a statement invites us to conceive of ${\cal C}$ as
\emph{generating} the dg category $[{\cal
  C}^{\mathrm{op}},\ch]$, while the Bousfield localization yielding
the local model structure plays the role of imposing
\emph{relations}. Namely, the localization stipulates that any object
in ${\cal C}$ may be homotopically decomposed into the pieces of any
cover. In a very precise sense then
(cf.~Corollary~\ref{universal-local-dg-model}) our model for the
derived category of $\tau$-sheaves is the universal $\tau$-local model
dg category associated to ${\cal C}$. 

\SkipTocEntry\subsection*{Relation to other works in the literature}
As mentioned above, our motivation for this note lies
in~\cite{choudhury-gallauer:iso-galois}. Most importantly we needed
there a description of the fibrant objects in the local projective
model structure in terms of descent. At the time we were aware of such
descriptions in the "non-linear" case of simplicial presheaves due to
Dugger-Hollander-Isaksen (\cite{dugger-hollander-isaksen}), and in the
"linear" case only under finiteness conditions not satisfied in our
application (\cite{vezzani-fw-14}).  Only after the note had been
written we came across the paper~\cite{hinich:def-sh-alg} by Hinich
which establishes both the linear and the non-linear case without
restrictions (and with a proof different from ours,
cf.~Remark~\ref{existence-local-model-structure}). We believe that the
other results presented here are probably known even if they haven't
all appeared in print. The present note thus serves primarily as a
reference for~\cite{choudhury-gallauer:iso-galois} but we hope it will
be useful to other mathematicians as well.
\section{Universal enriched model categories}
\label{sec:uni-psh}

This section is very much inspired by
Dugger's~\cite{dugger-universal-hty} where he proves the existence of
a universal model category associated to a small category. Our goal is
to establish an analogue of this result in the enriched setting.

``Monoidal'' is an abbreviation for ``unital monoidal''; the monoidal
structure is always denoted by $\otimes$, the unit by $\one$. Fix a
bicomplete closed symmetric monoidal category ${\cal V}$. We are first
going to recall some basics in ${\cal V}$-enriched category theory,
and for this we follow the terminology in~\cite{kelly-enriched-1982}.

\subsection{Free enriched cocompletion}
\label{sec:enriched-kan}
Let ${\cal C}$ and ${\cal M}$ be ${\cal V}$-categories and assume that
${\cal C}$ is small. Recall (\cite[§2]{kelly-enriched-1982}) that
there is a ${\cal V}$-functor category $[{\cal C},{\cal M}]$ whose
underlying category is just the category of ${\cal V}$-functors ${\cal
  C}\to {\cal M}$ together with ${\cal V}$-natural
transformations. Given such a ${\cal V}$-functor $\gamma:{\cal
  C}\to{\cal M}$ consider the ${\cal V}$-functor $\gamma_{*}:{\cal
  M}\to [{\cal C}^{\mathrm{op}},{\cal V}]$ which takes $m$ to ${\cal
  M}(\gamma(\bullet),m)$. In particular, if ${\cal C}={\cal M}$ and
$\gamma$ the identity then $\gamma_{*}$ is the Yoneda embedding
$y:{\cal C}\to[{\cal C}^{\mathrm{op}},{\cal V}]$. As in the classical
case, the Yoneda embedding provides the free cocompletion as we are
now going to explain (see~\cite[Thm.~4.51]{kelly-enriched-1982}).

Recall that a ${\cal V}$-category ${\cal M}$ is cocomplete if it has
all small indexed colimits (sometimes also called weighted
colimits). In practice, the functors ${\cal M}(\bullet,m)_{0}:{\cal
  M}^{\mathrm{op}}_{0}\to{\cal V}_{0}$ often preserve limits (for
example, if ${\cal M}$ is cotensored or if ${\cal V}$ is
conservative). 
In this case cocompleteness is equivalent to ${\cal M}$ being tensored
and the underlying category being cocomplete in the ordinary
sense. The first condition means that there exists a ${\cal
  V}$-bifunctor (called the tensor)
 \begin{equation*}
   \bullet\odot\bullet:{\cal V}\otimes{\cal M}\to{\cal M}
 \end{equation*}
 together with, for each $v\in{\cal V}$ and each
 $m\in{\cal M}$, ${\cal V}$-natural isomorphisms
 \begin{equation*}
   {\cal M}(v\odot m, \bullet)\cong {\cal V}(v,{\cal M}(m,\bullet)).
 \end{equation*}
 Accordingly, a ${\cal V}$-functor is cocontinuous if and only if it
 commutes with tensors and the underlying functor is
 cocontinuous. 
 Dually one defines complete ${\cal V}$-categories and continuous
 ${\cal V}$-functors.

 An example of a cocomplete ${\cal V}$-category is $[{\cal
   C}^{\mathrm{op}},{\cal V}]$ for a small ${\cal V}$-category ${\cal
   C}$. From now on, we denote it by $\UV{\cal
   C}$. The tensor of $v\in{\cal V}$ and $f\in\UV{{\cal C}}$ is given by
 \begin{equation*}
   v\odot f= v_{\mathrm{cst}}\otimes f,
 \end{equation*}
 where $v_{\mathrm{cst}}$ denotes the constant presheaf with value
 $v$, and $\otimes$ denotes the objectwise tensor product in ${\cal
   V}$.
 \begin{fac}\label{enriched-kan} Let $\gamma:{\cal
     C}\to{\cal M}$ be a ${\cal V}$-functor and assume that ${\cal C}$
   is small and ${\cal M}$ is cocomplete.
   \begin{enumerate}
   \item There is a ${\cal V}$-adjunction
     \begin{equation*}
       (\gamma^{*},\gamma_{*}):\UV{\cal C}\to{\cal M},
     \end{equation*}
     where $\gamma^{*}(f)$ is given by the tensor product of $f$ and
     $\gamma$, $f\odot_{{\cal C}}\gamma$.
   \item The association $\gamma\mapsto\gamma^{*}$ induces an
     equivalence of ${\cal V}$-categories
     \begin{equation*}
       [{\cal C},{\cal M}]\simeq [\UV{\cal C},{\cal M}]_{\mathrm{coc}}
     \end{equation*}
     where $(\bullet)_{\mathrm{coc}}$ picks out the cocontinuous
     ${\cal V}$-functors.
   \item There is a canonical isomorphism $\gamma^{*}y\cong \gamma$.
   \end{enumerate}
 The ${\cal V}$-functor $\gamma^{*}$ is called the \emph{left ${\cal
     V}$-Kan extension} of $\gamma$ along the Yoneda embedding.
 \end{fac}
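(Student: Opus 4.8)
The plan is to follow the blueprint of the classical (unenriched) proof that $y\colon{\cal C}\to\psh({\cal C})$ is the free cocompletion, upgrading each step to the ${\cal V}$-enriched setting. Throughout, $\gamma^{*}(f)$ denotes the colimit of $\gamma$ weighted by $f\in\UV{\cal C}$, written $f\odot_{{\cal C}}\gamma$; it exists because ${\cal M}$ is cocomplete, and when ${\cal M}$ is tensored it is the coend $\int^{c\in{\cal C}}f(c)\odot\gamma(c)$. I would first establish (1). Functoriality of weighted colimits makes $\gamma^{*}$ a ${\cal V}$-functor $\UV{\cal C}\to{\cal M}$, and the adjunction isomorphism is the chain
\begin{equation*}
  {\cal M}(\gamma^{*}f,m)\;\cong\;\int_{c}{\cal M}\big(f(c)\odot\gamma(c),m\big)\;\cong\;\int_{c}{\cal V}\big(f(c),{\cal M}(\gamma(c),m)\big)\;=\;\UV{\cal C}(f,\gamma_{*}m),
\end{equation*}
where the first isomorphism is the defining property of the weighted colimit (it is sent by ${\cal M}(-,m)$ to the corresponding weighted limit in ${\cal V}$), the second is the tensor--hom adjunction defining $\odot$, and the last equality is the end formula for the hom-object of the ${\cal V}$-functor category. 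One checks that this composite is ${\cal V}$-natural in $f$ and in $m$, which yields the ${\cal V}$-adjunction $(\gamma^{*},\gamma_{*})$; in particular $\gamma^{*}$ is cocontinuous.

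Next I would prove (3), which is just the enriched co-Yoneda (density) lemma: since $y(c_{0})(c)={\cal C}(c,c_{0})$,
\begin{equation*}
  \gamma^{*}\big(y(c_{0})\big)\;=\;\int^{c}{\cal C}(c,c_{0})\odot\gamma(c)\;\cong\;\gamma(c_{0}),
\end{equation*}
${\cal V}$-naturally in $c_{0}$, so $\gamma^{*}y\cong\gamma$. For (2), the quasi-inverse to $\gamma\mapsto\gamma^{*}$ is restriction along $y$, i.e.\ $F\mapsto Fy$; precomposition with $y$ is a ${\cal V}$-functor $[\UV{\cal C},{\cal M}]\to[{\cal C},{\cal M}]$. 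One direction is exactly (3): $(\gamma^{*})y\cong\gamma$. For the other, let $F\colon\UV{\cal C}\to{\cal M}$ be cocontinuous; the enriched co-Yoneda lemma presents every $f$ as the weighted colimit $f\cong\int^{c}f(c)\odot y(c)$, and applying $F$ and using cocontinuity gives
\begin{equation*}
  F(f)\;\cong\;\int^{c}f(c)\odot F\big(y(c)\big)\;=\;(Fy)^{*}(f),
\end{equation*}
${\cal V}$-naturally in $f$, so $(Fy)^{*}\cong F$. It remains to see the induced maps on hom-objects are isomorphisms: a ${\cal V}$-natural transformation $\gamma^{*}\Rightarrow\delta^{*}$ restricts along $y$ (using (3)) to $\gamma\Rightarrow\delta$, a ${\cal V}$-natural transformation $\gamma\Rightarrow\delta$ induces $\gamma^{*}\Rightarrow\delta^{*}$ by functoriality of weighted colimits, and density shows these assignments are mutually inverse and compatible with composition; this also shows $[\UV{\cal C},{\cal M}]_{\mathrm{coc}}$ is a full sub-${\cal V}$-category, so $\gamma\mapsto\gamma^{*}$ is an equivalence of ${\cal V}$-categories onto it.

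The conceptual content is entirely contained in the enriched Yoneda and co-Yoneda lemmas; the real work — and the main obstacle — is the enriched bookkeeping: verifying that $\gamma^{*}$ is a genuine ${\cal V}$-functor, that all of the displayed isomorphisms are ${\cal V}$-natural (not merely natural on underlying categories), and that the whole package assembles into a ${\cal V}$-equivalence of ${\cal V}$-functor categories. A minor point to handle with care is that ${\cal M}$ is only assumed cocomplete, not tensored, so weighted colimits must be manipulated directly and the coend formulas used only as a shorthand when tensors happen to exist. All of this is carried out in Kelly's treatment of weighted colimits and free cocompletion, to which I would ultimately appeal for the routine verifications.
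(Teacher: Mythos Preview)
Your sketch is correct and is essentially the standard argument; the paper, however, does not prove this statement at all. It is recorded as a \emph{Fact} with a reference to Kelly's treatment of free cocompletion (\cite[Thm.~4.51]{kelly-enriched-1982}), so there is no proof to compare against beyond that citation. Your outline is precisely the route Kelly takes: the adjunction from the universal property of weighted colimits, the co-Yoneda isomorphism for $\gamma^{*}y\cong\gamma$, and density of representables for the equivalence in~(2).

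One small correction: your caveat that ``${\cal M}$ is only assumed cocomplete, not tensored'' is unnecessary. In the enriched sense used in the paper, cocompleteness means having all small weighted colimits, and tensors $v\odot m$ are particular weighted colimits; hence a cocomplete ${\cal V}$-category is automatically tensored, and the coend formula $\gamma^{*}(f)=\int^{c}f(c)\odot\gamma(c)$ is available without further hypotheses. This does not affect the validity of your argument, only its phrasing.
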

 Here, the tensor product of the two ${\cal V}$-functors $f$ and
 $\gamma$ is the coend $\int^{c\in{\cal C}}f(c)\odot
 \gamma(c)$. Notice that part of the statement is the existence of
 $[\UV{\cal C},{\cal M}]_{\mathrm{coc}}$ as a ${\cal V}$-category
 (this is not clear since $\UV{\cal C}$ is not necessarily small).

 If $\beta:{\cal D}\to{\cal C}$ is a ${\cal V}$-functor between small
 ${\cal V}$-categories, we denote $(y\beta)^{*}$ by $\beta^{*}$ if no
 confusion is likely to arise. With this abuse of notation, there is a
 canonical isomorphism
 $(\gamma\beta)^{*}\cong\gamma^{*}\beta^{*}$. Similarly, if
 $\delta:{\cal M}\to{\cal N}$ is a cocontinous functor into another
 cocomplete ${\cal V}$-category ${\cal N}$, then
 $(\delta\gamma)^{*}\cong \delta\gamma^{*}$.

 Assume now that ${\cal C}$ is a (symmetric) monoidal ${\cal
   V}$-category (this is the canonical translation of a (symmetric)
 monoidal structure to the enriched context; or
 see~\cite[p.~2f]{day:closed-cat-functors}). $\UV{\cal C}$ inherits a
 (symmetric) monoidal structure called the (Day) convolution product
 (\cite[Thm.~3.3 and 4.1]{day:closed-cat-functors}). Explicitly, the
 monoidal product of two presheaves $f$ and $g$ is given by
 \begin{equation*}
   f\otimes g=\int^{c,c'} f(c)\otimes g(c')\otimes{\cal
     C}(\bullet,c\otimes c'),
 \end{equation*}
 and the unit by $y(\one)={\cal C}(\bullet,\one)$. It is clear that
 the Yoneda embedding $y:{\cal C}\to\UV{\cal C}$ is (symmetric)
 monoidal.
\begin{lem}\label{enriched-kan-mon}
  In the setting of Fact~\ref{enriched-kan}, assume in addition that
  $\gamma$ is (lax) (symmetric) monoidal, and that the monoidal
  product in ${\cal M}$ commutes with indexed colimits. Then:
  \begin{enumerate}
  \item $\gamma^{*}$ is (lax) (symmetric) monoidal.
  \item The canonical isomorphism $\gamma^{*}y\cong \gamma$ is
    monoidal.
  \item The association $\gamma\mapsto\gamma^{*}$ induces an
    equivalence of ordinary categories
    \begin{equation*}
      {\cal V}\mathrm{-Fun}_{\otimes}({\cal C},{\cal M})\simeq {\cal V}\mathrm{-Fun}_{\mathrm{coc},\otimes}(\UV{\cal C},{\cal M})
    \end{equation*}
    of (lax) (symmetric) monoidal ${\cal V}$-functors.
  \end{enumerate}
\end{lem}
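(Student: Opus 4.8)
The plan is to upgrade the $\mathcal{V}$-adjunction and equivalence of Fact~\ref{enriched-kan} to the monoidal setting by exploiting the universal property of the Day convolution product. The key observation is that $\UV{\cal C}$ with its Day convolution is the \emph{free monoidal cocompletion}: cocontinuous (lax) monoidal functors out of $\UV{\cal C}$ correspond to (lax) monoidal functors out of ${\cal C}$, and this is exactly part (3) we are after. So the strategy is to identify $\gamma^*$ as the image of $\gamma$ under this correspondence, and then read off (1) and (2) as the structural content of that identification.

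\emph{Step 1.} Recall (or re-derive from~\cite[Thm.~3.3, 4.1]{day:closed-cat-functors}) that for a cocomplete monoidal ${\cal V}$-category ${\cal M}$ whose monoidal product preserves indexed colimits in each variable, precomposition with $y$ gives an equivalence between cocontinuous (lax, resp.\ strong) (symmetric) monoidal ${\cal V}$-functors $\UV{\cal C}\to{\cal M}$ and (lax, resp.\ strong) (symmetric) monoidal ${\cal V}$-functors ${\cal C}\to{\cal M}$. In fact this can be bootstrapped from Fact~\ref{enriched-kan}: the convolution product on $\UV{\cal C}$ is itself characterized as the unique cocontinuous-in-each-variable extension of $\otimes_{\cal C}$ along $y\times y$, so any cocontinuous functor intertwining the products is determined by what it does on representables.

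\emph{Step 2.} Given a lax (symmetric) monoidal $\gamma$, produce the lax structure on $\gamma^* = (\bullet)\odot_{\cal C}\gamma$. The structure map $\gamma^*(f)\otimes_{\cal M}\gamma^*(g)\to\gamma^*(f\otimes g)$ is built by a coend manipulation: $\gamma^*(f)\otimes\gamma^*(g) = \bigl(\int^{c}f(c)\odot\gamma(c)\bigr)\otimes\bigl(\int^{c'}g(c')\odot\gamma(c')\bigr)$, which by cocontinuity of $\otimes_{\cal M}$ equals $\int^{c,c'}(f(c)\otimes g(c'))\odot(\gamma(c)\otimes\gamma(c'))$; now apply the lax constraint of $\gamma$, namely $\gamma(c)\otimes\gamma(c')\to\gamma(c\otimes c')$, and use the universal property of the convolution coend $f\otimes g=\int^{c,c'}f(c)\otimes g(c')\otimes{\cal C}(\bullet,c\otimes c')$ together with the definition of $\gamma^*$ again to land in $\gamma^*(f\otimes g)$. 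The unit constraint comes from $\one_{\cal M}\to\gamma(\one_{\cal C})=\gamma^*(y(\one))=\gamma^*(\one_{\UV{\cal C}})$. One then checks associativity, unitality, and (in the symmetric case) compatibility with the symmetry — these are diagram chases reducing, via the universal property of coends, to the corresponding axioms for $\gamma$. The strong case is the observation that if $\gamma$'s constraints are isomorphisms then so are the ones just constructed, since coends and $\odot$ preserve isomorphisms. Part (2) is then immediate: unwinding the constraint on $\gamma^*(y(\one))\to\gamma^*(y(c)\otimes y(c'))$ against $\gamma(c)\otimes\gamma(c')\to\gamma(c\otimes c')$ shows the isomorphism $\gamma^*y\cong\gamma$ of Fact~\ref{enriched-kan}(3) carries one monoidal structure to the other.

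\emph{Step 3.} For part (3), the functor in one direction sends $\gamma\mapsto\gamma^*$ with the structure from Step~2; in the other direction it sends a cocontinuous lax monoidal $\Phi$ to $\Phi y$, which is lax monoidal because $y$ is strong monoidal. That these are mutually inverse up to monoidal natural isomorphism follows from Fact~\ref{enriched-kan}(2) on underlying ${\cal V}$-functors, once one checks the natural isomorphisms there are monoidal — again a coend chase. I expect the \textbf{main obstacle} to be purely bookkeeping: verifying the hexagon/pentagon coherence for the constraints constructed in Step~2, since one must commute the lax constraints of $\gamma$ past several coends and tensors and invoke the (symmetric) monoidal structure of the convolution product, which is itself defined by a coend. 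None of this is deep, but it is the place where care is needed; everything else is a formal consequence of Fact~\ref{enriched-kan} and the defining universal property of Day convolution.
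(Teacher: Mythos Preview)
Your proposal is correct and follows essentially the same approach as the paper: the paper's proof consists precisely of the coend manipulation you outline in Step~2 (pulling $\otimes_{\cal M}$ inside the coends via cocontinuity, applying the lax constraint of $\gamma$, then recognizing the Day convolution coend), together with the unit map $\one\to\gamma(\one)\cong\gamma^*(\one)$, after which it explicitly leaves the remaining coherence checks and parts (2)--(3) to the reader. Your Steps~1 and~3 simply spell out what the paper elides, so there is no substantive difference in strategy.
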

\begin{proof}
  Let $f,g\in\UV{\cal C}$. The (lax) monoidal structure on $\gamma^{*}$
  is defined as follows:
    \begin{align*}
      \left( \int f\odot \gamma \right)\otimes \left( \int g\odot \gamma
    \right)      &\cong \int^{c,d}(f(c)\otimes g(d))\odot (\gamma(c)\otimes
      \gamma(d))\\
      &\to \int^{c,d}(f(c)\otimes g(d))\odot (\gamma(c\otimes d))\\
      &\cong \int^{e} \left( \int^{c,d}f(c)\otimes
        g(d)\otimes {\cal C}(e,c\otimes d)
      \right)\odot \gamma(e)\\
      &\cong \int (f\otimes g) \odot \gamma
    \end{align*}
  and
  \begin{equation*}
    \one\to \gamma(\one)\cong \gamma^{*}(\one).
  \end{equation*}
We leave the details to the reader.
\end{proof}
In this sense, if ${\cal C}$ is (symmetric) monoidal then $\UV{\cal
  C}$ is the free \emph{(symmetric) monoidal} ${\cal
  V}$-cocompletion. Notice also that the ``pseudo-functoriality''
mentioned above, to wit $(\gamma\beta)^{*}\cong\gamma^{*}\beta^{*}$
and $(\delta\gamma)^{*}\cong\delta\gamma^{*}$, is compatible with
monoidal structures.
\subsection{Enriched model categories}
\label{sec:v-modcat}
We now discuss the interplay between basic enriched category theory as
above and Quillen model structures. From now on we assume that the
underlying category ${\cal V}_{0}$ is a symmetric monoidal model
category in the sense of~\cite[Def.~4.2.6]{hovey-modelcategories}. We
also assume that this model structure is cofibrantly generated.

Fix a small ordinary category ${\cal C}$ and set ${\cal C}[{\cal V}]$
to be the associated free ${\cal V}$-category. It has the same objects
as ${\cal C}$ and the ${\cal V}$-structure is given by
\begin{equation*}
  {\cal C}[{\cal V}](c,c')=\coprod_{{\cal C}(c,c')}\one
\end{equation*}
with an obvious composition. By definition, giving a ${\cal
  V}$-functor ${\cal C}[{\cal V}]\to{\cal M}$ into a ${\cal V}$-model
category ${\cal M}$ is the same as giving an (ordinary) functor ${\cal
  C}\to{\cal M}_{0}$. In the sequel, we will often write abusively
${\cal C}\to{\cal M}$, sometimes thinking of the datum as a ${\cal
  V}$-functor, sometimes as an ordinary functor. We are positive that
this will not lead to any confusion.

Thus the general small ${\cal V}$-category ${\cal C}$
in~§\ref{sec:enriched-kan} will now always be of this special form. We
impose this restriction because it simplifies most of the statements
and proofs drastically, and because it is all we will need later on.

Since the underlying category of $\UV{\cal C}:=\UV{\cal C}[{\cal V}]$
is just the category of presheaves on ${\cal C}$ with values in ${\cal
  V}$, the following result is well-known.
\begin{fac}\label{functorcat-model}\mbox{}
  \begin{enumerate}
  \item $(\UV{\cal C})_{0}$ admits a cofibrantly generated model
    structure with weak equivalences and fibrations defined objectwise.
   \item If ${\cal V}_{0}$ is left (resp. right) proper then so
    is $(\UV{\cal C})_{0}$.
  \item If ${\cal V}_{0}$ is combinatorial (resp. tractable, cellular)
    then so is $(\UV{\cal C})_{0}$.
  \item If ${\cal V}_{0}$ is stable then so is $(\UV{\cal C})_{0}$.
  \end{enumerate}
\end{fac}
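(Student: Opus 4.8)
The plan is to recognise $(\UV{\cal C})_0$ as the ordinary diagram category $[{\cal C}^{\mathrm{op}},{\cal V}_0]$ and to transport the model structure of ${\cal V}_0$ along the evaluation functors by the recognition (transfer) theorem for cofibrantly generated model categories, as in Hirschhorn's book or \cite{hovey-modelcategories}. For $c\in{\cal C}$ write $\mathrm{ev}_c\colon(\UV{\cal C})_0\to{\cal V}_0$, $f\mapsto f(c)$; it has a left adjoint $F_c$ with $F_c(v)=v\odot y(c)$, so that $F_c(v)(c')=\coprod_{{\cal C}(c',c)}v$. If $I,J$ denote sets of generating cofibrations and generating trivial cofibrations for ${\cal V}_0$, I would take $I_{\cal C}=\{F_c(i)\mid c\in{\cal C},\ i\in I\}$ and $J_{\cal C}=\{F_c(j)\mid c\in{\cal C},\ j\in J\}$ as the candidate generating sets.

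For part (1) one checks the (standard) hypotheses of the recognition theorem. The domains of $I_{\cal C}$ and $J_{\cal C}$ are small because colimits in $(\UV{\cal C})_0$ are computed objectwise, so each $\mathrm{ev}_c$ preserves filtered colimits, and domains in ${\cal V}_0$ are small. By adjunction, a map $p$ has the right lifting property against $I_{\cal C}$ (resp.\ $J_{\cal C}$) iff every $\mathrm{ev}_c(p)$ is a trivial fibration (resp.\ fibration) in ${\cal V}_0$, i.e.\ iff $p$ is an objectwise trivial fibration (resp.\ objectwise fibration); since an objectwise trivial fibration is the same as an objectwise fibration that is an objectwise weak equivalence, the two classes match the intended factorisation systems. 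Finally, pushing out an $F_c(j)$ is objectwise a pushout of a coproduct of copies of $j$, hence an objectwise trivial cofibration, so every relative $J_{\cal C}$-cell complex is an objectwise weak equivalence. I would also record here, for later use, that projective cofibrations (resp.\ trivial cofibrations) are objectwise cofibrations (resp.\ trivial cofibrations): each $F_c(i)$ is an objectwise cofibration, and $\mathrm{ev}_{c'}$, being a left adjoint, preserves the pushouts, transfinite composites and retracts that build an arbitrary cofibration; the trivial case follows since projective trivial cofibrations are exactly the projective cofibrations that are objectwise weak equivalences. In particular every $\mathrm{ev}_c$ is simultaneously a left and a right Quillen functor.

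Parts (2) and (3) are then formal. A pushout of an objectwise weak equivalence along a projective (hence objectwise) cofibration is computed objectwise, so left properness of ${\cal V}_0$ yields left properness of $(\UV{\cal C})_0$, and dually for right properness using that fibrations are objectwise. If ${\cal V}_0$ is combinatorial, then $(\UV{\cal C})_0$ is cofibrantly generated by (1) and its underlying category, being diagrams in a locally presentable category, is locally presentable, hence it is combinatorial; it is tractable if ${\cal V}_0$ is, because the left Quillen functors $F_c$ carry cofibrant domains of $I,J$ to cofibrant objects; and cellularity transfers because compactness and the smallness conditions are tested through the colimit-preserving $\mathrm{ev}_c$, while "effective monomorphism" is an objectwise condition in a diagram category. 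Lastly, $(\UV{\cal C})_0$ is pointed whenever ${\cal V}_0$ is, the zero object being the constant presheaf at $0$.

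For part (4), assume ${\cal V}_0$ is stable; by the previous remark $(\UV{\cal C})_0$ is pointed, and it remains to show that the unit $X\to\Omega\Sigma X$ and the counit $\Sigma\Omega X\to X$ of the adjunction $(\Sigma,\Omega)$ on $\Ho((\UV{\cal C})_0)$ are isomorphisms. The family $\{\mathrm{ev}_c\}_{c\in{\cal C}}$ is jointly conservative on $\Ho((\UV{\cal C})_0)$, since a morphism is a projective weak equivalence exactly when every $\mathrm{ev}_c$ sends it to a weak equivalence; and since each $\mathrm{ev}_c$ is both left and right Quillen, the functor it induces on homotopy categories commutes with $\Sigma$ and with $\Omega$. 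Hence $\mathrm{ev}_c$ carries the unit $X\to\Omega\Sigma X$ to the unit $\mathrm{ev}_cX\to\Omega\Sigma\,\mathrm{ev}_cX$ in $\Ho({\cal V}_0)$, which is an isomorphism by stability of ${\cal V}_0$; joint conservativity then forces $X\to\Omega\Sigma X$ to be an isomorphism, and likewise for the counit. The statement as a whole is classical, so I do not expect a genuine obstacle; the only points demanding a little care are the interaction in (4) between the evaluation functors and the derived functors $\Sigma,\Omega$ — which is precisely what the "bi-Quillen plus jointly conservative" observation handles — and the routine but slightly fussy verification of the recognition-theorem hypotheses in (1) when ${\cal V}_0$ is assumed only cofibrantly generated rather than combinatorial.
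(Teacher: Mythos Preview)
Your proposal is correct and follows the standard route: the paper itself gives no argument beyond citing \cite[Thm.~11.6.1, Thm.~13.1.14, Pro.~12.1.5]{hirschhorn:model-cat-loc} and \cite[Thm.~2.14]{barwick-modcat-bousfield} for parts (1)--(3) and declaring (4) ``obvious'', and what you have written is precisely a sketch of the contents of those references together with a clean justification of the ``obvious'' step. In particular, your treatment of (4) via the bi-Quillen, jointly conservative family $\{\mathrm{ev}_c\}$ is a perfectly good way to make the word ``obvious'' honest.
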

This is called the \emph{projective model structure}. If not mentioned
otherwise, we will consider $\UV{\cal C}$ as endowed with the
projective model structure from now on.
\begin{proof}
  See~\cite[Thm.~11.6.1]{hirschhorn:model-cat-loc} for the first
  statement, \cite[Thm.~13.1.14]{hirschhorn:model-cat-loc} for the
  second, \cite[Pro.~12.1.5]{hirschhorn:model-cat-loc} and
  \cite[Thm.~2.14]{barwick-modcat-bousfield} for the third. The last
  statement is obvious.
\end{proof}

\begin{dfi}
  Let ${\cal M}$ and ${\cal N}$ be ${\cal V}$-categories with model
  structures on their underlying categories. A ${\cal V}$-adjunction
  $(\delta,\varepsilon):{\cal M}\to{\cal N}$ is called a \emph{Quillen
    ${\cal V}$-adjunction} if the underlying adjunction
  $(\delta_{0},\varepsilon_{0}):{\cal M}_{0}\to{\cal N}_{0}$ is a
  Quillen adjunction. In that case $\delta$ is called a \emph{left},
  $\varepsilon$ a \emph{right Quillen ${\cal V}$-functor}.
\end{dfi}
We now come back to the situation of Fact~\ref{enriched-kan}. The
question we should like to answer is: When is
$(\gamma^{*},\gamma_{*})$ a Quillen ${\cal V}$-adjunction?
\begin{lem}\label{enriched-kan-left-quillen}Assume that ${\cal M}_{0}$ is
  endowed with a model structure. The following conditions are equivalent:
  \begin{enumerate}
  \item $(\gamma^{*},\gamma_{*})$ is a Quillen ${\cal V}$-adjunction.
  \item For each $c\in\mathcal{C}$,
    ${\cal M}_{0}(\gamma(c),\bullet)$ is a right Quillen functor.
  \item For each $c\in\mathcal{C}$, $\bullet\odot \gamma(c)$ is a left
    Quillen functor.
  \end{enumerate}
\end{lem}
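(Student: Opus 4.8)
The plan is to show the equivalence by proving a cycle of implications, using the characterization of Quillen adjunctions in terms of one functor preserving (trivial) fibrations, together with the enriched adjunction data of Fact~\ref{enriched-kan}. Recall that a ${\cal V}$-adjunction is a Quillen ${\cal V}$-adjunction precisely when the underlying right adjoint $\gamma_{*,0}$ preserves fibrations and trivial fibrations (equivalently, when $\gamma^{*}_{0}$ preserves cofibrations and trivial cofibrations). So all three conditions will be unwound in terms of these lifting properties.

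First I would treat the implication (1)$\Rightarrow$(2). Assuming $(\gamma^{*},\gamma_{*})$ is a Quillen ${\cal V}$-adjunction, I would observe that for each $c\in{\cal C}$ the functor ${\cal M}_{0}(\gamma(c),\bullet)$ factors as ${\cal M}_{0}\xrightarrow{\gamma_{*,0}}(\UV{\cal C})_{0}\xrightarrow{\mathrm{ev}_{c}}{\cal V}_{0}$, since $\gamma_{*}(m)={\cal M}(\gamma(\bullet),m)$ by definition and evaluation at $c$ recovers ${\cal M}(\gamma(c),m)$. Now $\gamma_{*,0}$ is right Quillen by hypothesis, and the evaluation functor $\mathrm{ev}_{c}:(\UV{\cal C})_{0}\to{\cal V}_{0}$ is right Quillen for the projective model structure (indeed fibrations and weak equivalences there are defined objectwise, so $\mathrm{ev}_c$ preserves them, and its left adjoint is the "free presheaf on $c$" functor). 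A composite of right Quillen functors is right Quillen, giving (2). For (2)$\Rightarrow$(1), I would run this in reverse: to check $\gamma_{*,0}$ preserves fibrations and trivial fibrations, it suffices (again because the projective model structure is defined objectwise) to check that each composite $\mathrm{ev}_{c}\circ\gamma_{*,0}={\cal M}_{0}(\gamma(c),\bullet)$ preserves them, which is exactly condition (2).

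The equivalence (2)$\Leftrightarrow$(3) is then purely formal: the tensor–hom enriched adjunction ${\cal M}(v\odot m,\bullet)\cong{\cal V}(v,{\cal M}(m,\bullet))$ specialized to $m=\gamma(c)$ exhibits $\bullet\odot\gamma(c):{\cal V}_{0}\to{\cal M}_{0}$ as the left adjoint of ${\cal M}_{0}(\gamma(c),\bullet):{\cal M}_{0}\to{\cal V}_{0}$. A left–right adjoint pair between model categories is a Quillen adjunction iff the right adjoint is right Quillen iff the left adjoint is left Quillen, so (2) and (3) are two names for the same condition. I would state this and leave the trivial verification that the cotensor makes ${\cal M}_0$ a well-defined enriched hom to the reader.

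The main subtlety — really the only point requiring care rather than bookkeeping — is the size issue flagged right after Fact~\ref{enriched-kan}: $\UV{\cal C}$ is large, so one should make sure the projective model structure on $(\UV{\cal C})_0$ really is available and that $\gamma^{*}$ is genuinely a left adjoint at the level of underlying categories. But both are granted to us: Fact~\ref{functorcat-model} supplies the (cofibrantly generated) projective model structure, and Fact~\ref{enriched-kan}(1) supplies the ${\cal V}$-adjunction $(\gamma^{*},\gamma_{*})$, hence in particular the underlying ordinary adjunction. With those in hand the argument is the short formal one above; I do not expect any real obstacle.
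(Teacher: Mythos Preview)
Your proposal is correct and takes essentially the same approach as the paper: the paper's proof simply says that (2)$\Leftrightarrow$(3) is clear (from the tensor--hom adjunction) and that (1)$\Leftrightarrow$(2) follows from the description of $\gamma_{*}$ together with the fact that fibrations and trivial fibrations in the projective model structure are detected objectwise. Your argument unpacks exactly these two observations---factoring ${\cal M}(\gamma(c),\bullet)$ as $\mathrm{ev}_c\circ\gamma_{*}$ and invoking the adjunction $(\bullet\odot\gamma(c),{\cal M}(\gamma(c),\bullet))$---so the content is identical, only spelled out in more detail.
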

\begin{proof}
  The equivalence between the last two conditions is clear. The
  equivalence between the first two conditions follows from the
  description of $\gamma_{*}$ given above and the fact that we imposed
  the projective model structure on $(\UV{\cal C})_{0}$.
\end{proof}
In particular, these equivalent conditions are satisfied if the image
of $\gamma$ consists of cofibrant objects, and the tensor on ${\cal
  M}$ is a ``Quillen ${\cal V}$-adjunction of two variables'', \ie{} a
${\cal V}$-adjunction of two variables such that the underlying data
form a Quillen adjunction of two variables in the sense
of~\cite[Def.~4.2.1]{hovey-modelcategories}.
\begin{dfi}
  A \emph{model ${\cal V}$-category} is a bicomplete ${\cal
    V}$-category ${\cal M}$ together with a model structure on ${\cal
    M}_{0}$ such that
  \begin{itemize}
  \item the tensor is a Quillen ${\cal V}$-adjunction of two variables;
  \item for any cofibrant object $m\in{\cal M}$, $\one_{c}\odot m\to
    \one\odot m$ is a weak equivalence, for a cofibrant replacement
    $\one_{c}\to\one$.
  \end{itemize}

  A \emph{(symmetric) monoidal model ${\cal V}$-category} is a model
  ${\cal V}$-category ${\cal M}$ together with a Quillen ${\cal
    V}$-adjunction of two variables $\otimes:{\cal M}\otimes{\cal
    M}\to{\cal M}$ with a unit, and associativity (and symmetry)
  constraints satisfying the usual axioms.
\end{dfi}
These are equivalent to the definitions in~\cite[Def.~4.2.18,
4.2.20]{hovey-modelcategories}. Also, it is a straight-forward
generalization of the notion of a simplicial model
category.
\begin{exa}\mbox{}
  \begin{enumerate}
  \item If ${\cal V}$ is the category of simplicial sets with the
    standard model structure then we recover the notion of a
    simplicial model category.
  \item Our main example will be obtained by taking ${\cal V}$ to be
    the category of (unbounded) chain complexes of $\Lambda$-modules,
    $\Lambda$ a (commutative unital) ring, with the projective model
    structure and the usual tensor product. A model ${\cal
      V}$-category will be called a model dg category. See
    §\ref{sec:uni-dg}.
  \end{enumerate}
\end{exa}

\begin{fac}\label{uni-model-v-cat}
  $\UV{\cal C}$ is a model ${\cal V}$-category. Moreover, if ${\cal
    C}$ is (symmetric) monoidal and the unit in ${\cal V}$ cofibrant,
  then $\UV{\cal C}$ is a (symmetric) monoidal model ${\cal
    V}$-category for the Day convolution product.
\end{fac}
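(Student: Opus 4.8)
The plan is to verify the two bullets in the definition of a model ${\cal V}$-category for ${\cal M}=\UV{\cal C}$, and then the monoidal refinement. For the first bullet — that the tensor $\bullet\odot\bullet:{\cal V}\otimes\UV{\cal C}\to\UV{\cal C}$ is a Quillen ${\cal V}$-adjunction of two variables — I would argue objectwise. Recall that $v\odot f=v_{\mathrm{cst}}\otimes f$, computed objectwise in ${\cal V}$, and that cofibrations and weak equivalences in the projective model structure are detected objectwise on a set of generators; more precisely, the generating (trivial) cofibrations of $(\UV{\cal C})_0$ can be taken to be $y(c)\otimes(i)$ for $c\in{\cal C}$ and $i$ a generating (trivial) cofibration of ${\cal V}_0$ (this is how the projective model structure is built in Fact~\ref{functorcat-model}). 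So the pushout-product axiom for $\odot$ reduces, via the standard adjunction argument, to checking it on generators, where it becomes the pushout-product axiom in ${\cal V}_0$ tensored with a representable — and tensoring with $y(c)$ is exact and preserves projective (trivial) cofibrations. The unit axiom (second bullet) is even easier: for cofibrant $f\in\UV{\cal C}$, the map $\one_c\odot f\to\one\odot f$ is $(\one_c)_{\mathrm{cst}}\otimes f\to\one_{\mathrm{cst}}\otimes f$, which is a weak equivalence objectwise because $f(c)$ is cofibrant in ${\cal V}_0$ for each $c$ (projective-cofibrant presheaves are objectwise cofibrant) and ${\cal V}_0$ satisfies the unit axiom. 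Hence one checks it on a generating cofibrant object, or simply objectwise.

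For the monoidal part, assume ${\cal C}$ is (symmetric) monoidal and $\one\in{\cal V}$ is cofibrant. One must show the Day convolution $\otimes:\UV{\cal C}\otimes\UV{\cal C}\to\UV{\cal C}$ is a Quillen ${\cal V}$-adjunction of two variables with cofibrant-enough unit. The pushout-product axiom again reduces to generators: the generating cofibrations are (up to the usual manipulations) of the form $y(c)\otimes i$, and the key computation is $y(c)\otimes y(c')\cong y(c\otimes c')$, which holds because $y$ is (symmetric) monoidal (stated just before Lemma~\ref{enriched-kan-mon}). Thus the pushout-product of $y(c)\otimes i$ and $y(c')\otimes j$ is identified with $y(c\otimes c')\otimes(i\,\square\,j)$, and we invoke the pushout-product axiom in ${\cal V}_0$ together with the fact that $y(c\otimes c')\otimes(-)$ preserves (trivial) projective cofibrations. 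For the unit axiom of the monoidal structure: the unit is $y(\one)={\cal C}(\bullet,\one)$, which as a presheaf is a coproduct of copies of $\one\in{\cal V}$ in each degree; since $\one$ is cofibrant in ${\cal V}_0$ and coproducts of cofibrant objects are cofibrant, $y(\one)$ is objectwise cofibrant, and one checks that $(y(\one))_c\otimes m\to y(\one)\otimes m$ behaves correctly — or, more cleanly, observe that $y(\one)$ is already cofibrant in the projective model structure (it is a coproduct of representables tensored with the cofibrant unit) so no replacement is needed.

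I expect the main obstacle to be bookkeeping rather than anything deep: correctly identifying a convenient set of generating (trivial) cofibrations for the projective model structure and checking that the pushout-product construction is compatible with the coend formulas for $\odot$ and for Day convolution — in particular justifying the reduction "check the pushout-product axiom on generators" in the ${\cal V}$-enriched setting, where one should be a little careful that the relevant adjunctions of two variables are genuinely ${\cal V}$-adjunctions and that colimits of (trivial) cofibrations used to build general cofibrations from generators are preserved by the functors in question. Everything else follows by transporting the corresponding axioms from ${\cal V}_0$ along the exact, colimit-preserving functors $y(c)\otimes(-)$, using that projective-cofibrant presheaves are objectwise cofibrant for the unit axioms.
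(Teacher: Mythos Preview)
Your plan is correct and follows the standard route: reduce the pushout-product axiom for the tensor $\odot$ and for the Day convolution to generating (trivial) cofibrations of the form $i\odot y(c)$, use $y(c)\otimes_{\mathrm{Day}} y(c')\cong y(c\otimes c')$, and invoke the corresponding axioms in ${\cal V}_{0}$; the unit axioms follow because projective-cofibrant presheaves are objectwise cofibrant and $y(\one_{{\cal C}})$ is projective cofibrant once $\one_{{\cal V}}$ is. There are no genuine gaps, only the bookkeeping you already flagged.

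That said, note that the paper does not really give a proof here: the statement is recorded as a \emph{Fact}, with the first assertion dismissed as ``straightforward to check'' and the monoidal part simply attributed to \cite[Pro.~2.2.15]{Isaacson:cubical-hty-mon-mod-cat}. So your write-up is not so much an alternative to the paper's argument as an unpacking of it --- essentially you are reconstructing what Isaacson does. The advantage of writing it out is self-containment; the paper's choice buys brevity and an authoritative reference for the reader who wants the details.
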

\begin{proof}
  The first statement is straightforward to check. The second
  is~\cite[Pro.~2.2.15]{Isaacson:cubical-hty-mon-mod-cat}.
\end{proof}

Notice that if ${\cal C}$ is cartesian monoidal then the Day
convolution product coincides with the objectwise monoidal product on
$\UV{\cal C}$.
\subsection{Statement and proof}
\label{sec:result}
Our goal is to establish $y:{\cal C}\to\UV{\cal C}$ (really, ${\cal
  C}[{\cal V}]\to \UV{\cal C}$) as the universal functor into a model
${\cal V}$-category. But first, we need to make precise what we mean
by the universality in the statement. For this fix a model ${\cal
  V}$-category ${\cal M}$ and a functor $\gamma:{\cal C}\to{\cal
  M}$. Define a \emph{factorization of $\gamma$ through $y$} to be a
pair $(L,\eta)$ where $L:\UV{\cal C}\to{\cal M}$ is a left Quillen
${\cal V}$-functor, and $\eta:Ly\to \gamma$ a natural transformation
which is objectwise a weak equivalence. A morphism of such
factorizations $(L,\eta)\to (L',\eta')$ is a natural transformation
$L\to L'$ compatible with $\eta$ and $\eta'$. This clearly defines a
category $\fact(\gamma,y)$.

\begin{pro}\label{pro:universality}
  Assume that the unit in ${\cal V}$ is cofibrant. For any $\gamma$,
  the category $\fact(\gamma,y)$ is contractible.
\end{pro}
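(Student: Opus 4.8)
The strategy mirrors Dugger's argument for the universal model category, adapted to the enriched setting via Fact~\ref{enriched-kan}. First I would show that $\fact(\gamma,y)$ is nonempty. Given $\gamma:{\cal C}\to{\cal M}$, one cannot in general apply Lemma~\ref{enriched-kan-left-quillen} directly, since $\gamma$ need not land in cofibrant objects. So the first move is to replace $\gamma$ by a cofibrant version: pick a functorial cofibrant replacement $Q$ on ${\cal M}_0$ (available since ${\cal M}_0$ is a model category — for the cofibrant generation one can invoke the small object argument, though strictly we only need \emph{some} functorial cofibrant replacement, which exists in the cofibrantly generated case) and set $\tilde\gamma(c)=Q\gamma(c)$, with the natural weak equivalence $\tilde\gamma\to\gamma$. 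Now $\tilde\gamma$ takes cofibrant values, so by the remark following Lemma~\ref{enriched-kan-left-quillen} (using that the tensor on the model ${\cal V}$-category ${\cal M}$ is a Quillen ${\cal V}$-adjunction of two variables) the pair $(\tilde\gamma^{*},\tilde\gamma_{*})$ is a Quillen ${\cal V}$-adjunction. By Fact~\ref{enriched-kan}(3) we have $\tilde\gamma^{*}y\cong\tilde\gamma$, and composing with $\tilde\gamma\to\gamma$ gives an objectwise weak equivalence $\tilde\gamma^{*}y\to\gamma$. Hence $(\tilde\gamma^{*},\tilde\gamma\to\gamma)$ is an object of $\fact(\gamma,y)$, and the category is nonempty. (This is the one place the cofibrancy of the unit in ${\cal V}$ enters: it guarantees, via Fact~\ref{uni-model-v-cat}, that $\UV{\cal C}$ is genuinely a model ${\cal V}$-category, so that "left Quillen ${\cal V}$-functor out of $\UV{\cal C}$" behaves well, and also ensures $y(c)=\one\odot y(c)$ is cofibrant.)

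Next, to go from nonempty to contractible, I would exhibit a particularly simple object of $\fact(\gamma,y)$ that all others map to (or from), i.e.\ produce a zig-zag of natural transformations connecting the identity functor on $\fact(\gamma,y)$ to a constant functor. Concretely: for \emph{any} factorization $(L,\eta)$, I want natural maps relating $L$ to the "standard" factorization built from $Ly$. The point is that $Ly$ is a functor ${\cal C}\to{\cal M}$ landing in cofibrant objects (left Quillen functors preserve cofibrant objects, and $y(c)$ is cofibrant), so $(Ly)^{*}$ is defined and left Quillen by Lemma~\ref{enriched-kan-left-quillen}; moreover, by the universal property in Fact~\ref{enriched-kan}(2) (the equivalence $[{\cal C},{\cal M}]\simeq[\UV{\cal C},{\cal M}]_{\mathrm{coc}}$) and the pseudofunctoriality $(\delta\gamma)^{*}\cong\delta\gamma^{*}$, we get a canonical comparison: since $L$ is cocontinuous, $L\cong L\circ(\mathrm{id})^{*}\cong (Ly)^{*}$ — wait, more carefully, $L=L\circ(y^{*})$ where $y^{*}=\id_{\UV{\cal C}}$, and cocontinuity gives $L\cong(Ly)^{*}$ canonically. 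So in fact every object $(L,\eta)$ of $\fact(\gamma,y)$ is canonically isomorphic to one of the form $((Ly)^{*},\bar\eta)$ where $\bar\eta:(Ly)^{*}y\cong Ly\xrightarrow{\eta}\gamma$. Thus $\fact(\gamma,y)$ is equivalent to the category whose objects are functors $\delta:{\cal C}\to{\cal M}$ with values in cofibrant objects equipped with an objectwise weak equivalence $\delta\to\gamma$, and morphisms the evident ones.

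The final step is to show \emph{this} category is contractible. Here I would use the functorial cofibrant replacement again: sending $(\delta\to\gamma)$ to $(Q\delta\to\gamma)$ — no, better, there is a terminal-up-to-homotopy object, or one builds an explicit contracting homotopy. The cleanest approach: the assignment $(\delta,w)\mapsto$ the factorization through the functorial cofibrant replacement $(\tilde\gamma, \tilde\gamma\to\gamma)$ is a constant functor, and I claim there are natural transformations $\id\Leftarrow ? \Rightarrow \mathrm{const}_{(\tilde\gamma,\ldots)}$. For a given $(\delta,w:\delta\to\gamma)$, functoriality of $Q$ applied to $w$ gives $Q\delta\to Q\gamma=\tilde\gamma$, and since $\delta$ is already cofibrant there is a natural weak equivalence; using the mapping-cylinder / functorial factorization in ${\cal M}_0$ one can interpolate, producing a natural zig-zag of length two (through $Q\delta$, say, with maps $\delta\leftarrow ?\to Q\delta\to\tilde\gamma$ made compatible with the augmentations to $\gamma$). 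Assembling these into natural transformations of functors $\fact(\gamma,y)\to\fact(\gamma,y)$ shows the identity functor is connected by a zig-zag to a constant functor, hence the nerve is contractible.

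The main obstacle I expect is the bookkeeping in this last step: making the homotopy/replacement constructions \emph{natural} in the factorization, and checking compatibility with the augmentation maps to $\gamma$, all the way up the zig-zag. One must be careful that the functorial factorizations in ${\cal M}_0$ (from cofibrant generation) interact correctly with the natural transformations between different $L$'s, and that after transporting along the equivalence $L\cong(Ly)^{*}$ the morphisms in $\fact(\gamma,y)$ correspond exactly to natural transformations $\delta\to\delta'$ over $\gamma$ — this uses faithfulness of $(\bullet)^{*}$ from Fact~\ref{enriched-kan}(2). The enriched aspects themselves ($\cal V$-naturality, tensors) cause no real trouble beyond what Lemma~\ref{enriched-kan-left-quillen} and Fact~\ref{uni-model-v-cat} already package; the genuinely delicate part is the purely model-categorical contraction, which is exactly the heart of Dugger's original proof.
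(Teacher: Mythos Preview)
Your approach is essentially the paper's: reduce $\fact(\gamma,y)$ to the category $\cofrep(\gamma)$ of cofibrant replacements of $\gamma$ via the equivalence of Fact~\ref{enriched-kan}, then contract the latter using a functorial cofibrant replacement $Q$ on ${\cal M}_0$. The paper's contracting zig-zag is simply $\delta \leftarrow Q\delta \to Q\gamma$ (both arrows are already morphisms in $\cofrep(\gamma)$, natural in $\delta$, by functoriality of $Q$), so the mapping cylinders and extra factorizations you worry about in the last step are unnecessary.
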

Notice that in a homotopical context it is unreasonable to expect the
category of choices to be a groupoid (``uniqueness up to unique
isomorphism'') and contractibility is usually the right thing to ask
of this category. 

Let $\cofrep(\gamma)$ be the category of cofibrant replacements of
$\gamma$. Its objects are functors $\gamma':{\cal C}\to{\cal M}$
together with a natural transformation $\gamma'\to\gamma$ which is
objectwise a weak equivalence and such that the image of $\gamma'$ is
cofibrant. The morphisms are the obvious ones.

\begin{lem}
  Assume that the unit in ${\cal V}$ is cofibrant. There is a
  canonical equivalence of categories
  $\fact(\gamma,y)\simeq\cofrep(\gamma)$.
\end{lem}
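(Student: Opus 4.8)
The plan is to exhibit mutually inverse (up to natural isomorphism) functors between $\fact(\gamma,y)$ and $\cofrep(\gamma)$, using Fact~\ref{enriched-kan} as the main bridge. In one direction, given a factorization $(L,\eta)\in\fact(\gamma,y)$, I would send it to the composite $Ly:{\cal C}\to{\cal M}$ together with the natural transformation $\eta:Ly\to\gamma$. I need to check this lands in $\cofrep(\gamma)$: that $\eta$ is objectwise a weak equivalence is part of the data, so the only thing to verify is that $Ly$ has cofibrant image. This is where the hypothesis that the unit $\one\in{\cal V}$ is cofibrant enters: each representable $y(c)={\cal C}[{\cal V}](\bullet,c)=\coprod_{{\cal C}(\bullet,c)}\one$ is a cofibrant object of $(\UV{\cal C})_0$ (a coproduct of copies of the cofibrant generator in the projective model structure), hence $Ly(c)$ is cofibrant since $L$ is left Quillen. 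On morphisms this assignment is essentially the identity, so functoriality is immediate.

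In the other direction, given a cofibrant replacement $\gamma'\to\gamma$ in $\cofrep(\gamma)$, I would apply Fact~\ref{enriched-kan}(1) to $\gamma':{\cal C}\to{\cal M}$ to obtain the ${\cal V}$-adjunction $((\gamma')^{*},(\gamma')_{*}):\UV{\cal C}\to{\cal M}$, and use Fact~\ref{enriched-kan}(3) to get the canonical isomorphism $(\gamma')^{*}y\cong\gamma'$, which composed with $\gamma'\to\gamma$ gives the required natural transformation $\eta':(\gamma')^{*}y\to\gamma$, objectwise a weak equivalence. The key point to check is that $(\gamma')^{*}$ is a left Quillen ${\cal V}$-functor: by Lemma~\ref{enriched-kan-left-quillen} this reduces to checking that $\bullet\odot\gamma'(c)$ is left Quillen for each $c$, which holds because $\gamma'(c)$ is cofibrant and ${\cal M}$ is a model ${\cal V}$-category (so the tensor is a Quillen adjunction of two variables). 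For functoriality in $\gamma'$, a morphism $\gamma'\to\gamma''$ in $\cofrep(\gamma)$ induces, via the ${\cal V}$-naturality/pseudo-functoriality of the left Kan extension construction $\gamma\mapsto\gamma^{*}$ recalled after Fact~\ref{enriched-kan}, a natural transformation $(\gamma')^{*}\to(\gamma'')^{*}$ compatible with the $\eta$'s.

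It then remains to see these two functors are quasi-inverse. Starting from $(L,\eta)$, passing to $(Ly,\eta)$ and back produces $((Ly)^{*},\text{(induced map)})$; since $L$ is a left Quillen ${\cal V}$-functor, in particular cocontinuous, Fact~\ref{enriched-kan}(2) (the equivalence $[{\cal C},{\cal M}]\simeq[\UV{\cal C},{\cal M}]_{\mathrm{coc}}$) gives a canonical isomorphism $(Ly)^{*}\cong L$ compatible with the counit data, so this is naturally isomorphic to $(L,\eta)$. Conversely, starting from $\gamma'\to\gamma$, passing to $((\gamma')^{*},\eta')$ and back to $(\gamma')^{*}y\cong\gamma'$ recovers the original object up to the canonical isomorphism of Fact~\ref{enriched-kan}(3). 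I expect the main obstacle to be bookkeeping: making sure all these canonical isomorphisms are genuinely natural and mutually compatible (i.e.\ that the comparison $(Ly)^{*}\cong L$ intertwines the counit transformations correctly), rather than any substantive difficulty — the real content is entirely supplied by Fact~\ref{enriched-kan} together with the cofibrancy bookkeeping that requires $\one$ cofibrant.
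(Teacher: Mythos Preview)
Your proposal is correct and follows essentially the same approach as the paper: construct functors in both directions using Fact~\ref{enriched-kan} (with $(\gamma')^{*}$ in one direction and precomposition with $y$ in the other), and appeal to the equivalence in Fact~\ref{enriched-kan}(2) to see they are quasi-inverse. You supply more detail than the paper does (e.g.\ explicitly invoking Lemma~\ref{enriched-kan-left-quillen} to check that $(\gamma')^{*}$ is left Quillen, and spelling out why representables are cofibrant when $\one$ is), but the structure of the argument is the same.
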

\begin{proof}
  We give functors in both directions. That these are quasi-inverses
  to each other will then be seen to follow from the ${\cal
    V}$-equivalence of categories in Fact~\ref{enriched-kan}.
  \begin{itemize}
  \item Given $\gamma'\to\gamma$ on the right hand side, define
    $L=(\gamma')^{*}$ and choose the natural transformation
    $(\gamma')^{*}y\cong \gamma'\to\gamma$. Functoriality follows from
    the functoriality statement in Fact~\ref{enriched-kan}.
  \item Given $(L,Ly\to\gamma)$ on the left hand side, $Ly\to\gamma$
    defines a cofibrant replacement since $L$ is a left Quillen ${\cal
      V}$-functor and the image of $y$ is
    cofibrant. Functoriality is obvious.\qedhere{}
  \end{itemize}
\end{proof}
\begin{proof}[Proof of Proposition~\ref{pro:universality}]
  By the previous lemma, we need to show contractibility of
  $\cofrep(\gamma)$. Fix a cofibrant replacement functor $F$ for the
  model structure on ${\cal M}_{0}$. Composing with $\gamma$ we obtain
  an object $(F\gamma,F\gamma\to\gamma)$ of $\cofrep(\gamma)$. Given
  any other object $(\gamma',\gamma'\to\gamma)$, functoriality of $F$
  yields a commutative square
  \begin{equation*}
    \xymatrix{F\gamma'\ar[r]\ar[d]&F\gamma\ar[d]\\
      \gamma'\ar[r]&\gamma}
  \end{equation*}
  and thus a zig-zag $\gamma'\leftarrow F\gamma'\to F\gamma$ in
  $\cofrep(\gamma)$. Moreover, this zig-zag is natural in $\gamma'$
  hence this construction provides a zig-zag of homotopies between the
  identity functor on $\cofrep(\gamma)$ and the constant functor
  $(F\gamma,F\gamma\to\gamma)$.
\end{proof}

For the reader's convenience we reformulate our main result.
\begin{cor}\label{enriched-model-universal}
  Let ${\cal C}$ be a small category, and ${\cal V}$ a cofibrantly
  generated symmetric monoidal model category whose unit is
  cofibrant. There exists a functor $y:{\cal C}\to \UV{\cal C}$ into a
  model ${\cal V}$-category, universal in the sense that for any solid
  diagram
  \begin{equation*}
    \xymatrix{{\cal C}\ar[rd]_{\gamma}\ar[r]^{y}&\UV{\cal
        C}\ar@{.>}[d]^{L}\\
      &{\cal M}}
  \end{equation*}
  with ${\cal M}$ a model ${\cal V}$-category, there exists a left
  Quillen ${\cal V}$-functor $L$ as indicated by the dotted arrow,
  unique up to a contractible choice, making the diagram commutative
  up to a weak equivalence $Ly\to\gamma$.
\end{cor}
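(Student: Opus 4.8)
The plan is to derive Corollary~\ref{enriched-model-universal} directly from Proposition~\ref{pro:universality} by unwinding what contractibility of $\fact(\gamma,y)$ gives us. Since a contractible category is in particular nonempty and connected, the first thing I would extract is the existence of \emph{some} object $(L,\eta)$ in $\fact(\gamma,y)$: this is exactly a left Quillen ${\cal V}$-functor $L:\UV{\cal C}\to{\cal M}$ together with a natural weak equivalence $\eta:Ly\to\gamma$, which is the content of the displayed diagram commuting ``up to a weak equivalence''. So the existence half of the corollary is immediate.

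For the uniqueness half, the point is to interpret the phrase ``unique up to a contractible choice'' as precisely the assertion that the category $\fact(\gamma,y)$ of all such factorizations is contractible — which is what Proposition~\ref{pro:universality} literally says. So I would simply note that the corollary is a restatement: the universal property asked for is, by definition of $\fact(\gamma,y)$, that this category be contractible, and that is what was proved. Concretely I would write: ``Take $y:{\cal C}\to\UV{\cal C}$ to be the Yoneda embedding, which by Fact~\ref{uni-model-v-cat} lands in a model ${\cal V}$-category (here we use that the unit of ${\cal V}$ is cofibrant). Given $\gamma$, Proposition~\ref{pro:universality} asserts that $\fact(\gamma,y)$ is contractible; in particular it is nonempty, yielding $(L,\eta)$ as in the diagram, and the contractibility is exactly the meaning of `$L$ unique up to a contractible choice'.''

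Since this is so short, I would also spend a sentence or two making the translation fully explicit for the reader who has not memorised the definition of $\fact(\gamma,y)$: an object there is a pair $(L,\eta)$ with $L$ left Quillen ${\cal V}$ and $\eta:Ly\to\gamma$ objectwise a weak equivalence, and a morphism is a natural transformation of the $L$'s compatible with the $\eta$'s; so ``the solid diagram can be completed, and the space of completions is contractible'' is word-for-word ``$\fact(\gamma,y)$ is nonempty and contractible''.

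There is essentially no obstacle here — the corollary is a reformulation, not a new result, and the only thing to be careful about is that the hypothesis (unit of ${\cal V}$ cofibrant) is the one needed both for Fact~\ref{uni-model-v-cat} (so that $\UV{\cal C}$ really is a model ${\cal V}$-category) and for Proposition~\ref{pro:universality} (via the Lemma identifying $\fact(\gamma,y)\simeq\cofrep(\gamma)$). If one wanted to say a word more, the only mildly non-formal input being invoked is that $\fact(\gamma,y)\simeq\cofrep(\gamma)$ together with the fact that $\cofrep(\gamma)$ is contractible because any functorial cofibrant replacement $F$ on ${\cal M}_0$ gives a natural zig-zag $\gamma'\leftarrow F\gamma'\to F\gamma$ contracting it onto $F\gamma$; but all of that already lives in the preceding two proofs, so the corollary's proof is just: ``Immediate from Proposition~\ref{pro:universality} and the definition of $\fact(\gamma,y)$.''
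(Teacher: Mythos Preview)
Your proposal is correct and matches the paper's approach exactly: the paper presents this corollary explicitly as a reformulation of Proposition~\ref{pro:universality} (``For the reader's convenience we reformulate our main result'') and gives no separate proof. One small inaccuracy worth noting: the first clause of Fact~\ref{uni-model-v-cat} does not require the unit of ${\cal V}$ to be cofibrant---that hypothesis enters only through Proposition~\ref{pro:universality} (via the lemma identifying $\fact(\gamma,y)\simeq\cofrep(\gamma)$, where cofibrancy of representables $y(c)$ uses that $\one$ is cofibrant).
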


\begin{rem}
  One can dualize the discussion of this section in order to obtain
  universal model ${\cal V}$-categories for \emph{right} Quillen
  ${\cal V}$-functors, as
  in~\cite[§4]{dugger-universal-hty}. Unsurprisingly, one finds that
  this universal model ${\cal V}$-category associated to ${\cal C}$ is
  given by $[{\cal C},{\cal V}]^{\mathrm{op}}$ with the opposite of
  the projective model structure. This can also be deduced from
  Corollary~\ref{enriched-model-universal} applied to ${\cal
    C}^{\mathrm{op}}$.
\end{rem}

\section{Universal model dg categories}
\label{sec:uni-dg}
We now specialize to the case of dg categories. Fix a
commutative unital ring $\Lambda$, denote by $\Mod{\Lambda}$ the
category of $\Lambda$-modules, and by $\ch(\Lambda)$ the category of
unbounded chain complexes of $\Lambda$-modules. Our conventions for
chain complexes are homological, \ie{} the differentials decrease the
indices, and the shift operator satisfies $(A[p])_{n}=A_{p+n}$. The
subobject of $n$-cycles (resp.\ $n$-boundaries) of $A$ is denoted by
$Z_{n}A$ (resp.\ $B_{n}A$). As usual, the $n$th homology is denoted by
$\h_{n}A=Z_{n}A/B_{n}A$.

$\ch(\Lambda)$ has a tensor product, defined by
\begin{equation*}
  (A\otimes B)_{n}=\oplus_{p+q=n}A_{p}\otimes B_{q}
\end{equation*}
with the Koszul sign convention for the differential. It also admits
the ``projective model structure'' for which the weak equivalences are
the quasi-isomorphisms, and the fibrations the epimorphisms (\ie{} the
degreewise surjections). In that way, $\ch(\Lambda)$ becomes a
symmetric monoidal model category. In this section we always take
${\cal V}$ to be $\ch(\Lambda)$. The universal model category
underlying a model dg category $(\Udg{\cal C})_{0}$ will
now be denoted by $\U{\cal C}$. The complex of morphisms from $K$ to
$K'$ in $\Udg{\cal C}$ is denoted by
$\dghom(K,K')\in\ch(\Lambda)$. Recall that it is given explicitly by
$\totp(\hom_{\psh({\cal C},\Lambda)}(K_{p},K'_{q}))_{p,q}$.

Our main goal in this section is to better understand the model
structure on $\U{\cal C}$ (defined in Fact~\ref{functorcat-model}). In
the last part we will also discuss a specific instance of a left
dg Kan extension used in~\cite{choudhury-gallauer:iso-galois}.

\subsection{Basic properties of the model category \texorpdfstring{$\U{\cal C}$}{UC}}

By Fact~\ref{uni-model-v-cat} we know that $\Udg{\cal C}$ is a model
dg category, and a (symmetric) monoidal model
dg category if ${\cal C}$ is (symmetric) monoidal. It
follows from Fact~\ref{functorcat-model} that the model category
$\U{\cal C}$ is about as nice as it can get.
\begin{cor}\label{dg-model-prop}
$\U{\cal C}$ is a
  \begin{enumerate}
  \item proper,
  \item stable,
  \item tractable (in particular combinatorial),
  \item cellular,
  \end{enumerate}
  model category.
\end{cor}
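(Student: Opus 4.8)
The plan is to reduce each of the four assertions to the corresponding property of the enriching category ${\cal V}_{0}=\ch(\Lambda)$ and then invoke the classical facts about the projective model structure on complexes. By construction $\U{\cal C}=(\UV{\cal C})_{0}$, so Fact~\ref{functorcat-model} immediately gives properness from properness of $\ch(\Lambda)$ (part~(2)), tractability---hence combinatoriality---and cellularity from the same for $\ch(\Lambda)$ (part~(3)), and stability from stability of $\ch(\Lambda)$ (part~(4)). So the corollary reduces to the statement that $\ch(\Lambda)$ with its projective model structure is proper, stable, tractable and cellular, which I would establish as follows.

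For tractability I recall the standard generators: the generating cofibrations are the inclusions $S^{n-1}\hookrightarrow D^{n}$, and the generating acyclic cofibrations the maps $0\to D^{n}$, where $D^{n}$ is the contractible complex $\Lambda\xrightarrow{\id}\Lambda$ in degrees $n,n-1$ and $S^{n-1}=\Lambda$ placed in degree $n-1$. Their sources $S^{n-1}$ and $0$ are cofibrant, so the model structure is tractable; since $\ch(\Lambda)$ is Grothendieck abelian, hence locally presentable, it is in particular combinatorial. For properness, every object of $\ch(\Lambda)$ is fibrant because the fibrations are the degreewise surjections, which gives right properness; left properness is classical, the point being that cofibrations are degreewise monomorphisms, so that the long exact homology sequence forces a pushout of a quasi-isomorphism along a cofibration to be a quasi-isomorphism (cf.~\cite{hovey-modelcategories}). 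For stability, the shift $A\mapsto A[1]$ is an exact auto-equivalence, inverse to $A\mapsto A[-1]$, which permutes the generating (acyclic) cofibrations and is therefore a Quillen auto-equivalence; since it models the suspension functor, $\ch(\Lambda)$ is stable.

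It remains to check cellularity of $\ch(\Lambda)$, \ie{} the three conditions of \cite[Def.~12.1.1]{hirschhorn:model-cat-loc}: the complexes $S^{n-1}$, $D^{n}$ and $0$ are finitely presentable, hence compact and small relative to any set of maps; and cofibrations of complexes are effective monomorphisms, being monomorphisms in the abelian category $\ch(\Lambda)$, where every monomorphism is the kernel of its cokernel. I do not anticipate any genuine obstacle; the only steps requiring more than a line are left properness of $\ch(\Lambda)$ and this last effective-monomorphism point, both of which are entirely standard---indeed, this is precisely the sense in which, as the surrounding text says, the model category $\U{\cal C}$ is ``about as nice as it can get''.
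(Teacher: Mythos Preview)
Your proposal is correct and follows exactly the paper's approach: the paper states this as an immediate corollary of Fact~\ref{functorcat-model} applied with ${\cal V}_{0}=\ch(\Lambda)$, without spelling out a separate proof. The additional verification you supply for the four properties of $\ch(\Lambda)$ is standard and accurate, though the paper simply takes these for granted.
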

We will now describe explicitly sets of generating (trivial)
cofibrations.

\begin{dfi} \label{generators} Let, for any presheaf $F$, $S^nF$ be
  the complex of presheaves which has $F$ in degree $n$ and is $0$
  otherwise, and let $D^nF$ be the complex of presheaves which has $F$
  in degree $n$ and $n-1$, is $0$ otherwise, and whose nontrivial
  differential is given by the identity on $F$. There exists a
  canonical morphism $S^{n-1}F \to D^nF$. Let $I$ be the set of
  morphisms $S^{n-1} \Lambda(c) \to D^{n} \Lambda(c)$ for all $c \in
  {\cal C}$ and let $J$ be the set of maps $0 \to D^n \Lambda(c)$.
\end{dfi}

Notice that there are adjunctions
\begin{equation*}
(S^{n},Z_{n})\text{ and }  (D^n,(\bullet)_{n}):\psh({\cal C}, \Lambda) \to
  \U{\cal C}.
\end{equation*}
The same arguments as in~\cite[Pro.~2.3.4,
2.3.5]{hovey-modelcategories} then establish the following result.

\begin{fac} \label{I} A morphism in $\U{\cal C}$ is a fibration
  (resp. trivial fibration) if and only if it has the right lifting
  property with respect to $J$ (resp. $I$).
\end{fac}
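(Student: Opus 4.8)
The plan is to follow the standard recognition-of-fibrations argument, adapted via the adjunctions just recorded. First I would reduce everything to $\psh({\cal C},\Lambda)$ through the two adjunctions $(S^n,Z_n)$ and $(D^n,(\bullet)_n)$: a morphism $f\colon K\to K'$ in $\U{\cal C}$ has the right lifting property with respect to $0\to D^n\Lambda(c)$ if and only if, by adjunction, the map $K_n(c)\to K'_n(c)$ is an epimorphism of $\Lambda$-modules, i.e.\ $f$ is degreewise surjective on each representable; since epimorphisms of presheaves are detected objectwise and every object of $\psh({\cal C},\Lambda)$ is a colimit of representables, this is the same as $f$ being an epimorphism in $\U{\cal C}$, which is exactly the definition of a fibration in the projective model structure (Fact~\ref{functorcat-model}). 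For the trivial-fibration statement, the right lifting property against $S^{n-1}\Lambda(c)\to D^n\Lambda(c)$ translates, again by adjunction and the explicit description of $S^n$ and $D^n$, into the condition that for every $n$ and every $c$ the square testing solvability of lifting problems
\begin{equation*}
  \xymatrix{Z_{n-1}K(c)\ar[r]\ar[d]&K_n(c)\ar[d]\\ Z_{n-1}K'(c)\ar[r]&K'_n(c)}
\end{equation*}
admits the required diagonal; unwinding this is precisely the classical statement (see~\cite[Pro.~2.3.5]{hovey-modelcategories}) that $f$ is a degreewise surjection with acyclic kernel, hence an objectwise trivial fibration of complexes, i.e.\ a trivial fibration in $\U{\cal C}$.

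Concretely I would proceed in three steps. Step one: record the adjunction isomorphisms $\hom_{\U{\cal C}}(D^n\Lambda(c),K)\cong K_n(c)$ and $\hom_{\U{\cal C}}(S^n\Lambda(c),K)\cong Z_nK(c)$, and the fact that under these the canonical map $S^{n-1}\Lambda(c)\to D^n\Lambda(c)$ corresponds to the inclusion $Z_{n-1}K(c)\hookrightarrow K_{n-1}(c)$ composed appropriately with the differential — this is where one must be careful with the homological indexing conventions fixed at the start of §\ref{sec:uni-dg} ($(A[p])_n=A_{p+n}$, differentials lowering degree). Step two: deduce that $J$-injectivity is objectwise degreewise surjectivity, hence (objectwise fibration in $\ch(\Lambda)$). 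Step three: deduce that $I$-injectivity is objectwise (degreewise surjectivity $+$ quasi-isomorphism), hence objectwise trivial fibration. In both cases one then invokes that weak equivalences and fibrations in $\U{\cal C}$ are defined objectwise (Fact~\ref{functorcat-model}) together with the fact that objectwise statements in $\psh({\cal C},\Lambda)$ are detected on representables, because a presheaf morphism is epi (resp.\ iso) iff it is so on every object $c\in{\cal C}$, and $\Lambda(c)$ corepresents evaluation at $c$.

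The only real content beyond bookkeeping is the classical lifting-property computation of~\cite[Pro.~2.3.4, 2.3.5]{hovey-modelcategories}, which identifies $I$-injective and $J$-injective maps of complexes; the paper explicitly says "the same arguments" apply, so I would simply cite it rather than reproduce it. The main obstacle — really just a point requiring care rather than a genuine difficulty — is making the transition from "objectwise in $\U{\cal C}$" to "injectivity against $I$ and $J$" watertight: one must check that testing the lifting property against the single morphism $S^{n-1}\Lambda(c)\to D^n\Lambda(c)$ for each $c$ genuinely captures the objectwise condition at $c$, which rests on the compatibility of the evaluation-at-$c$ functor with the functors $S^n$, $D^n$, $Z_n$, $(\bullet)_n$ and on $\Lambda(c)$ being a projective generator in the appropriate sense. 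Once that compatibility is spelled out, the result follows formally, and the proof reduces to the reference plus these adjunction identifications.
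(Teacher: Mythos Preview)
Your approach is exactly the paper's: invoke the adjunctions $(S^{n},Z_{n})$ and $(D^{n},(\bullet)_{n})$ together with Yoneda to reduce to the objectwise statement, then cite \cite[Pro.~2.3.4, 2.3.5]{hovey-modelcategories}. The only blemish is that the arrows in your displayed square point the wrong way (there is no natural map $Z_{n-1}K(c)\to K_{n}(c)$; the relevant map is the differential $K_{n}(c)\to Z_{n-1}K(c)$, and $I$-injectivity is surjectivity of $K_{n}(c)\to K'_{n}(c)\times_{Z_{n-1}K'(c)}Z_{n-1}K(c)$), but since you defer to Hovey for the actual computation this does not affect the argument.
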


We will use another set of generating cofibrations later on.
\begin{dfi}
  Given a presheaf $F$ of $\Lambda$-modules, let $\simplex^n F$
  be the complex which has $F$ in degree $n$ and $F \oplus F$ in
  degree $n-1$, and zero otherwise, and whose only non-zero
  differential is given by $\id\times(-\id) : F \to F \oplus F$. Define
  also $\partial\simplex^n F$ to be the complex which has $F
  \oplus F$ in degree $n-1$ and $0$ otherwise. Let $I'$ be the set of
  morphisms $\partial\simplex^n \Lambda(c) \to \simplex^{n}
  \Lambda(c)$ which is the identity in degree $n$, for all $n \in \Z$
  and $c \in {\cal C}$ .
\end{dfi}

\begin{lem} \label{I'} A morphism in $\U{\cal C}$ is a trivial
  fibration if and only if it has the right lifting property with
  respect to $I'$.
\end{lem}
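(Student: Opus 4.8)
The plan is to compare the set $I'$ with the set $I$ of Definition~\ref{generators}, for which the analogous characterization of trivial fibrations is already known (Fact~\ref{I}). The key observation is that both $\simplex^n F$ and $D^nF$ are contractible complexes, and that the maps $\partial\simplex^n\Lambda(c)\to\simplex^n\Lambda(c)$ and $S^{n-1}\Lambda(c)\to D^n\Lambda(c)$ are, up to the objectwise splitting of short exact sequences, built from the same data. More precisely, I would first record that $\simplex^nF$ is isomorphic to the mapping cone of $\id_F\colon S^{n-1}F\to S^{n-1}F$, and observe that there is a pushout square in $\U{\cal C}$ expressing $\partial\simplex^n\Lambda(c)\to\simplex^n\Lambda(c)$ in terms of the generating cofibrations $S^{n-1}\Lambda(c)\to D^n\Lambda(c)$ and $S^{n-2}\Lambda(c)\to D^{n-1}\Lambda(c)$. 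Indeed, $\partial\simplex^nF = S^{n-1}F\oplus S^{n-1}F$ sits inside $\simplex^nF$, and the quotient $\simplex^nF/\partial\simplex^nF$ is $S^nF$; likewise $D^nF/S^{n-1}F\cong S^{n-1}F$. This lets me exhibit each map in $I'$ as a (finite) composite and retract of maps obtained from $I$ by the usual cell-complex operations, so that $I'$-cofibrations and $I$-cofibrations generate the same weakly saturated class.

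Concretely, I expect to argue as follows. First, every map in $I'$ is a cofibration in $\U{\cal C}$: since $\partial\simplex^n\Lambda(c)\to\simplex^n\Lambda(c)$ is a degreewise-split monomorphism of complexes of presheaves with cokernel $S^n\Lambda(c)$, which is a cofibrant object, and since it is moreover a quasi-isomorphism (both source and target are contractible—the target because it is a cone, the source because $\partial\simplex^nF$ has trivial differential but is actually acyclic: $F\oplus F$ in a single degree is not acyclic, so here I must be careful and instead note that the relevant statement is that the map has the left lifting property against trivial fibrations, which I verify directly using the splitting). So $I'$-cofibrations are cofibrations, whence every trivial fibration has the right lifting property against $I'$. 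For the converse, suppose $p\colon X\to Y$ has the right lifting property against $I'$; I want to deduce the right lifting property against $I$, and then conclude by Fact~\ref{I}. Using the adjunctions $(S^n,Z_n)$ and $(D^n,(\bullet)_n)$ together with the explicit description of $\partial\simplex^n\Lambda(c)$ and $\simplex^n\Lambda(c)$, lifting against $I'$ translates into a surjectivity-plus-exactness condition on the complexes $X(c)$, $Y(c)$ — precisely the condition that $p(c)\colon X(c)\to Y(c)$ is a surjective quasi-isomorphism of complexes of $\Lambda$-modules, which in turn is equivalent to $p$ having the right lifting property against $I$.

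The main obstacle I anticipate is bookkeeping rather than conceptual: one has to carefully unwind what "right lifting property against the map $\partial\simplex^n\Lambda(c)\to\simplex^n\Lambda(c)$ that is the identity in degree $n$" says, via the two adjunctions, in terms of the maps $Z_{n-1}X(c)\to Z_{n-1}Y(c)$ and $X(c)_n\to Y(c)_n$ and the differential. The cleanest route is probably to show directly that the weakly saturated class generated by $I'$ equals that generated by $I$ — i.e.\ that each generator of one is a retract of a transfinite composite of pushouts of generators of the other (here finite composites suffice since everything is built degreewise from $\Lambda(c)$ in two adjacent degrees) — and then invoke that a map has the right lifting property against a set $K$ if and only if it does against the saturation of $K$. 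I would present the $I'$-to-$I$ direction via the pushout square mentioned above and the $I$-to-$I'$ direction by a symmetric argument, leaving the elementary diagram chases to the reader in the same spirit as the proof of Fact~\ref{I}.
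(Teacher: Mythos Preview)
Your overall strategy—to show that $I$ and $I'$ generate the same weakly saturated class, so that having the right lifting property against one is equivalent to having it against the other—is sound and can be made into a proof. A few of the details you sketch are off, however. The complex $\simplex^n F$ is \emph{not} the mapping cone of $\id_{S^{n-1}F}$; that cone is $D^nF$. The generator $S^{n-2}\Lambda(c)\to D^{n-1}\Lambda(c)$ plays no role: in fact $\partial\simplex^n\Lambda(c)\to\simplex^n\Lambda(c)$ is already a single pushout of $S^{n-1}\Lambda(c)\to D^n\Lambda(c)$ along the antidiagonal $S^{n-1}\Lambda(c)\to\partial\simplex^n\Lambda(c)$, and conversely $S^{n-1}\Lambda(c)\to D^n\Lambda(c)$ is a pushout of $\partial\simplex^n\Lambda(c)\to\simplex^n\Lambda(c)$ along a projection $\partial\simplex^n\Lambda(c)\to S^{n-1}\Lambda(c)$. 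With these corrections your plan goes through; the direct adjunction computation you outline as an alternative also works after a short diagram chase.

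The paper's route is shorter. For the forward direction it invokes Fact~\ref{cof}: each map in $I'$ is a degreewise split monomorphism with cokernel $S^n\Lambda(c)$, hence a cofibration—this is what you arrive at after your self-correction about acyclicity. For the converse, rather than building pushouts or unwinding adjunctions, the paper simply writes down a single commutative diagram exhibiting $S^n\Lambda(c)\to D^{n+1}\Lambda(c)$ as a retract of $\partial\simplex^{n+1}\Lambda(c)\to\simplex^{n+1}\Lambda(c)$. Since the right lifting property is inherited under retracts, this immediately yields the missing implication. Your approach proves slightly more (equality of the saturated classes, not merely of the lifting conditions), but the paper's explicit retract is considerably more economical.
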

\begin{proof}
  Morphisms in $I'$ are cofibrations by Fact~\ref{cof}. Conversely we
  will exhibit any morphism in $I$ as a retract of some morphism in
  $I'$. Thus fix $c\in{\cal C}$ and $n\in\Z$, and consider the
  following diagram:
  \begin{equation*}
    \xymatrix{\ S^n \Lambda(c) \ar[r]^{\id\times(-\id)} \ar[d] &\partial\simplex^{n+1} \Lambda(c) \ar[r]^{(\id,0)} 
      \ar[d] & S^n \Lambda(c) \ar[d] \\
      D^{n+1} \Lambda(c) \ar[r]^{r} &\simplex^{n+1} \Lambda(c)
      \ar[r]^s & D^{n+1} \Lambda(c)}
  \end{equation*}
  Here, $r$ in degree $n$ is $\id\times (-\id)$ and in degree $n+1$ is
  $\id$, while $s$ in degree $n$ is the first projection and in degree
  $n+1$ the identity. It is easy to see that the diagram commutes and
  the compositions of each row are the identity morphism.
\end{proof}

\subsection{Projective cofibrations}
Since the fibrations and weak equivalences are given explicitly in
$\U{\cal C}$ our goal is to better understand the cofibrations. They
are called projective cofibrations. The discussion runs parallel to
the description of projective cofibrations for the category of chain
complexes (\ie{} the case of ${\cal C}$ the terminal category),
in~\cite[§2.3]{hovey-modelcategories}.

\begin{lem} \label{t-f-ker} If $f : K \to K' \in \U{\cal C}$ is a
  trivial fibration then $f$ induces a surjective morphism $f :
  Z_{n}K\to Z_{n}K'$ for all $n\in\Z$.
\end{lem}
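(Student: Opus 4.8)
The plan is to exploit the characterization of trivial fibrations from Fact~\ref{I} (equivalently Lemma~\ref{I'}): $f$ is a trivial fibration iff it has the right lifting property with respect to the set $I$ of morphisms $S^{n-1}\Lambda(c)\to D^n\Lambda(c)$, $c\in{\cal C}$. Since the adjunction $(S^n, Z_n):\psh({\cal C},\Lambda)\to\U{\cal C}$ identifies $\U{\cal C}(S^n\Lambda(c),K)$ with $Z_n K(c)$, a lifting problem for $S^{n}\Lambda(c)\to D^{n+1}\Lambda(c)$ against $f$ translates, on the source side, into giving a cycle $z'\in Z_n K'(c)$ together with a lift of it through $f$ to an element of $Z_n K(c)$. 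So surjectivity of $Z_n f$ evaluated at $c$ is essentially \emph{equivalent} to the lifting property against that particular subset of $I$; the real content is just to unwind the adjunctions correctly.

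Concretely, first I would fix $c\in{\cal C}$ and $n\in\Z$, and a cycle $z'\in Z_n K'(c)$. Using the adjunction $(S^n,Z_n)$, this $z'$ corresponds to a morphism $\alpha':S^n\Lambda(c)\to K'$ in $\U{\cal C}$. Next, consider the canonical inclusion $\iota:S^n\Lambda(c)\to D^{n+1}\Lambda(c)$ (the morphism from Definition~\ref{generators}, with $n$ replaced by $n+1$): this is a generating cofibration in $I$. Since $D^{n+1}\Lambda(c)$ is acyclic and $K'\to 0$ is trivially... no — rather, one needs a map $D^{n+1}\Lambda(c)\to K'$ extending $\alpha'$. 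But $D^{n+1}\Lambda(c)$ is the cone on $\id_{\Lambda(c)}$ shifted, so it is contractible, and $\U{\cal C}(D^{n+1}\Lambda(c),K')\cong K'(c)_{n+1}$ via the adjunction $(D^{n+1},(\bullet)_{n+1})$; under this, the composite with $\iota$ picks out the boundary $\partial$ of the chosen element of $K'(c)_{n+1}$. Since $z'$ is a \emph{cycle}, i.e.\ $\partial z' = 0$, it need not itself be a boundary — so $\alpha'$ typically does \emph{not} extend over $\iota$.

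Hence one must assemble a genuine lifting square rather than a mere extension. I would form the commutative square with top edge $\alpha':S^n\Lambda(c)\to K'$... the cleanest route is instead: since $z'\in Z_n K'(c)$ and $f$ is a trivial fibration, build the lifting problem whose left vertical map is $0\to S^n\Lambda(c)$? No — that does not see $f$. The correct formulation: a trivial fibration has the RLP against all cofibrations, in particular against $0\to S^n\Lambda(c)$ is wrong since $0\to S^n\Lambda(c)$ is not a cofibration for $n$ the only nonzero degree (it is, actually, being a pushout... let me not belabor this). The robust choice is: present $z'$ by $\alpha':S^n\Lambda(c)\to K'$, and solve the lifting problem
\begin{equation*}
  \xymatrix{
    0 \ar[r]\ar[d] & K\ar[d]^f\\
    S^n\Lambda(c)\ar[r]^{\alpha'}\ar@{.>}[ur] & K'
  }
\end{equation*}
which requires $0\to S^n\Lambda(c)$ to be a cofibration; and indeed $S^n\Lambda(c)$ is cofibrant because it is the image under the left Quillen functor $S^n$ of the cofibrant object $\Lambda(c)$ (representables are cofibrant, cf.\ Lemma~\ref{enriched-kan-left-quillen} and the remark following it, applied with $\gamma=y$). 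The lift is a morphism $S^n\Lambda(c)\to K$, i.e.\ an element $z\in Z_n K(c)$ with $f(z)=z'$. Running this for every $c$ and checking naturality in $c$ (which is automatic since the lift is a morphism of presheaves of complexes) gives the surjection $Z_n f$. The only mild subtlety — the step I'd watch — is confirming that $S^n\Lambda(c)$ is cofibrant; but this is immediate from $\Lambda(c)=y(c)$ being cofibrant together with $S^n$ being left Quillen (the pair $(S^n,Z_n)$ is a Quillen adjunction since $Z_n$ preserves epimorphisms and quasi-isomorphisms of fibrant-enough... more simply, $S^n$ sends the generating (trivial) cofibrations of $\psh({\cal C},\Lambda)$ to (trivial) cofibrations in $\U{\cal C}$ by inspection of Definition~\ref{generators}).
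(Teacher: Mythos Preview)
Your lifting strategy is sound in outline, but the justification you give for the key step --- that $S^n\Lambda(c)$ is cofibrant --- is circular as written. You argue that $(S^n, Z_n)$ is a Quillen adjunction because $Z_n$ preserves trivial fibrations; but $Z_n$ preserving trivial fibrations is \emph{precisely} the statement of Lemma~\ref{t-f-ker} you are trying to prove. (And $Z_n$ does not preserve arbitrary fibrations: the projection $D^{n+1}\Lambda(c) \to S^{n+1}\Lambda(c)$ is a degreewise epimorphism, yet applying $Z_{n+1}$ yields $0 \to \Lambda(c)$.) The alternative you sketch, via ``generating cofibrations of $\psh({\cal C},\Lambda)$'', is not available either, since no model structure on $\psh({\cal C},\Lambda)$ has been fixed.

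There is an easy non-circular fix: observe directly that $0 \to S^n\Lambda(c)$ is the pushout of the generating cofibration $S^{n-1}\Lambda(c) \to D^n\Lambda(c)\in I$ along the unique map $S^{n-1}\Lambda(c) \to 0$, hence is itself a cofibration. With this in place your lifting argument goes through. That said, the paper's proof avoids model-categorical machinery entirely and is much shorter: since $f$ is degreewise surjective it is surjective on boundaries $B_n$, and since $f$ is a quasi-isomorphism it is surjective on $\h_n$; the short exact sequence $0 \to B_n \to Z_n \to \h_n \to 0$ then forces surjectivity on $Z_n$. This elementary route is what lets the paper subsequently \emph{use} Lemma~\ref{t-f-ker} to establish cofibrancy of $S^0F$ in Lemma~\ref{projective-cofibrant} without any circularity.
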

\begin{proof}
  Since $f$ is degreewise surjective, it induces a surjective morphism
  on the boundaries $B_{n}K\to B_{n}K'$. Now consider the morphism of
  exact sequences:
  \begin{equation*}
    \xymatrix{ B_{n}K \ar[r] \ar[d] & Z_{n}K \ar[r] \ar[d] &\h_nK \ar[d]\\
      B_{n}K' \ar[r] & Z_{n}K' \ar[r] & \h_nK'}
\end{equation*}
The first and last vertical arrows are surjective, hence the middle
one is too.
\end{proof}

\begin{dfi}
  A presheaf of $\Lambda$-modules $F \in \psh({\cal C}, \Lambda)$ is
  called \emph{projective} if
  \begin{equation*}
    \hom_{\psh({\cal
        C},\Lambda)}(F,\bullet) : \psh({\cal C}, \Lambda) \to
    \Mod{\Lambda}
  \end{equation*}
  is exact.
\end{dfi}

\begin{exa}
  For any $c \in {\cal C}$ the representable presheaf $\Lambda(c)$ is
  projective. Direct sums of projectives are projective.
\end{exa}

\begin{lem}\label{projective-cofibrant}
  For any projective presheaf $F \in \psh({\cal C}, \Lambda)$, the
  complex $S^{0}F$ is projective cofibrant.
\end{lem}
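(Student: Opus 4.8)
The plan is to show that $S^0F$ has the left lifting property with respect to all trivial fibrations in $\U{\cal C}$, which by general model-categorical nonsense (together with Fact~\ref{functorcat-model}) is enough to conclude that $S^0F$ is cofibrant. So let $f : K \to K'$ be a trivial fibration and consider a map $S^0F \to K'$; by the adjunction $(S^0, Z_0)$ this is the same as a morphism of presheaves $F \to Z_0 K'$. To lift it we need to produce a morphism $F \to Z_0 K$ over $Z_0 K'$.

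By Lemma~\ref{t-f-ker}, the trivial fibration $f$ induces a surjection $Z_0 K \surj Z_0 K'$ of presheaves of $\Lambda$-modules. Since $F$ is projective, the functor $\hom_{\psh({\cal C},\Lambda)}(F,\bullet)$ is exact, in particular it sends this epimorphism of presheaves to a surjection of $\Lambda$-modules; concretely, the map $\hom(F, Z_0K) \to \hom(F, Z_0K')$ is surjective. Hence the given $F \to Z_0 K'$ lifts to some $F \to Z_0 K$, and transposing back through the adjunction gives the desired lift $S^0F \to K$ of $S^0F \to K'$ along $f$. The commutativity with the map $S^0F \to K$ coming from below (there is no "below" here since $S^0F$ has nothing mapping into it — we are lifting against $0 \to S^0F$, i.e. testing cofibrancy of the object $S^0F$, equivalently the cofibrancy of the map $0 \to S^0F$) is automatic.

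I do not anticipate a serious obstacle: the argument is a direct unwinding of the adjunction $(S^0, Z_0)$, the characterization of trivial fibrations, Lemma~\ref{t-f-ker}, and the definition of a projective presheaf. The one point requiring a little care is the precise meaning of "projective cofibrant" — namely that one should lift against the set $J$ of generating trivial cofibrations is not quite what is needed; rather cofibrancy of the object $S^0F$ means $0 \to S^0F$ is a cofibration, which by the small object argument (valid since $\U{\cal C}$ is cofibrantly generated by Fact~\ref{functorcat-model}) is detected by the left lifting property against all trivial fibrations, and that is exactly what the argument above verifies. It is also worth noting in passing that the same computation, run with $S^n F$ and the adjunction $(S^n, Z_n)$ together with the full strength of Lemma~\ref{t-f-ker}, shows more generally that $S^n F$ is projective cofibrant for every $n$, but we only record the case $n=0$ as that is what is used later.
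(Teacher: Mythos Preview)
Your proof is correct and follows essentially the same approach as the paper: both reduce cofibrancy of $S^{0}F$ to surjectivity of $\hom(S^{0}F,K)\to\hom(S^{0}F,K')$ for a trivial fibration $f:K\to K'$, identify this via the adjunction $(S^{0},Z_{0})$ with $\hom_{\psh({\cal C},\Lambda)}(F,Z_{0}K)\to\hom_{\psh({\cal C},\Lambda)}(F,Z_{0}K')$, and conclude using Lemma~\ref{t-f-ker} and projectivity of $F$. One minor remark: the fact that cofibrancy of an object is equivalent to the left lifting property of $0\to S^{0}F$ against trivial fibrations holds in any model category by definition of cofibrations---no appeal to the small object argument or cofibrant generation is needed.
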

\begin{proof}
  We have to prove that for any trivial fibration $f:K \to K' \in
  \U{\cal C}$, the induced morphism
  \begin{equation*}
    \hom_{\U{\cal C}}(S^{0}F, K) \to \hom_{\U{\cal C}}(S^{0}F,K')
  \end{equation*}
  is surjective. But for any complex $L \in \U{\cal C}$, we have
  \begin{equation*}
    \hom_{\U{\cal C}}(S^{0}F, L) = \hom_{\Mod{\Lambda}}(F, Z_{0}L).
  \end{equation*}
  Now the result follows from Lem.~\ref{t-f-ker}.
\end{proof}

\begin{fac} \label{bnd} Let $K \in \U{\cal C}$. If $K$ is projective
  cofibrant then each $K_n$ is a projective presheaf. As a partial
  converse, if $K$ is bounded below and each $K_i$ is projective then
  $K$ is projective cofibrant.
\end{fac}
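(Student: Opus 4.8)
The plan is to treat the two halves separately, in each case reducing to the analogous statement for chain complexes over a point (i.e. in $\ch(\Lambda)$, or rather in $\psh({\cal C},\Lambda)$ treated degreewise) by exploiting the explicit generating cofibrations.

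For the first assertion, suppose $K$ is projective cofibrant. Then $K$ is a retract of an $I$-cell complex, where $I=\{S^{n-1}\Lambda(c)\to D^{n}\Lambda(c)\}$. Since the functor $(\bullet)_n:\U{\cal C}\to\psh({\cal C},\Lambda)$ taking a complex to its degree-$n$ part is a left adjoint (it is left adjoint to $D^n$, as noted after Definition~\ref{generators}), it preserves colimits, hence sends $I$-cell complexes to transfinite compositions of pushouts of the maps $(S^{n-1}\Lambda(c))_n\to(D^{n}\Lambda(c))_n$. But $(S^{n-1}\Lambda(c))_n$ is $0$ if $n\neq n-1$, hence always $0$, while $(D^{n}\Lambda(c))_n=\Lambda(c)$; so degreewise these maps are of the form $0\to\Lambda(c)$ or $0\to 0$. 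Thus $K_n$ is a retract of a direct sum (a transfinite composition of pushouts along $0$) of representables $\Lambda(c)$, hence a retract of a projective presheaf, hence projective (using that retracts of projectives are projective, which follows since $\hom(\bullet,\bullet)$-exactness passes to retracts). I would phrase this via the observation that $(\bullet)_n$ is left Quillen from $\U{\cal C}$ to $\psh({\cal C},\Lambda)$ with the "trivial" model structure in which every object is fibrant and cofibrant and weak equivalences are isomorphisms — no, more simply: just track cell complexes directly as above, which is cleanest.

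For the partial converse, suppose $K$ is bounded below, say $K_i=0$ for $i<N$, with each $K_i$ projective. I would build $K$ from below by an exhaustive filtration. Let $K^{(m)}$ be the brutal truncation that agrees with $K$ in degrees $\le m$ and is $0$ above; then $K=\colim_m K^{(m)}$ and each inclusion $K^{(m-1)}\hookrightarrow K^{(m)}$ needs to be shown a projective cofibration, after which $K^{(N)}=S^N K_N$ is projective cofibrant by Lemma~\ref{projective-cofibrant} and we conclude since projective cofibrations are closed under transfinite composition. The step $K^{(m-1)}\to K^{(m)}$ is, in the language used for $\ch(\Lambda)$ in \cite[§2.3]{hovey-modelcategories}, obtained as a pushout: writing $C=K_m$, $B_m=B_m K=\im(d:K_{m+1}\to K_m)$ — wait, since $K^{(m)}$ is truncated there is no $d$ out of degree $m+1$, so the relevant data is just $K_m$ together with the differential $d:K_m\to K_{m-1}$. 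One checks that the square with corners $S^{m-1}K_m\to D^m K_m$ (top) and $K^{(m-1)}\to K^{(m)}$ (bottom), where the left vertical is $d:K_m\to K_{m-1}=(S^{m-1}K_m)_{m-1}\to K^{(m-1)}_{m-1}$, is a pushout. Since $K_m$ is projective, $S^{m-1}K_m\to D^m K_m$ is a projective cofibration by the same argument as in Lemma~\ref{projective-cofibrant} (indeed $\hom_{\U{\cal C}}(D^mF,L)=\hom(F,L_m)$ and $\hom_{\U{\cal C}}(S^{m-1}F,L)=\hom(F,Z_{m-1}L)$, and a trivial fibration is degreewise surjective with $Z_{\bullet}$-surjective by Lemma~\ref{t-f-ker}, so lifts exist when $F$ is projective). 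Hence the pushout $K^{(m-1)}\to K^{(m)}$ is a projective cofibration.

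The main obstacle I anticipate is the bookkeeping in the converse: verifying carefully that the displayed square is genuinely a pushout in $\U{\cal C}$ (this is a degreewise check, easy but fiddly, and one must get the differential on $D^m K_m$ matching the differential $K_m\to K_{m-1}$ correctly, with signs), and making sure the filtration $\{K^{(m)}\}$ is exhaustive — which is exactly where boundedness below enters, since otherwise the colimit would only recover the "stupid" truncations and not $K$ itself in a way compatible with transfinite composition starting from a cofibrant stage. Everything else is a direct unwinding of the adjunctions $(S^n,Z_n)$, $(D^n,(\bullet)_n)$ together with Lemmas~\ref{t-f-ker} and~\ref{projective-cofibrant}.
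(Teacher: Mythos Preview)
Your proposal is correct and follows essentially the same approach as Hovey's Lemma~2.3.6, to which the paper's proof simply defers. One minor slip: in the adjunction $(D^n,(\bullet)_n)$ of the paper, $(\bullet)_n$ is the \emph{right} adjoint, not the left; however $(\bullet)_n$ does preserve colimits (they are computed degreewise in $\U{\cal C}$, or equivalently $(\bullet)_n\dashv D^{n+1}$), and you also overlooked that for the generating cofibration with index $m=n+1$ the degree-$n$ part is the identity $\Lambda(c)\to\Lambda(c)$ rather than $0\to\Lambda(c)$ --- both points are inconsequential for the argument.
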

\begin{proof}
  The proof of~\cite[Lemma 2.3.6]{hovey-modelcategories} applies.
\end{proof}

\begin{fac} \label{cof} A map $f : K \to K' \in \U{\cal C}$ is a
  projective cofibration if and only if $f$ is a degreewise split
  injection and the cokernel of $f$ is projective cofibrant.
\end{fac}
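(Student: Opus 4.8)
The plan is to prove the two implications separately, using throughout that in $\U{\cal C}$ the trivial fibrations are the objectwise epimorphic quasi-isomorphisms (Fact~\ref{functorcat-model}), so that projective cofibrations are exactly the maps with the left lifting property against all such.

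For the ``only if'' direction, assume $f\colon K\to K'$ is a projective cofibration. I would first note that its cokernel $Q$ is projective cofibrant, since $0\to Q$ is obtained from $f$ by cobase change along the unique map $K\to 0$ and cofibrations are stable under pushout. For degreewise splitness, fix $n\in\Z$ and recall that $(\bullet)_{n}\colon\U{\cal C}\to\psh({\cal C},\Lambda)$ has a right adjoint, sending a presheaf $F$ to the complex $D^{n+1}F$ of Definition~\ref{generators}. Since $D^{n+1}K_{n}$ is acyclic, the map $D^{n+1}K_{n}\to 0$ is a trivial fibration. The unit of this adjunction at $K$ is a chain map $\iota\colon K\to D^{n+1}K_{n}$ which is $\id$ in degree $n$; solving the lifting problem with $\iota$ on top, $f$ on the left, and $D^{n+1}K_{n}\to 0$ on the right extends $\iota$ to $\bar\iota\colon K'\to D^{n+1}K_{n}$ with $\bar\iota f=\iota$, and the degree-$n$ component of $\bar\iota$ is a presheaf retraction of $f_{n}$.

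For the ``if'' direction, let $f\colon K\to K'$ be a degreewise split injection with projective cofibrant cokernel $Q$; the plan is to solve an arbitrary lifting square with $f$ on the left and a trivial fibration $p\colon X\to Y$ on the right, directly. Applying $\dghom(\bullet,Z)$ to the degreewise split exact sequence $0\to K\to K'\to Q\to 0$ (for $Z=X$ and $Z=Y$) yields short exact sequences of complexes of $\Lambda$-modules, as $\dghom$ is built degreewise from the functors $\hom_{\psh({\cal C},\Lambda)}(\bullet,Z_{q})$, which are exact on degreewise split sequences. The crucial input is that $\dghom(Q,\bullet)$ is right Quillen: it is right adjoint to $\bullet\odot Q$, which is left Quillen because $\U{\cal C}$ is a model $\ch(\Lambda)$-category (Fact~\ref{uni-model-v-cat}) and $Q$ is cofibrant. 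Hence $p_{*}\colon\dghom(Q,X)\to\dghom(Q,Y)$ is a trivial fibration of complexes, \ie{} a surjective quasi-isomorphism; combined with the fact that $Q$ is degreewise projective (Fact~\ref{bnd}), this is equivalent to $\dghom(Q,\ker p)$ being acyclic. A diagram chase across the two short exact sequences — lifting along the surjection $\dghom(K',X)\to\dghom(K,X)$ and correcting by an element of $\dghom(Q,X)$ — then shows that
\begin{equation*}
  \dghom(K',X)\longrightarrow\dghom(K,X)\times_{\dghom(K,Y)}\dghom(K',Y)
\end{equation*}
is surjective with kernel $\dghom(Q,\ker p)$, hence a surjective quasi-isomorphism, and in particular surjective on $0$-cycles. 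Since the $0$-cycles of a $\dghom$-complex are precisely the chain maps, this surjectivity is exactly the assertion that the lifting square admits a diagonal. Thus $f$ is a projective cofibration.

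The one step that goes beyond routine homological bookkeeping — and which I expect to be the main obstacle — is the claim that $p_{*}$ is a quasi-isomorphism on $\dghom(Q,\bullet)$, \ie{} morally that a projective cofibrant complex of presheaves is dg-projective. This is where cofibrancy of $Q$ enters essentially; above I would extract it from the $\ch(\Lambda)$-enriched model structure of Fact~\ref{uni-model-v-cat}, but alternatively one can observe that $\ker p$ is acyclic (long exact homology sequence of $0\to\ker p\to X\to Y\to 0$) and invoke the standard fact that $\dghom(Q,\bullet)$ preserves acyclic complexes when $Q$ is cofibrant.
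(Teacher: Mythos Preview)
Your argument is correct. Since the paper's own proof is a one-line reference to Hovey's Proposition~2.3.9, the relevant comparison is with Hovey's argument.

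For the ``only if'' direction your treatment of the cokernel is identical to Hovey's, but your proof of degreewise splitness is different and in a sense more direct: Hovey first observes that cofibrations are monomorphisms, then uses Fact~\ref{bnd} to conclude that each cokernel term $Q_{n}$ is projective, whence the short exact sequence $0\to K_{n}\to K'_{n}\to Q_{n}\to 0$ splits. You instead produce a retraction of $f_{n}$ in one step by lifting against the trivial fibration $D^{n+1}K_{n}\to 0$, which simultaneously establishes injectivity and the splitting; this exploits the right adjoint to evaluation at degree~$n$ (the paper only records the left adjoint $D^{n}$ of $(\bullet)_{n}$, but your claim that $D^{n+1}$ is right adjoint is correct and easy to verify). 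For the ``if'' direction your argument is essentially Hovey's diagram chase, repackaged in the language of the $\ch(\Lambda)$-enrichment: where Hovey writes $K'_{n}\cong K_{n}\oplus Q_{n}$ and solves the lifting problem degree by degree using cofibrancy of $Q$, you encode the same computation as the surjectivity on $0$-cycles of the map from $\dghom(K',X)$ to the fiber product, invoking Fact~\ref{uni-model-v-cat} for the key input that $\dghom(Q,\bullet)$ takes trivial fibrations to trivial fibrations. The two arguments have the same content; yours makes the role of the enriched model structure more visible, while Hovey's is self-contained and avoids appealing to Fact~\ref{uni-model-v-cat}.
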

\begin{proof}
  The proof of~\cite[Pro.~2.3.9]{hovey-modelcategories} applies.
\end{proof}

\begin{cor} \label{cof-un} Let $K=\varinjlim_{n \in \N} K^{(n)}\in
  \U{\cal C}$, such that $K^{(n)}$ is projective cofibrant and
  bounded below for each $n$, and such that the transition morphisms
  $K^{(n)} \to K^{(n+1)}$ are degreewise split injective. Then $K$ is
  projective cofibrant.
\end{cor}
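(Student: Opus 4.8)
The plan is to reduce Corollary~\ref{cof-un} to Fact~\ref{cof} together with the stability of cofibrations under transfinite composition. Concretely, I would argue as follows.

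First, I would observe that a countable filtered colimit over $\N$ can be rewritten as a sequential composite. Consider the maps $0 = K^{(-1)} \to K^{(0)} \to K^{(1)} \to \cdots$, with colimit $K$. By Fact~\ref{cof} it suffices to show that each transition map $K^{(n)} \to K^{(n+1)}$ is a projective cofibration, since then $0 \to K$ is a transfinite (here: countable) composition of cofibrations, hence a cofibration, which is exactly the assertion that $K$ is projective cofibrant. So the problem localizes to a single step.

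Second, I would verify that $K^{(n)} \to K^{(n+1)}$ is a projective cofibration using Fact~\ref{cof} again: it is degreewise split injective by hypothesis, so I only need its cokernel to be projective cofibrant. Call this cokernel $Q^{(n)}$. Since $K^{(n)} \to K^{(n+1)}$ is degreewise split, $Q^{(n)}$ is degreewise a direct summand of $K^{(n+1)}$, and $K^{(n+1)}_i$ is a projective presheaf by Fact~\ref{bnd} (using that $K^{(n+1)}$ is projective cofibrant); since direct summands of projective presheaves are projective, each $Q^{(n)}_i$ is projective. Moreover $Q^{(n)}$ is bounded below because $K^{(n+1)}$ is. Hence by the partial converse in Fact~\ref{bnd}, $Q^{(n)}$ is projective cofibrant, and we are done.

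I expect the only subtlety to be the bookkeeping in the first paragraph: one must make sure the hypotheses are genuinely preserved along the chain, i.e.\ that one is allowed to apply Fact~\ref{cof} degreewise to the transition maps and then invoke closure of cofibrations under sequential colimits in the (cofibrantly generated, in particular combinatorial) model category $\U{\cal C}$. Both facts are standard, the latter being a formal property of any model category for $\lambda$-sequences with $\lambda$ a regular cardinal (here $\lambda = \omega$). No finer calculation seems necessary: everything reduces to Fact~\ref{cof} and Fact~\ref{bnd}, both already available.
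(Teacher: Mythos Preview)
Your proposal is correct and follows essentially the same route as the paper: show each transition map is a projective cofibration by applying Fact~\ref{cof} (degreewise split injection with cofibrant cokernel), where cofibrancy of the cokernel comes from Fact~\ref{bnd} (bounded below with projective terms, the latter because direct summands of projectives are projective), and then conclude by closure of cofibrations under transfinite composition. The only quibble is phrasing: in your first paragraph the clause ``By Fact~\ref{cof} it suffices\ldots'' really invokes closure under $\omega$-composites, not Fact~\ref{cof} itself; you clarify this later, so no harm done.
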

\begin{proof}
  We use the fact that $K$ is a sequential colimit of projective
  cofibrant objects with transition morphisms which are split
  injective in each degree hence the cokernel has projective objects
  in each degree. This implies together with boundedness and the
  previous lemma that the transition morphisms are projective
  cofibrations. Hence $K$ is projective cofibrant.
\end{proof}

Independently of monoidal structures on ${\cal C}$, we can always
define an objectwise tensor product on presheaves. The following lemma
gives a necessary and sufficient condition for this product to be a
Quillen bifunctor.

\begin{lem}\label{uni-dg-mon-mod}
  $\U{\cal C}$ is a symmetric monoidal model category for the
  objectwise tensor product if and only if for any pair of objects
  $c,d\in{\cal C}$, the presheaf of $\Lambda$-modules
  $\Lambda(c)\otimes\Lambda(d)$ is projective.
\end{lem}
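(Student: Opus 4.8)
The plan is to verify directly the two axioms defining a symmetric monoidal model structure --- the pushout-product axiom and the unit axiom --- for the objectwise tensor product on $\U{\cal C}$, whose monoidal unit $U$ is the constant presheaf $\Lambda_{\mathrm{cst}}$ concentrated in degree $0$. (Closedness of the monoidal structure, if one demands it, is automatic: $\U{\cal C}$ is locally presentable and $-\otimes K$ preserves colimits, so it has a right adjoint.) The ``only if'' direction is immediate: if the pushout-product axiom holds, then the tensor of two cofibrant objects is cofibrant, so since $S^0\Lambda(c)$ and $S^0\Lambda(d)$ are projective cofibrant (Lemma~\ref{projective-cofibrant}, as $\Lambda(c)$ and $\Lambda(d)$ are projective presheaves) we find that $S^0\Lambda(c)\otimes S^0\Lambda(d)=S^0(\Lambda(c)\otimes\Lambda(d))$ is cofibrant, whence $\Lambda(c)\otimes\Lambda(d)$ is projective by Fact~\ref{bnd}.

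For the converse, assume $\Lambda(c)\otimes\Lambda(d)$ is projective for all $c,d$. I would reduce the pushout-product axiom to the generating sets $I$ and $J$ of Definition~\ref{generators} (which generate the cofibrations, resp.\ the trivial cofibrations, by Fact~\ref{I}), using the standard fact that the maps $f$ for which $f\square g$ is a (trivial) cofibration whenever $g$ is one are closed under cobase change, transfinite composition and retracts (cf.~\cite[§4.2]{hovey-modelcategories}). The computational heart is the elementary observation that for two degreewise split injections of complexes $f$ and $g$, the pushout-product $f\square g$ is again a degreewise split injection, with cokernel complex $\mathrm{coker}(f)\otimes\mathrm{coker}(g)$. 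Applying this with $f\in I$ and $g\in I$, the cokernel equals $S^n\Lambda(c)\otimes S^m\Lambda(d)=S^{n+m}(\Lambda(c)\otimes\Lambda(d))$, which is bounded below with its unique nonzero term the projective presheaf $\Lambda(c)\otimes\Lambda(d)$, hence projective cofibrant by Fact~\ref{bnd}; so $f\square g$ is a cofibration by Fact~\ref{cof}. With $f\in I$ and $g\colon 0\to D^m\Lambda(d)$ in $J$, one has $f\square g=f\otimes\id$ and cokernel $S^n\Lambda(c)\otimes D^m\Lambda(d)=D^{n+m}(\Lambda(c)\otimes\Lambda(d))$, which is again projective cofibrant and, being a disk complex, acyclic; a degreewise split injection with acyclic cokernel is a weak equivalence, so $f\square g$ is a trivial cofibration (and symmetrically for $J\square I$). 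This establishes the pushout-product axiom.

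It remains to verify the unit axiom, which is the only step that calls for a separate argument, since $U=\Lambda_{\mathrm{cst}}$ is in general not cofibrant. I would take as cofibrant replacement $q\colon QU\to U$ a resolution $P_\bullet\to\Lambda_{\mathrm{cst}}$ of the presheaf $\Lambda_{\mathrm{cst}}$ by direct sums of representable presheaves, regarded as a complex in non-negative degrees; it is projective cofibrant by Fact~\ref{bnd}. For any cofibrant $K$ and any $c\in{\cal C}$, the map $(q\otimes\id_K)(c)$ is $q(c)\otimes_\Lambda\id_{K(c)}$, and $q(c)\colon P_\bullet(c)\to\Lambda$ is a quasi-isomorphism between bounded-below complexes of projective $\Lambda$-modules, hence a chain homotopy equivalence; therefore $q(c)\otimes_\Lambda\id_{K(c)}$ is one too, in particular a quasi-isomorphism. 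Thus $q\otimes\id_K$ is an objectwise quasi-isomorphism, \ie{} a weak equivalence, and since the pushout-product axiom is already in force this verifies the unit axiom for every choice of cofibrant replacement, completing the argument. I expect this last step to be the only genuine obstacle; everything else is routine bookkeeping around Facts~\ref{bnd} and~\ref{cof}.
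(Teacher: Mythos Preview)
Your argument is correct. The paper's proof of the pushout-product axiom is organized a bit differently: instead of computing cokernels by hand, it observes that each generating (trivial) cofibration factors as $i=i'\odot\Lambda(c)$ with $i'$ a generating (trivial) cofibration of $\ch(\Lambda)$, so that $i\square j\cong(i'\square j')\odot(\Lambda(c)\otimes\Lambda(d))$; one then invokes that $\ch(\Lambda)$ is monoidal model (so $i'\square j'$ is a (trivial) cofibration) and that $\bullet\odot P$ is left Quillen when $P$ is projective cofibrant (Lemma~\ref{projective-cofibrant} plus Fact~\ref{uni-model-v-cat}). Your direct computation via Fact~\ref{cof} reaches the same conclusion with slightly more bookkeeping but no additional ingredients.

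You also supply a verification of the unit axiom, which the paper does not address explicitly. Since the unit $\Lambda_{\mathrm{cst}}$ is not projective cofibrant in general (unless, say, ${\cal C}$ has a terminal object), this is a genuine point, and your argument---that a semi-representable resolution of $\Lambda_{\mathrm{cst}}$ is objectwise a bounded-below quasi-isomorphism between complexes of free $\Lambda$-modules, hence an objectwise chain homotopy equivalence preserved by tensoring---is the clean way to handle it.
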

\begin{proof}
  Since representables are cofibrant (Fact~\ref{bnd}) the condition
  is clearly necessary. For the converse, it suffices to prove the
  pushout-product $i\square j$ a (trivial) cofibration if $i$ and $j$
  are generating cofibrations (and one of them a generating trivial
  cofibration). By Fact~\ref{I}, $i$ and $j$ are of the form $i'\odot
  \Lambda(c)$ and $j'\odot\Lambda(d)$ for cofibrations $i'$, $j'$ of
  $\ch(\Lambda)$ (one of which is acylic), $c, d\in{\cal
    C}$. $i\square j$ can then be identified with $(i'\square j')\odot
  (\Lambda(c)\otimes\Lambda(d))$. $i'\square j'$ is a (trivial)
  cofibration since $\ch(\Lambda)$ is a symmetric monoidal model
  category. If $\Lambda(c)\otimes\Lambda(d)$ is projective then
  Lemma~\ref{projective-cofibrant} together with
  Fact~\ref{uni-model-v-cat} yields what we want.
\end{proof}

\subsection{Dold-Kan correspondence}
\label{sec:dold-kan}
Fix an abelian category ${\cal A}$. We start by recalling some basic
constructions relating simplicial objects and connective chain
complexes in ${\cal A}$.

Given a simplicial object $a_{\bullet}$ in ${\cal A}$, one can
associate to it a connective chain complex (called the Moore complex,
and usually still denoted by $a_{\bullet}$) which is $a_{n}$ in degree
$n$ and whose differentials are given by
\begin{equation*}
  \sum_{i=0}^{n}(-1)^{i}d_{i}:a_{n}\to a_{n-1}.
\end{equation*}
This clearly defines a functor $\simp^{\mathrm{op}}{\cal
  A}\to\ch_{\geq 0}({\cal A})$. Since every object in
$\simp^{\mathrm{op}}{\cal A}$ is canonically split, we get a second
functor $N:\simp^{\mathrm{op}}{\cal A}\to\ch_{\geq 0}({\cal A})$,
which associates to $a_{\bullet}$ the normalized chain complex:
\begin{align*}
  N(a_{\bullet})_{n}=\bigcap_{i=0}^{n-1}\ker(d_{i}:a_{n}\to a_{n-1}),&&(-1)^{n}d_{n}:N(a_{\bullet})_{n}\to N(a_{\bullet})_{n-1}.
\end{align*}
Clearly, there is a canonical embedding $N(a_{\bullet})\subset
a_{\bullet}$ but more is true:
\begin{fac}\label{moore-normalized}\mbox{}
  \begin{enumerate}
  \item The inclusion $N(a_{\bullet})\to a_{\bullet}$ is a natural
    chain homotopy equivalence.
  \item There is a functorial splitting
    $a_{\bullet}=N(a_{\bullet})\oplus N'(a_{\bullet})$ and $N'$ is an
    acyclic functor.
  \item $N$ is an equivalence of categories with quasi-inverse $\Gamma$.
  \item For any $n\in\N$, there is a natural isomorphism
    $\pi_{n}\Gamma \cong \h_{n}$.
  \end{enumerate}
\end{fac}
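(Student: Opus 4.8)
This is the classical Dold–Kan package, so the plan is to assemble it from the standard ingredients rather than reprove everything from scratch; the only care needed is that we are working in an arbitrary abelian category ${\cal A}$ rather than in $\Mod{\Lambda}$. The key observation that makes this possible is that all of the constructions — the simplicial identities, the degeneracies, the faces — are expressed purely in terms of finite limits (kernels, intersections) and the additive structure, so every statement reduces to a diagram chase which is valid in any abelian category by the Freyd–Mitchell embedding theorem, or alternatively by a direct ``generalized element'' argument. So my approach is: first establish (1) and (2) by exhibiting the splitting explicitly, then deduce (3) from (2) together with an inverse construction $\Gamma$, and finally check (4) by comparing homotopy groups with homology degreewise.

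For (1) and (2) I would introduce the \emph{degenerate subcomplex} $D(a_\bullet)_n = \sum_{i=0}^{n-1}\im(s_i\colon a_{n-1}\to a_n)$ and take $N'(a_\bullet) := D(a_\bullet)$. The heart of the matter is the classical lemma (see e.g. Weibel, \emph{An introduction to homological algebra}, §8.3, or Goerss–Jardine, \emph{Simplicial homotopy theory}, Ch.~III) that for each $n$ the canonical map $N(a_\bullet)_n \oplus D(a_\bullet)_n \to a_n$ is an isomorphism; this is proved by an explicit filtration of $a_n$ using the partial degeneracies and the simplicial identities, and the argument is entirely diagrammatic hence valid in ${\cal A}$. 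Both summands are visibly subcomplexes (for $N$ because $(-1)^n d_n$ preserves the intersection of the kernels of the $d_i$, $i<n$; for $D$ because faces of degeneracies are again degeneracies or lower faces by the simplicial identities), so this gives the functorial direct-sum decomposition of (2). That $D(a_\bullet) = N'(a_\bullet)$ is acyclic follows either from the contracting homotopy visible on $D$, or from the fact that the quotient inclusion $N(a_\bullet)\hookrightarrow a_\bullet$ is then a summand inclusion with acyclic complement once we know $H_*(N) \cong H_*(a_\bullet)$; to avoid circularity I would instead exhibit the explicit chain contraction of $D(a_\bullet)$ built from the extra degeneracy, again a formal computation. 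Statement (1) — that $N(a_\bullet)\to a_\bullet$ is a chain homotopy equivalence — is then immediate from (2), since a summand inclusion with a (chain-)contractible complement is a homotopy equivalence, with the homotopy inverse given by the projection onto the $N$-summand.

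For (3), I would define $\Gamma$ by the usual formula $\Gamma(C)_n = \bigoplus_{[n]\twoheadrightarrow[k]} C_k$, the sum over surjections in $\simp$, with faces and degeneracies dictated by the Eilenberg–Zilber relations (which express how to re-sort a composite $[m]\to[n]\twoheadrightarrow[k]$ into a surjection followed by an injection). One then checks $N\Gamma \cong \id$ and $\Gamma N\cong \id$ naturally; the first is essentially the definition unwound, and the second uses the decomposition of (2) to identify $\Gamma N(a_\bullet)_n$ with $\bigoplus_{[n]\twoheadrightarrow[k]} N(a_\bullet)_k \cong a_n$. Finally (4): $\pi_n$ of a simplicial object in ${\cal A}$ is by definition computed as the $n$-th homology of its Moore complex, which by (1) agrees with $\h_n$ of $N$; so $\pi_n\Gamma(C) = \h_n(\Gamma(C)\text{ as Moore complex}) \cong \h_n N\Gamma(C) \cong \h_n(C)$, naturally in $C$. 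The main obstacle — really the only non-formality — is the explicit verification that $N_n\oplus D_n \xrightarrow{\ \sim\ } a_n$ and the construction of the contraction of $D$; everything else is bookkeeping with the simplicial identities. Since these verifications are standard and lengthy, the honest thing is to carry them out once carefully (or cite Weibel/Goerss–Jardine after remarking that their arguments, being diagram chases, transport verbatim to a general abelian category) and leave the rest to the reader.
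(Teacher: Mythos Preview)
Your outline is the standard, correct route through the Dold--Kan correspondence, and there is nothing to fault in it. However, you should be aware that the paper does not prove this statement at all: it is recorded as a \emph{Fact} (the paper's environment for results quoted without proof from the literature) and is followed immediately by ``In particular, we obtain a sequence of adjunctions\ldots''. So there is no proof in the paper to compare against; the authors simply treat the Dold--Kan package as classical and move on. Your plan --- the degenerate subcomplex $D(a_\bullet)$, the splitting $a_n \cong N_n \oplus D_n$, the explicit contraction of $D$, and the formula $\Gamma(C)_n = \bigoplus_{[n]\twoheadrightarrow[k]} C_k$ --- is exactly what one finds in Weibel~\S8.3 or Goerss--Jardine~III.2, and your remark that the arguments are diagrammatic and hence transport to any abelian category is the right justification for the stated generality.
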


In particular, we obtain a sequence of adjunctions
\begin{equation}\label{eq:dold-kan-factorization}
\xymatrix{\sset\ar@<.7ex>[r]^-{\Lambda}&\simp^{\mathrm{op}}\Mod{\Lambda}\ar@<.7ex>[r]^-{N}\ar@<.7ex>[l]&\ch_{\geq 0}(\Lambda)\ar@<.7ex>[r]\ar@<.7ex>[l]^-{\Gamma}&\ar@<.7ex>[l]^-{\tau_{\geq 0}}\ch(\Lambda)},
\end{equation}
where the first is the ``free-forgetful'' adjunction, and the last is
the obvious adjunction between connective and unbounded chain
complexes involving the good truncation functor $\tau_{\geq 0}$. Endow
the category of simplicial sets with the Bousfield-Kan model structure
for which cofibrations are levelwise injections and weak equivalences
are weak homotopy equivalences, \ie{} isomorphisms on the homotopy
groups. By transfer along the forgetful functor this induces a model
structure on simplicial $\Lambda$-modules, for which the Dold-Kan
correspondence becomes a Quillen equivalence with the projective model
structure on $\ch_{\geq 0}(\Lambda)$ (\ie{} weak equivalences are
quasi-isomorphisms, fibrations are surjections in positive
degrees). 
It is clear that the last adjunction
in~(\ref{eq:dold-kan-factorization}) is Quillen as well.

\begin{pro} \label{dold-kan} The sequence
  in~(\ref{eq:dold-kan-factorization}) induces a Quillen
  adjunction $$(N\Lambda, \Gamma\tau_{\geq 0}) :
  \simp^{\mathrm{op}}\psh({\cal C})\to \U{\cal C}.$$ Here both
  categories are equipped with the projective model structure.
\end{pro}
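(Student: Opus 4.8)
The plan is to check that the composite left adjoint $N\Lambda$ (applied levelwise to simplicial presheaves, followed by truncation-compatible inclusion into unbounded complexes of presheaves) is a left Quillen functor, i.e. that it preserves cofibrations and trivial cofibrations. Since both model structures are defined objectwise (Fact~\ref{functorcat-model}), and since $N\Lambda$ and its right adjoint $\Gamma\tau_{\geq 0}$ are both computed objectwise on presheaves, the adjunction $(N\Lambda,\Gamma\tau_{\geq 0})$ on presheaf categories is obtained by applying the corresponding adjunction on $\simp^{\mathrm{op}}\set \to \ch(\Lambda)$ objectwise. Thus it suffices to observe that Quillen adjunctions are stable under passing to objectwise (projective) diagram categories: a morphism of simplicial presheaves is a (trivial) cofibration in the projective model structure iff it is so after evaluating at each $c \in {\cal C}$ — no, that is false for cofibrations, so one cannot argue quite so naively. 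The correct route is to use the characterization of left Quillen functors via generating (trivial) cofibrations.

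First I would recall that $\simp^{\mathrm{op}}\psh({\cal C})$ with the projective model structure is cofibrantly generated, with generating cofibrations $\{c_* \otimes (\partial\Delta^n \hookrightarrow \Delta^n)\}$ and generating trivial cofibrations $\{c_* \otimes (\Lambda^n_k \hookrightarrow \Delta^n)\}$, where $c_*$ denotes the presheaf represented by $c$ (the left adjoint to evaluation at $c$), ranging over $c \in {\cal C}$. By the standard criterion (\cite[Lemma~2.1.20]{hovey-modelcategories} or Fact-type reasoning as in the proof of Lemma~\ref{uni-dg-mon-mod}), it is enough to check that $N\Lambda$ sends each generating cofibration to a cofibration and each generating trivial cofibration to a trivial cofibration in $\U{\cal C}$. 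Because $N\Lambda$ is a left adjoint it commutes with the objectwise tensoring by sets and with the $c_*$ construction; concretely $N\Lambda(c_* \otimes S) \cong \Lambda(c) \otimes_{\Z} N\Lambda(S)$ placed in nonnegative degrees, functorially in the simplicial set $S$. So the problem reduces to the single-object case: I must check that $N\Lambda(\partial\Delta^n \hookrightarrow \Delta^n)$ is a cofibration in $\ch_{\geq 0}(\Lambda)$ and $N\Lambda(\Lambda^n_k \hookrightarrow \Delta^n)$ is a trivial cofibration — but this is exactly the statement that $(N\Lambda, \Gamma\tau_{\geq 0}) : \simp^{\mathrm{op}}\set \to \ch(\Lambda)$ is a Quillen adjunction, which was recalled in the paragraph preceding the proposition (the Dold-Kan Quillen equivalence composed with the evident Quillen pair $\ch_{\geq 0} \rightleftarrows \ch$).

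The remaining point is to upgrade "sends generators to (trivial) cofibrations" to "is left Quillen": a cofibration $i' \odot \Lambda(c)$ with $i'$ a (trivial) cofibration in $\ch(\Lambda)$ is sent to $(N\Lambda i') \odot \Lambda(c)$ — wait, more carefully, one applies $N\Lambda$ to $(\partial\Delta^n \hookrightarrow \Delta^n) \otimes c_*$ and identifies the result with $\bigl(N\Lambda(\partial\Delta^n) \to N\Lambda(\Delta^n)\bigr) \odot \Lambda(c)$ using that $\odot$ for $\U{\cal C}$ is the objectwise tensor with a constant presheaf and that $N\Lambda$ is monoidal enough for this identification (or simply: $N\Lambda$ applied objectwise commutes with the left Kan extension $c_*$). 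Since $\odot$ with a fixed complex $K$ of $\Z$-modules sends cofibrations of $\psh({\cal C},\Lambda)$-complexes to cofibrations when $K$ is cofibrant, and $N\Lambda(\partial\Delta^n \to \Delta^n)$ is a cofibration of complexes of abelian groups between cofibrant objects, the image is a cofibration in $\U{\cal C}$; similarly for trivial cofibrations. I expect the main (though still modest) obstacle to be bookkeeping: getting the identification $N\Lambda(S \otimes c_*) \cong N\Lambda(S) \odot \Lambda(c)$ precisely right, including the interaction with the truncation $\tau_{\geq 0}$ on the right-adjoint side and the placement of the resulting complex in degrees $\geq 0$, and then invoking that $\odot$ by a bounded-below complex of projective (here free) abelian groups is a left Quillen functor on $\U{\cal C}$ — which follows from Fact~\ref{bnd} and Lemma~\ref{projective-cofibrant}. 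Everything else is formal adjointness and objectwise computation.
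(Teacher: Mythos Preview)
Your approach via generating cofibrations is correct, but it is more laborious than the paper's argument, and you actually talked yourself out of the simpler route. You correctly observed that projective cofibrations are not detected objectwise and then pivoted to generators; but to show an adjunction is Quillen it is equally good to check that the \emph{right} adjoint preserves fibrations and trivial fibrations, and in the projective model structure those \emph{are} defined objectwise. Since $\Gamma\tau_{\geq 0}$ is applied sectionwise, and since (trivial) fibrations in both $\simp^{\mathrm{op}}\psh({\cal C})$ and $\U{\cal C}$ are exactly the sectionwise (trivial) fibrations, the whole claim reduces immediately to the single-object Quillen adjunction $\sset \rightleftarrows \ch(\Lambda)$ already established before the proposition. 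That is the paper's entire proof.

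Your route does work and has the minor virtue of making explicit how $N\Lambda$ interacts with representables, an identification that resurfaces later (e.g.\ in Lemma~\ref{dold-kan-local}). But the bookkeeping you flag as the ``main obstacle'' --- the identification $N\Lambda(S \times c_*) \cong N\Lambda(S)\odot\Lambda(c)$ and the appeal to $\bullet\odot\Lambda(c)$ being left Quillen --- is entirely avoidable here by arguing on the right-adjoint side.
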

\begin{proof}
  Consider presheaves on ${\cal C}$ with values in the different
  categories appearing in~(\ref{eq:dold-kan-factorization}). There is
  an induced sequence of adjunctions between these presheaf
  categories, similar to~(\ref{eq:dold-kan-factorization}). If we
  endow each of them with the projective model structure, then each of
  the right adjoint preserves (trivial) fibrations by our discussion
  above.
\end{proof}

\begin{lem}\label{dg-hty-cx}
  Let $K\in\U{\cal C}$ be cofibrant, $K'\in\U{\cal C}$ arbitrary. Then
  \begin{equation*}
    \Gamma\tau_{\geq 0}\dghom(K,K')
  \end{equation*}
  is a (left) homotopy function complex from $K$ to $K'$ (in the sense
  of~\cite[Def.~17.1.1]{hirschhorn:model-cat-loc}).
\end{lem}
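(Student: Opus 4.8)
The plan is to verify directly the three conditions in the definition of a (left) homotopy function complex: that the simplicial set $\Gamma\tau_{\geq 0}\dghom(K,K')$ receives a map from a cosimplicial resolution of $K$ in the appropriate sense, or more precisely—following Hirschhorn's Definition~17.1.1—that we have a functor from the category of cosimplicial resolutions of $K$ (or a single cosimplicial resolution) giving the correct simplicial mapping space up to weak equivalence, that it behaves correctly under weak equivalences in both variables (with the cofibrancy hypothesis on $K$), and that $\pi_0$ recovers the set of homotopy classes of maps $[K,K']$ in $\Ho(\U{\cal C})$. The most economical route, however, is to exploit the Quillen adjunction of Proposition~\ref{dold-kan} together with the fact that $\U{\cal C}$ is a simplicial model category via that adjunction's right adjoint, or rather: I would identify $\Gamma\tau_{\geq 0}\dghom(K,-)$ with the derived simplicial mapping space and invoke~\cite[Thm.~17.6.3 or Prop.~17.4.2]{hirschhorn:model-cat-loc}, which says any such derived mapping space model is a homotopy function complex.

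Concretely, first I would recall that for $K$ cofibrant and any $K'$, the complex $\dghom(K,K')$ already computes the derived hom, i.e.\ $\h_n\dghom(K,K') \cong \Ho(\U{\cal C})(K,K'[-n])$ for $n\in\Z$; this is because $\U{\cal C}$ is a model dg category (Fact~\ref{uni-model-v-cat}), so $\dghom(K,-)$ is a right Quillen functor when $K$ is cofibrant, hence sends trivial fibrations between arbitrary objects to quasi-isomorphisms, and sends $K'$ to something whose homology is the derived hom. Applying the truncation $\tau_{\geq 0}$ and then $\Gamma$ and using Fact~\ref{moore-normalized}(4), $\pi_n\Gamma\tau_{\geq 0}\dghom(K,K') \cong \h_n\dghom(K,K')$ for $n\geq 0$, which for $n=0$ gives exactly $[K,K']$ in the homotopy category. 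This establishes condition (1) (the $\pi_0$ condition).

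Next, for the homotopical functoriality conditions: if $K'\to K''$ is a weak equivalence (a quasi-isomorphism), then since $K$ is cofibrant, $\dghom(K,K')\to\dghom(K,K'')$ is a quasi-isomorphism of complexes—this follows because $\dghom(K,-)$, being right Quillen on the subcategory of cofibrant $K$, preserves weak equivalences between fibrant objects, and in $\U{\cal C}$ every object is fibrant (the fibrations are the degreewise surjections, and every complex surjects onto $0$; more carefully one reduces to the generating case using Fact~\ref{I} and the explicit form of $\dghom$ as $\totp\hom(K_p,K'_q)$, noting $K_p$ is a projective presheaf by Fact~\ref{bnd}). Since $\Gamma$ and $\tau_{\geq 0}$ both preserve quasi-isomorphisms/weak equivalences (Proposition~\ref{dold-kan} shows the relevant adjunctions are Quillen, and on the nose $\tau_{\geq 0}$ and $\Gamma$ are exact resp.\ an equivalence), the induced map of simplicial sets is a weak equivalence. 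A symmetric (in fact easier) argument handles a weak equivalence $K\to \tilde K$ between cofibrant objects in the first variable. Finally I would check the remaining structural requirement of Definition~17.1.1—that the construction extends to a left homotopy function complex datum over the category of cosimplicial resolutions—by appealing to~\cite[Prop.~16.1.3 and Cor.~16.5.5]{hirschhorn:model-cat-loc}: since $\U{\cal C}$ is a simplicial model category (through the Quillen pair of Proposition~\ref{dold-kan}, or more directly since it is a model dg category and $\ch_{\geq 0}$ maps to simplicial sets), the cotensor/mapping-space construction automatically furnishes a homotopy function complex, and $\Gamma\tau_{\geq 0}\dghom(K,K')$ is weakly equivalent to it.

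The main obstacle I anticipate is the bookkeeping needed to match our explicit $\dghom$ (given by $\totp\hom_{\psh({\cal C},\Lambda)}(K_p,K'_q)$) with the abstract simplicial mapping space coming from Proposition~\ref{dold-kan}, and in particular verifying that the product total complex computes the derived hom when $K$ is a general (unbounded) cofibrant complex rather than a bounded-below one—here one uses Fact~\ref{cof} to write $K$ with projective terms and degreewise split structure, and a $\lim^1$/tower argument (or the fact established around Corollary~\ref{cof-un} that cofibrant objects are suitably built from bounded-below pieces) to control the product totalization. Once that identification is in hand, the three defining properties of a homotopy function complex follow formally from the Quillen adjunction of Proposition~\ref{dold-kan} and the homological computations above.
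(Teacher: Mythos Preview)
Your proposal is workable but takes an unnecessarily roundabout route compared to the paper's three-line argument. The paper proceeds directly from the definition: since $K$ is cofibrant, the tensor $\bullet\odot K:\ch(\Lambda)\to\U{\cal C}$ is left Quillen with right adjoint $\dghom(K,\bullet)$; applying it to the standard cosimplicial resolution $N\Lambda(\simplex^{\bullet})$ of the unit (transported from $\sset$) yields a cosimplicial resolution of $K$, and then by adjunction
\[
\hom_{\U{\cal C}}\bigl(N\Lambda(\simplex^{\bullet})\odot K,\,K'\bigr)\;\cong\;\hom_{\sset}\bigl(\simplex^{\bullet},\,\Gamma\tau_{\geq 0}\dghom(K,K')\bigr)\;\cong\;\Gamma\tau_{\geq 0}\dghom(K,K').
\]
This literally exhibits the simplicial set as $\hom_{\U{\cal C}}(\tilde K,K')$ for a cosimplicial resolution $\tilde K$, which is Hirschhorn's Definition~17.1.1 on the nose.

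By contrast, your preliminary checks (that $\pi_{0}$ computes $[K,K']$, that the construction is invariant under weak equivalences in each variable) are \emph{consequences} of being a homotopy function complex, not its definition, so verifying them does not establish the lemma. Your argument only becomes a proof at the very last step, where you appeal to a simplicial model structure on $\U{\cal C}$ and identify the simplicial mapping space with $\Gamma\tau_{\geq 0}\dghom(K,K')$; but making that identification precise is exactly the adjunction calculation above, so the earlier work is redundant. Moreover, the anticipated ``obstacle'' concerning unbounded cofibrant $K$ and a $\lim^{1}$/tower argument is a red herring: the paper's argument uses only that $\bullet\odot K$ is left Quillen (from Fact~\ref{uni-model-v-cat} and the cofibrancy of $K$), which holds without any boundedness hypothesis and requires no analysis of $\totp$.
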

\begin{proof}
  Since $K$ is cofibrant, the functor $\bullet\odot
  K:\ch(\Lambda)\to\U{\cal C}$ is left Quillen, with right adjoint
  $\dghom(K,\bullet)$. We know that $\simplex^{\bullet}$ is a (the
  ``standard'') cosimplicial resolution of the terminal object in
  simplicial sets.  By~\cite[Pro.~17.4.16]{hirschhorn:model-cat-loc}, 
  the left homotopy function complex from $K$ to $K'$ is then given by
  \begin{equation*}
    \dghom(N\Lambda(\simplex^{\bullet})\odot K,K')\cong
    \sset(\simplex^{\bullet},\Gamma\tau_{\geq 0}\dghom(K,K'))\cong \Gamma\tau_{\geq 0}\dghom(K,K').
  \end{equation*}
\end{proof}

\begin{cor}\label{dg-hty-classes}
  Let $K,K'\in\U{\cal C}$ and assume that $K$ is cofibrant. For any
  $n\in\Z$, there is a natural isomorphism
  \begin{equation}\label{eq:dg-hty-classes}
    \hom_{\Ho(\U{\cal C})}(K,K'[n])\cong \h_{n}\dghom(K,K').
  \end{equation}
\end{cor}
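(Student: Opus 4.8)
The plan is to chain together the two facts just established: Lemma~\ref{dg-hty-cx} identifies $\Gamma\tau_{\geq 0}\dghom(K,K')$ as a homotopy function complex from $K$ to $K'$, and the general theory of model categories (e.g.~\cite[Ch.~17]{hirschhorn:model-cat-loc}) tells us that the set of path components $\pi_{0}$ of a homotopy function complex computes $\hom_{\Ho}(K,K')$. Since for a cofibrant $K$ the functor $\bullet\odot K$ is left Quillen (as noted in the proof of Lemma~\ref{dg-hty-cx}), replacing $K'$ by $K'[n]$ inside $\dghom$ just shifts the complex: $\dghom(K,K'[n])\cong\dghom(K,K')[n]$. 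So it will suffice to prove the case $n=0$ and then transport along the shift.

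First I would recall that $\pi_{0}$ of the simplicial set $\Gamma\tau_{\geq 0}\dghom(K,K')$ is, by Fact~\ref{moore-normalized}(4) (the natural isomorphism $\pi_{n}\Gamma\cong\h_{n}$, applied with $n=0$) together with the fact that $\tau_{\geq 0}$ does not alter $\h_{0}$, naturally identified with $\h_{0}\dghom(K,K')$. Next I would invoke the standard fact that for cofibrant $K$ and fibrant $K'$ — and here every object of $\U{\cal C}$ is fibrant, since fibrations are the objectwise epimorphisms — the set $\pi_{0}$ of any homotopy function complex from $K$ to $K'$ is in natural bijection with $\hom_{\Ho(\U{\cal C})}(K,K')$; this is~\cite[Thm.~17.6.3]{hirschhorn:model-cat-loc} or its immediate consequences. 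Combining these two identifications gives the isomorphism~(\ref{eq:dg-hty-classes}) for $n=0$.

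For general $n$, apply the $n=0$ case with $K'$ replaced by $K'[n]$: the right-hand side becomes $\h_{0}\dghom(K,K'[n])\cong\h_{0}(\dghom(K,K')[n])=\h_{n}\dghom(K,K')$, using that shifting a complex by $[n]$ shifts its homology accordingly, and that $\dghom(K,\bullet)$ commutes with the shift because it is the right adjoint of the left Quillen (indeed additive, exact) functor $\bullet\odot K$ and shift is induced by tensoring with $\Lambda[n]$. Naturality in both $K$ and $K'$ is inherited from the naturality statements in Lemma~\ref{dg-hty-cx} and Fact~\ref{moore-normalized}.

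The main obstacle is not any single hard computation but rather bookkeeping: making sure the naturality of the various identifications (homotopy function complex, $\pi_{0}\Gamma\cong\h_{0}$, the shift compatibility $\dghom(K,K'[n])\cong\dghom(K,K')[n]$) is genuinely functorial in $K$ and $K'$ and that no cofibrancy/fibrancy hypothesis is silently needed beyond what we have — cofibrancy of $K$ is used, fibrancy of $K'$ is automatic. One subtle point worth spelling out is why $\h_{0}$ of the truncated complex agrees with $\h_{0}$ of the original: $\tau_{\geq 0}$ is the good truncation, so $\h_{n}\tau_{\geq 0}C=\h_{n}C$ for $n\geq 0$ and in particular for $n=0$, which is all we use after reducing to $n=0$.
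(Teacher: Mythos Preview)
Your proof is correct and follows essentially the same route as the paper: identify $\pi_{0}$ of the homotopy function complex $\Gamma\tau_{\geq 0}\dghom(K,K'[n])$ with $\hom_{\Ho(\U{\cal C})}(K,K'[n])$ via Hirschhorn, then compute $\pi_{0}\Gamma\tau_{\geq 0}\cong\h_{0}$ and use the shift compatibility $\h_{0}\dghom(K,K'[n])\cong\h_{n}\dghom(K,K')$. The only cosmetic difference is that you first reduce to $n=0$ and then substitute $K'[n]$, whereas the paper works directly with $K'[n]$ from the start.
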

\begin{proof}
  By~\cite[Pro.~17.7.1]{hirschhorn:model-cat-loc},
  $\pi_{0}\Gamma\tau_{\geq 0}\dghom(K,K'[n])$ is naturally isomorphic
  to the set of homotopy classes from $K$ to $K'[n]$ which is equal to
  the left hand side of~(\ref{eq:dg-hty-classes}), by general
  properties of model categories. But
  \begin{align*}
    \pi_{0}\Gamma\tau_{\geq 0}\dghom(K,K'[n])&\cong\h_{0}\dghom(K,K'[n])\\
    &\cong\h_{n}\dghom(K,K').\qedhere
  \end{align*}
\end{proof}

\begin{lem}\label{hocolim-complexes}
  Let $K\in\simp^{\mathrm{op}}\U{\cal C}$ be a simplicial object in
  $\U{\cal C}$. Then the homotopy colimit
  $\dL\colim_{\simp^{\mathrm{op}}}K$ is given by
  \begin{equation*}
    \tots(K)\simeq\tots(NK).
  \end{equation*}
\end{lem}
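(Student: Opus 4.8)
The plan is to identify the homotopy colimit over $\simp^{\mathrm{op}}$ with the total complex of the associated Moore (equivalently normalized) double complex, by transporting the well-known simplicial-set statement through the Dold--Kan machinery set up above. First I would recall the general fact (e.g.\ from~\cite{hirschhorn:model-cat-loc}) that for a simplicial object $K$ in a model category, $\dL\colim_{\simp^{\mathrm{op}}}K$ can be computed as the geometric realization (coend over $\simp$) of a Reedy-cofibrant replacement of $K$, and that for the specific target $\U{\cal C}$ — which is a \emph{stable}, proper, combinatorial model category by Corollary~\ref{dg-model-prop} — this realization is modelled by the total complex functor. Concretely: applying $N$ degreewise to $K\in\simp^{\mathrm{op}}\U{\cal C}$ produces a double complex $NK$ of presheaves of complexes, and $\tots(NK)$ is the direct-sum totalization; the claim $\tots(K)\simeq\tots(NK)$ is then just Fact~\ref{moore-normalized}(1)--(2), which gives a functorial chain-homotopy equivalence $NK\hookrightarrow K$ with acyclic complement, and this degreewise equivalence of double complexes induces a quasi-isomorphism on direct-sum totalizations (a standard spectral-sequence or explicit-homotopy argument; boundedness-below of $NK$ in the simplicial direction makes the direct-sum and product totalizations agree up to the relevant range so no convergence subtlety arises).

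The substantive point is the first equivalence, $\dL\colim_{\simp^{\mathrm{op}}}K\simeq\tots(K)$. Here I would proceed as in Proposition~\ref{dold-kan}: the Quillen adjunction $(N\Lambda,\Gamma\tau_{\geq 0}):\simp^{\mathrm{op}}\psh({\cal C})\to\U{\cal C}$ is compatible with forming simplicial objects, so it induces a left Quillen functor $\simp^{\mathrm{op}}(\simp^{\mathrm{op}}\psh({\cal C}))\to\simp^{\mathrm{op}}\U{\cal C}$ on diagram categories, and left Quillen functors commute with homotopy colimits. Thus it suffices to know the statement for the universal case and then extend by the universal property of $\U{\cal C}$ established in Corollary~\ref{enriched-model-universal}: a simplicial object in $\U{\cal C}$ is, up to the relevant homotopy data, assembled from representables, and $\tots$ is a cocontinuous functor commuting with the tensoring by $\ch(\Lambda)$, hence it realizes the left dg Kan extension of the corresponding construction on representables. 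For representables the identification of $\dL\colim_{\simp^{\mathrm{op}}}$ with $\tots$ reduces, via the free-forgetful adjunction $\sset\to\simp^{\mathrm{op}}\Mod{\Lambda}$ and normalization, to the classical computation that the homotopy colimit of a simplicial set over $\simp^{\mathrm{op}}$ is its diagonal, and that $N\Lambda$ of the diagonal is chain-homotopy equivalent to the total complex of the bisimplicial normalization (Eilenberg--Zilber).

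The main obstacle, I expect, is making the reduction to representables fully precise: one must check that $\tots$ is genuinely the \emph{derived} colimit and not merely the naive one, i.e.\ that no Reedy-cofibrant replacement of $K$ is needed before totalizing — equivalently, that for the projective (objectwise) model structure on $\simp^{\mathrm{op}}\U{\cal C}$ every simplicial object is ``good enough'' after applying $\tots$, because $N$ already carries out the relevant cofibrant replacement (this is the content of Fact~\ref{moore-normalized}(2): $N'$ is acyclic, so the degeneracies are split off and do not contribute). I would phrase this carefully: $\tots\circ N$ is a left Quillen functor from the Reedy model structure, it agrees with $\tots$ up to the equivalence of the lemma's second assertion, and a direct comparison of the two sides as functors preserving colimits and $\ch(\Lambda)$-tensors, agreeing on representable simplicial objects $\Delta^n\odot\Lambda(c)$ (where both compute $\Lambda(c)$ up to quasi-isomorphism since $|\Delta^n|\simeq\pt$), pins them down by Corollary~\ref{enriched-model-universal}. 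The remaining verifications — sign conventions in the Moore complex, compatibility of $\tots$ with the Koszul differential, boundedness ensuring $\tots=\totp$ in the connective direction — are routine.
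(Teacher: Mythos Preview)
Your treatment of the second equivalence $\tots(K)\simeq\tots(NK)$ via Fact~\ref{moore-normalized} is correct and is exactly what the paper does for that half.

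For the first equivalence the paper does not argue from first principles at all: it observes that $(\U{\cal C},\text{quasi-isomorphisms},\tots)$ is a \emph{simplicial descent category} in the sense of Rodr\'{\i}guez Gonz\'alez~\cite{rodriguez-gonzalez:simpl.desc.,rodriguez-gonzales:phd} and invokes her general theorem that in any such structure $\tots$ computes $\dL\colim_{\simp^{\mathrm{op}}}$. That is a one-line citation, so there is little to compare beyond noting that your route is far more elaborate.

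More to the point, your route has a genuine gap at the step where you invoke Corollary~\ref{enriched-model-universal} to ``pin down'' the two functors. That corollary classifies left Quillen dg functors out of $\U{\cal C}$, but the functors you are comparing have source $\simp^{\mathrm{op}}\U{\cal C}$, not $\U{\cal C}$. To use the universal property at all you would first have to identify $\simp^{\mathrm{op}}\U{\cal C}$ with $\U({\cal C}\times\simp)$ and then verify that $\tots$ is a \emph{left Quillen} dg functor for the projective model structure on the latter --- the corollary says nothing about arbitrary cocontinuous, tensor-preserving functors. You assert cocontinuity and compatibility with the $\ch(\Lambda)$-tensoring but never check that $\tots$ takes generating projective cofibrations to cofibrations in $\U{\cal C}$; without this the uniqueness clause is simply inapplicable. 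Even granting that, ``unique up to a contractible choice'' yields only a zig-zag of natural weak equivalences between two left Quillen extensions of the same $\gamma:{\cal C}\times\simp\to\U{\cal C}$, and you have not explained how to extract from this a weak equivalence $\tots(K)\simeq\dL\colim_{\simp^{\mathrm{op}}}K$ valid for an \emph{arbitrary} (not projectively cofibrant) simplicial object $K$. The Eilenberg--Zilber/diagonal discussion addresses values on particular objects but does not close this structural gap.
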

\begin{proof}
  The category $\U{\cal C}$ together with the class of
  quasi-isomorphisms and the functor $\tots:\simp^{\mathrm{op}}\U{\cal
    C}\to\U{\cal C}$ defines a ``simplicial descent category'' in the
  sense
  of~\cite{rodriguez-gonzalez:simpl.desc.,rodriguez-gonzales:phd},
  see~\cite[§5.2]{rodriguez-gonzales:phd}. The result for the first
  object now follows
  from~\cite[Thm.~5.1.i]{rodriguez-gonzalez:simpl.desc.}. Since the
  Moore complex and the normalized complexes are homotopy equivalent
  (see Fact~\ref{moore-normalized}), the result for the second object
  follows from this (or
  see~\cite[Rem.~5.2.3]{rodriguez-gonzales:phd}).
\end{proof}

The Moore and normalized complexes also induce functors from
cosimplicial objects to coconnective chain complexes.
\begin{lem}\label{holim-complexes}
  Let $K\in\simp\U{\cal C}$ be a cosimplicial object in $\U{\cal
    C}$. Then the homotopy limit
  $\dR\lim_{\simp}K$ is given by
  \begin{equation*}
    \totp(K)\simeq\totp(NK).
  \end{equation*}
\end{lem}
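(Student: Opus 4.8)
The plan is to dualize the argument for Lemma~\ref{hocolim-complexes}. There the key input was that $\tots:\simp^{\mathrm{op}}\U{\cal C}\to\U{\cal C}$ together with quasi-isomorphisms makes $\U{\cal C}$ a simplicial descent category, so that $\tots$ computes the homotopy colimit over $\simp^{\mathrm{op}}$; here we want the analogous statement that $\totp:\simp\U{\cal C}\to\U{\cal C}$ computes the homotopy limit over $\simp$. Since $\U{\cal C}$ is stable (Corollary~\ref{dg-model-prop}), homotopy limits in $\U{\cal C}$ are computed by homotopy colimits in $\U{\cal C}^{\mathrm{op}}$, and passing to the opposite category interchanges the roles of connective and coconnective complexes, products and coproducts, $\simp$ and $\simp^{\mathrm{op}}$. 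So the first step is to observe that $\U{\cal C}^{\mathrm{op}}$ with the opposite model structure is again of a form to which the simplicial descent formalism applies, now with the functor $A_{\bullet}\mapsto(\totp A)^{\mathrm{op}}$, and that under the identification $(\simp\U{\cal C})^{\mathrm{op}}\cong\simp^{\mathrm{op}}(\U{\cal C}^{\mathrm{op}})$ the construction $\totp$ becomes $\tots$. Then $\dR\lim_{\simp}K = (\dL\colim_{\simp^{\mathrm{op}}}K^{\mathrm{op}})^{\mathrm{op}} = (\tots(K^{\mathrm{op}}))^{\mathrm{op}} = \totp(K)$ by Lemma~\ref{hocolim-complexes} applied in $\U{\cal C}^{\mathrm{op}}$.

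Alternatively, and perhaps more cleanly, one can run the argument directly without invoking stability. The point is that the totalization $\totp$ of a cosimplicial chain complex is, up to the usual sign conventions, the Tot of an associated double complex, and the normalized/Moore comparison in the cosimplicial direction is a cochain homotopy equivalence dual to Fact~\ref{moore-normalized}: the Moore cochain complex of a cosimplicial object splits functorially as the normalized part plus an acyclic part. Hence $\totp(K)\simeq\totp(NK)$ follows formally, exactly as in the proof of Lemma~\ref{hocolim-complexes}, from the cosimplicial analogue of Fact~\ref{moore-normalized}(1)--(2) (this analogue holds in any abelian category, applied here objectwise and degreewise to presheaves of $\Lambda$-modules). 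That reduces the lemma to identifying $\totp(K)$ with $\dR\lim_{\simp}K$.

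For the identification itself I would argue as follows. The cosimplicial replacement functor $\prod^{\simp}$ of Bousfield--Kan exhibits the homotopy limit of a $\simp$-diagram in a suitable model category as the totalization of an explicit cosimplicial object; applied to a cosimplicial object $K$ in $\U{\cal C}$ valued in complexes, the Bousfield--Kan totalization $\mathrm{Tot}$ is computed degreewise, and in each presheaf degree it is the product total complex of the relevant double complex. Concretely, one chooses a Reedy-fibrant replacement of $K$ (the Reedy and projective model structures on $\simp\U{\cal C}$ have the same weak equivalences, and since the fibrations in $\U{\cal C}$ are the degreewise surjections one can produce Reedy-fibrant replacements explicitly), and then $\dR\lim_{\simp}K$ is the ordinary limit of that replacement, which by the formula for matching objects in $\simp$ unwinds to $\totp$ of the normalized cochain complex of the replacement. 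Comparing with $\totp(NK)$ via the homotopy equivalence above gives the claim.

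The main obstacle is the bookkeeping in the last step: one must be careful that $\totp$ (product-total) is the right variant — this is forced because homotopy limits over $\simp$ involve products, not coproducts, over the degeneracy-indexed summands of the cosimplicial replacement, and getting the Koszul signs and the direction of truncation ($\tau_{\geq 0}$ versus $\tau_{\leq 0}$) consistent with the homological conventions fixed at the start of §\ref{sec:uni-dg} requires attention. Everything else is the evident dualization of Lemma~\ref{hocolim-complexes}, so in the write-up I would simply say that the proof is dual to that of Lemma~\ref{hocolim-complexes}, using the opposite model category $\U{\cal C}^{\mathrm{op}}$ (which is again stable, combinatorial, and of the type handled by the simplicial descent formalism) and the cosimplicial version of Fact~\ref{moore-normalized}, and leave the sign verifications to the reader.
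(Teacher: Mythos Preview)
Your approach is correct and essentially identical to the paper's: its entire proof reads ``This can be deduced from the proof of the previous lemma by passing to the opposite categories.'' Two small quibbles that do not affect the argument: the identity $\dR\lim_{\simp}^{\cal M}=(\dL\colim_{\simp^{\mathrm{op}}}^{{\cal M}^{\mathrm{op}}})^{\mathrm{op}}$ holds in any model category and has nothing to do with stability, and $\U{\cal C}^{\mathrm{op}}$ is \emph{not} combinatorial (opposites of locally presentable categories almost never are), so drop that parenthetical.
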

\begin{proof}
  This can be deduced from the proof of the previous lemma by passing
  to the opposite categories.
\end{proof}

Finally, the following result is often very useful (\eg in~\cite{choudhury-gallauer:iso-galois}).
\begin{lem}\label{uni-cpt-gen}
  The derived category $\Ho(\U{\cal C})$ is compactly generated by
  the representable objects.
\end{lem}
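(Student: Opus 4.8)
The plan is to prove two things: that each representable $\U{\cal C}(c) := S^0\Lambda(c)$ is a compact object of $\Ho(\U{\cal C})$, and that this family generates in the sense that an object $K'$ with $\hom_{\Ho(\U{\cal C})}(\Lambda(c), K'[n]) = 0$ for all $c \in {\cal C}$ and all $n \in \Z$ must be zero in $\Ho(\U{\cal C})$, i.e.\ acyclic. Here I am using the identification from Corollary~\ref{dg-hty-classes}: since $S^0\Lambda(c)$ is cofibrant (it is $S^0 F$ for the projective presheaf $F = \Lambda(c)$, see Lemma~\ref{projective-cofibrant}), we have
\begin{equation*}
  \hom_{\Ho(\U{\cal C})}(S^0\Lambda(c), K'[n]) \cong \h_n\dghom(S^0\Lambda(c), K').
\end{equation*}

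For generation, I would compute the right-hand side explicitly. Using the adjunction $(S^0, Z_0)$ from the discussion after Definition~\ref{generators}, together with the fact that $\dghom$ computes the internal mapping complex degreewise, one finds $\h_n\dghom(S^0\Lambda(c), K') \cong \h_n(\hom_{\psh}(\Lambda(c), K'_\bullet)) \cong \h_n(K'(c))$ by the (enriched) Yoneda lemma, since $\hom_{\psh({\cal C},\Lambda)}(\Lambda(c), -) \cong \mathrm{ev}_c$. Hence the vanishing of all these Hom-groups says precisely that $K'(c)$ is acyclic for every $c$, which means $K' \to 0$ is an objectwise quasi-isomorphism, i.e.\ a weak equivalence in $\U{\cal C}$, so $K' \cong 0$ in $\Ho(\U{\cal C})$. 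This also shows the representables detect isomorphisms, which combined with compactness is what "compactly generated" demands.

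For compactness I need that $\hom_{\Ho(\U{\cal C})}(S^0\Lambda(c), -)$ commutes with arbitrary coproducts. Again via Corollary~\ref{dg-hty-classes} this reduces to showing $K' \mapsto \h_n(K'(c))$ commutes with coproducts in $\Ho(\U{\cal C})$. The coproduct in $\Ho(\U{\cal C})$ of a family $(K'_i)$ is computed by the coproduct of cofibrant replacements, or — since $\U{\cal C}$ is stable and the projective model structure has all coproducts of fibrant-cofibrant objects again weakly equivalent to the derived coproduct, as coproducts of objectwise quasi-isomorphisms are objectwise quasi-isomorphisms — simply by the objectwise direct sum $\bigoplus_i K'_i$. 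Then $\h_n\big((\bigoplus_i K'_i)(c)\big) = \h_n\big(\bigoplus_i K'_i(c)\big) = \bigoplus_i \h_n(K'_i(c))$ since homology commutes with filtered colimits and arbitrary direct sums of complexes of $\Lambda$-modules. This gives compactness.

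The main obstacle is the bookkeeping around $\dghom$: one must make sure the explicit formula $\dghom(K, K') = \totp(\hom_{\psh}(K_p, K'_q))_{p,q}$ recalled at the start of the section really does specialize, for $K = S^0\Lambda(c)$ concentrated in degree $0$, to the complex $\hom_{\psh}(\Lambda(c), K'_\bullet) = K'(c)$ with the correct differential and no sign or indexing surprises, and that the isomorphism of Corollary~\ref{dg-hty-classes} is natural enough to push the coproduct computation through. Once that identification $\h_n\dghom(S^0\Lambda(c), K') \cong \h_n(K'(c))$ is in hand, both halves — detection of weak equivalences and preservation of coproducts — follow from elementary homological algebra, so I expect the proof to be short.
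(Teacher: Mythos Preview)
Your proposal is correct and follows essentially the same approach as the paper: both use Corollary~\ref{dg-hty-classes} to identify $\hom_{\Ho(\U{\cal C})}(\Lambda(c),K'[n])$ with $\h_n(K'(c))$, deduce generation from objectwise acyclicity, and deduce compactness from the fact that homology commutes with direct sums. Your write-up is more explicit about why $\dghom(S^0\Lambda(c),K')\cong K'(c)$ and why derived coproducts coincide with ordinary ones, but the argument is the same.
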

\begin{proof}
  If $\hom_{\Ho(\U{\cal C})}(\Lambda(c),K[n])=0$ for every $c\in{\cal
    C}$ and $n\in\Z$ then this means by Lemma~\ref{dg-hty-classes}
  that $K$ is objectwise acyclic and hence the zero object in the
  derived category.

  Moreover, given a set $(K^{(i)})_{i\in I}$ of objects in $\U{\cal
    C}$ and $c\in{\cal C}$, the canonical morphism
  \begin{equation*}
    \bigoplus_{i}\hom_{\Ho(\U{\cal C})}(\Lambda(c),K^{(i)})\to \hom_{\Ho(\U{\cal C})}(\Lambda(c),\bigoplus_{i}K^{(i)})
  \end{equation*}
  is identified, again by Lemma~\ref{dg-hty-classes}, with
  \begin{equation*}
    \bigoplus_{i}\h_{0}K^{(i)}(c)\to\h_{0}\bigoplus_{i}K^{(i)}(c),
  \end{equation*}
  which is invertible, thus the representable objects are also
  compact.
\end{proof}

\subsection{An example of a left dg Kan extension}
\label{sec:explicit-left-dg}
We would now like to give a more explicit description of the left
dg Kan extension in a specific situation arising in~\cite{choudhury-gallauer:iso-galois}. The setup is as follows: Let ${\cal C}$ be a
small ordinary category, and $\mathcal{B}$ a cocomplete
$\Lambda$-linear category which is tensored over
$\Mod{\Lambda}$. Finally, we are given a functor $\gamma:{\cal
  C}\to\ch(\mathcal{B})$.

First, notice that $\ch(\mathcal{B})$ is canonically a
dg category, and the tensors on ${\cal B}$ induce a tensor
operation of $\ch(\Lambda)$ on $\ch(\mathcal{B})$, by
\begin{equation*} (K\odot B)_{n}=\oplus_{p+q=n}K_{p}\odot B_{q}
\end{equation*} with the usual differentials.

Notice that by considering a presheaf of $\Lambda$-modules as
concentrated in degree 0, we can consider the restriction of
$\gamma^{*}$ to $\psh({\cal C},\Lambda)$, still denoted by
$\gamma^{*}$. The following lemma gives an alternative
characterization of (the underlying functor of) such a left
dg Kan extension.
\begin{lem}\label{kan-char}\mbox{}
  \begin{enumerate}
  \item $\gamma^{*}$ is the composition
    \begin{equation}\label{eq:kan-char-factorization}
      \U{\cal C}\xrightarrow{\ch(\gamma^{*})}\ch(\ch(\mathcal{B}))\xrightarrow{\tots}\ch(\mathcal{B}).
    \end{equation}
  \item Conversely, $\gamma^{*}$ is characterized (up to natural
    isomorphism) by:
    \begin{enumerate}
    \item $\gamma^{*}$ admits a factorization as in~\eqref{eq:kan-char-factorization}.
    \item $\gamma^{*}$ is cocontinuous.
    \item $\gamma^{*}\circ\Lambda(\bullet)\cong \gamma$.
    \end{enumerate}
  \end{enumerate}
\end{lem}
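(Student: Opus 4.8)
The plan is to prove~(1) by unwinding the coend that defines the enriched left Kan extension and matching it, degree by degree, with a totalization; then~(2) will follow formally from~(1) and the universal property of $\psh({\cal C},\Lambda)$ as a free enriched cocompletion (Fact~\ref{enriched-kan}).

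For~(1), I would start from the formula $\gamma^{*}(f)=\int^{c\in{\cal C}}f(c)\odot\gamma(c)$ of Fact~\ref{enriched-kan}, where $f(c)\in\ch(\Lambda)$ is the value at $c$ of a complex of presheaves $f\in\U{\cal C}$ and $\odot$ is the tensoring of $\ch(\Lambda)$ on $\ch(\mathcal{B})$ recalled in the text. The key observation is that this tensoring, by its very definition $(K\odot B)_{n}=\bigoplus_{p+q=n}K_{p}\odot B_{q}$ with Koszul signs, is nothing but the direct-sum totalization of the bicomplex $(K_{p}\odot B_{q})_{p,q}$; that is, $K\odot B=\tots\bigl((K_{p}\odot B)_{p}\bigr)$, where now $(-)\odot B\colon\Mod{\Lambda}\to\ch(\mathcal{B})$ is obtained by regarding a module as a complex concentrated in degree zero. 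Next I would interchange the coend with this totalization: coends are colimits, $\tots$ preserves all colimits (colimits in $\ch(\mathcal{B})$ are computed degreewise and commute with direct sums), and likewise $(-)_{n}\colon\ch(\mathcal{B})\to\mathcal{B}$ is cocontinuous, so
\[
  \gamma^{*}(f)=\int^{c}\tots\bigl((f_{p}(c)\odot\gamma(c))_{p}\bigr)\;\cong\;\tots\Bigl(\bigl(\textstyle\int^{c}f_{p}(c)\odot\gamma(c)\bigr)_{p}\Bigr).
\]
Finally, for a presheaf $F$ of $\Lambda$-modules placed in degree zero, $\int^{c}F(c)\odot\gamma(c)$ is by definition the degree-zero restriction $\gamma^{*}(F)$ that enters $\ch(\gamma^{*})$; applying this with $F=f_{p}$ identifies the right-hand side with $\tots(\ch(\gamma^{*})(f))$, which is~(1). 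It would then remain to check that the differentials agree: on the left the differential of $f(c)\odot\gamma(c)$ is $d^{f}\otimes1\pm1\otimes d^{\gamma(c)}$, and after passing to the coend the two summands become precisely the outer differential $\ch(\gamma^{*})(d^{f})$ and the inner differential (the internal differential of each $\gamma^{*}(f_{p})$), combined with the Koszul sign prescribed by $\tots$.

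For~(2), suppose I am given $\Phi\colon\U{\cal C}\to\ch(\mathcal{B})$ satisfying (a)--(c). A factorization as in~\eqref{eq:kan-char-factorization} means $\Phi\cong\tots\circ\ch(\Phi_{0})$ for some $\Phi_{0}\colon\psh({\cal C},\Lambda)\to\ch(\mathcal{B})$; evaluating on complexes concentrated in degree zero identifies $\Phi_{0}$ with the restriction of $\Phi$ along the fully faithful, cocontinuous inclusion $\psh({\cal C},\Lambda)\hookrightarrow\U{\cal C}$. Hence by~(b) the functor $\Phi_{0}$ is cocontinuous, and by~(c) one has $\Phi_{0}\circ\Lambda(\bullet)\cong\gamma$. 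The degree-zero restriction of $\gamma^{*}$, which is the left $\Mod{\Lambda}$-Kan extension of $\gamma$ along $\Lambda(\bullet)$, is cocontinuous with the same property. Since $\psh({\cal C},\Lambda)$ is the free $\Mod{\Lambda}$-cocompletion of ${\cal C}$, Fact~\ref{enriched-kan}(2) applied with ${\cal V}=\Mod{\Lambda}$ forces $\Phi_{0}\cong\gamma^{*}|_{\psh({\cal C},\Lambda)}$, whence $\Phi\cong\tots\circ\ch(\gamma^{*})\cong\gamma^{*}$ by~(1). That $\gamma^{*}$ itself satisfies (a), (b) and (c) is exactly~(1), the fact that a dg left adjoint has cocontinuous underlying functor, and Fact~\ref{enriched-kan}(3) respectively, so the three conditions do pin down $\gamma^{*}$ up to isomorphism.

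The part I expect to be fiddly is the bookkeeping at the end of~(1): one must verify that ``$\ch(\gamma^{*})$ followed by $\tots$'' reproduces not just the graded pieces of the coend $f\odot\gamma$ but also its differential, with the correct signs, and that every interchange of colimits ($\int^{c}$ past $\tots$ and past $(-)_{n}$) is justified. Granting this, part~(2) is purely formal.
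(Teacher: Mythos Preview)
Your argument is correct. Part~(1) spells out in detail what the paper compresses into one line (``follows from the definition of the tensor on $\ch(\mathcal{B})$ together with the fact that colimits there are computed degreewise''). For part~(2) the paper proceeds along the same outline---reduce via~(a) to a presheaf $K$ concentrated in degree~$0$---but then, rather than citing Fact~\ref{enriched-kan}(2) with ${\cal V}=\Mod{\Lambda}$ as you do, it carries out that universal property by hand: it writes $K\cong\int^{c}K(c)_{\mathrm{cst}}\otimes\Lambda(c)$ by co-Yoneda, pulls $G$ through the coend using cocontinuity, and then reduces the computation of $G(M_{\mathrm{cst}}\otimes\Lambda(c))$ to the case $M=\Lambda$ via a functorial free presentation $\oplus_{I_{2}}\Lambda\to\oplus_{I_{1}}\Lambda\to M\to 0$. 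Your appeal to the free $\Mod{\Lambda}$-cocompletion is a clean repackaging of exactly this; note however that Fact~\ref{enriched-kan}(2) concerns cocontinuous ${\cal V}$-functors, so strictly you need $\Phi_{0}$ to be $\Mod{\Lambda}$-cocontinuous (i.e.\ to preserve tensors by modules), and the paper's free-presentation step is precisely what shows this follows already from ordinary cocontinuity.
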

\begin{proof}\mbox{}
  \begin{enumerate}
  \item This follows easily from our definition of the tensor
    operation on $\ch(\mathcal{B})$ together with the fact that
    colimits in $\ch(\mathcal{B})$ are computed degreewise.
  \item We know that $\gamma^{*}$ satisfies the three properties in the
    statement. Conversely, let us prove that they characterize a
    functor $G$ completely (in terms of $\gamma$). By the first property we
    reduce to prove it for a presheaf $K$ concentrated in degree
    0. Then:
    \begin{align*}
      G(K)&\cong G(\int^{c}K(c)_{\mathrm{cst}}\otimes
      \Lambda(c))&&\text{by the Yoneda
        lemma}\\
      &\cong \int^{c}G(K(c)_{\mathrm{cst}}\otimes\Lambda(c))&&\text{by cocontinuity}.
    \end{align*}
  We are thus reduced to show
    \begin{equation*}
      G(K_{\mathrm{cst}}\otimes\Lambda(c))\cong K\odot \gamma(c),
    \end{equation*}
    naturally in modules $K$ and objects $c\in C$. For this
    we can take a functorial exact sequence
    \begin{equation*}
      \oplus_{I_{2}}\Lambda\xrightarrow{\alpha} \oplus_{I_{1}}\Lambda\to K\to 0
    \end{equation*}
    of $\Lambda$-modules, by which we easily reduce to $K$ free using
    the cocontinuity of $G$. Again by cocontinuity we further reduce
    to $K=\Lambda$ and then our contention follows from the third
    property.\qedhere{}
  \end{enumerate}
\end{proof}

\section{Cofibrant replacement}
\label{sec:cof}
Our goal in this section is to resolve functorially any presheaf of
complexes by a cofibrant object made up of representables. It is clear
how to resolve a single presheaf of $\Lambda$-modules, and it is also
not difficult to extend this to bounded below complexes of presheaves
(essentially due to Fact~\ref{bnd}). As the example
in~\cite[2.3.7]{hovey-modelcategories} shows, not every complex of
representables is cofibrant hence naively extending the procedure to
the unbounded case might apriori run into problems. However, we will
show that such problems do not occur.

\subsection{Preliminaries from homological algebra}

Recall the following basic facts in homological algebra. 
\begin{lem}
  \label{qis-sequential-colimit}
  Let ${\cal A}$ be a Grothendieck abelian category, and let $D,
  D':\N\to \ch({\cal A})$ be two diagrams of complexes in ${\cal A}$
  ($\N$ considered as an ordered set). If $g:D\to D'$ is a morphism of
  diagrams of complexes which is objectwise a quasi-isomorphism, then
  also $\varinjlim g$ is a quasi-isomorphism.
\end{lem}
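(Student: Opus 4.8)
The plan is to reduce the statement to the classical fact that filtered colimits are exact in a Grothendieck abelian category, applied to homology. First I would recall that $\N$, viewed as an ordered set, is a filtered category, so the colimit $\varinjlim$ over $\N$ is a filtered colimit; in a Grothendieck abelian category ${\cal A}$ such colimits are exact (axiom AB5). The key observation is that homology commutes with filtered colimits: for a complex $C$, the functor $C\mapsto\h_n(C)$ is built from kernels, cokernels and subquotients, and since filtered colimits are exact they commute with all of these. Concretely, $\h_n(\varinjlim D)\cong\varinjlim\h_n(D)$ naturally, because $\varinjlim$ preserves the kernel $Z_n$, the image $B_n$, and hence the quotient $Z_n/B_n$.

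The main steps, in order, are: (1) note that the transition maps and the morphism $g$ assemble into a morphism of $\N$-indexed diagrams, so that applying $\varinjlim$ degreewise yields complexes $\varinjlim D$ and $\varinjlim D'$ together with a morphism $\varinjlim g$ between them; (2) invoke exactness of filtered colimits in ${\cal A}$ to get, for each $n$, a natural isomorphism $\h_n(\varinjlim D)\cong\varinjlim\h_n(D)$, and likewise for $D'$; (3) observe that under these identifications the map $\h_n(\varinjlim g)$ corresponds to $\varinjlim\h_n(g)$; (4) since each $g_k:D(k)\to D'(k)$ is a quasi-isomorphism, each $\h_n(g_k)$ is an isomorphism, and a filtered colimit of isomorphisms is an isomorphism; hence $\h_n(\varinjlim g)$ is an isomorphism for all $n$, i.e. $\varinjlim g$ is a quasi-isomorphism.

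I do not expect any genuine obstacle here — this is a standard homological algebra lemma, and the only thing requiring a little care is the naturality of the isomorphism $\h_n(\varinjlim D)\cong\varinjlim\h_n(D)$, which is what makes step (3) legitimate. One clean way to package this is to say that the short exact sequences $0\to Z_n D(k)\to D(k)_n\to B_{n-1}D(k)\to 0$ and $0\to B_n D(k)\to Z_n D(k)\to \h_n D(k)\to 0$ are natural in $k$, pass to the colimit using exactness, and conclude. Alternatively one can cite the standard reference for this fact; since the excerpt labels it ``Recall the following basic facts in homological algebra,'' a short argument or citation suffices and no lengthy calculation is needed.
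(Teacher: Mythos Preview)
Your argument is correct. The paper, however, takes a different route: it writes down the telescope short exact sequence
\[
0 \to \bigoplus_{n\geq 0} D_n \xrightarrow{\ \id - \text{shift}\ } \bigoplus_{n\geq 0} D_n \to \varinjlim D \to 0,
\]
checks injectivity on the left using (AB5), and then passes to the derived category to obtain a morphism of distinguished triangles; since the two outer vertical maps $\oplus_n g_n$ are quasi-isomorphisms, so is $\varinjlim g$. Your approach is more elementary (no derived category needed) and in fact immediately generalizes to arbitrary filtered diagrams rather than just $\N$-indexed ones. The paper's telescope argument, on the other hand, is the one that transports directly to sequential homotopy colimits in stable model categories, which is closer in spirit to how the lemma is used later; but for the statement as written your proof is the cleaner one.
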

\begin{proof}
  The sequence
  \begin{equation*}
    0\to \oplus_{n\geq 0}D_{n}\to\oplus_{n\geq 0}D_{n}\to \varinjlim
    D\to 0
  \end{equation*}
  is exact, where the second arrow on $D_{n}$ is defined to be
  $\id-D_{n\to n+1}$. Indeed, the only non-trivial part is exactness
  on the left, and for this one notices that the analogous map
  \begin{equation*}
    \oplus_{n=0}^{m}A_{n}\to\oplus_{n=0}^{m+1}A_{n}
  \end{equation*}
  is a mono and hence stays so after taking the limit over $m$ because
  ${\cal A}$ satisfies (AB5).

  $g$ then induces a morphism of short exact sequences of complexes
  and hence a morphism of distinguished triangles in the derived
  category (which exists because ${\cal A}$ is a Grothendieck
  category). It is then clear that the two vertical arrows
  $\oplus_{n}g_{n}$ are isomorphisms in the derived category hence so
  is the third vertical arrow, $\varinjlim g$.
\end{proof}

\begin{lem}
\label{lem:total-complex-qis}
Let ${\cal A}$ be an abelian Grothendieck category and let $C,C'$ be
two bounded below bicomplexes (i.e.  $C_{\bullet,q}=0$ for all $q\ll
0$) in ${\cal A}$, and let $f:C\to C'$ be a morphism of
bicomplexes. If $f_{\bullet,q}:C_{\bullet,q}\to C'_{\bullet,q}$ is a
quasi-isomorphism of complexes for all $q$, then $\tots(f)$ is a
quasi-isomorphism.
\end{lem}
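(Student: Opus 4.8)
The plan is to filter $\tots$ by the second index $q$ and reduce to a bounded situation by a sequential-colimit argument. Fix an integer $q_{0}$ with $C_{\bullet,q}=C'_{\bullet,q}=0$ for all $q<q_{0}$, and for $s\in\N$ let $F_{s}\subseteq\tots(C)$ be the subobject whose degree-$n$ component is $\bigoplus_{p+q=n,\ q\le q_{0}+s}C_{p,q}$. Since every component of the total differential strictly lowers one of the two indices, it never raises $q$; hence $F_{s}$ is a subcomplex, the $F_{s}$ are nested, and $\colim_{s}F_{s}=\tots(C)$ because $C_{p,q}=0$ for $q<q_{0}$. Define $F'_{s}\subseteq\tots(C')$ in the same way; then $\tots(f)=\colim_{s}(F_{s}\to F'_{s})$. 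By Lemma~\ref{qis-sequential-colimit} --- applicable because ${\cal A}$ is Grothendieck --- it therefore suffices to show that each $F_{s}\to F'_{s}$ is a quasi-isomorphism.

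This I would establish by induction on $s$. For $s=0$ the complex $F_{0}$ is the single column $C_{\bullet,q_{0}}$ (with its horizontal differential) and $F_{0}\to F'_{0}$ is $f_{\bullet,q_{0}}$, a quasi-isomorphism by hypothesis. For the inductive step, the quotient $F_{s}/F_{s-1}$ is the column $C_{\bullet,q_{0}+s}$ equipped with $\pm$ its horizontal differential, so there is a short exact sequence of complexes
\begin{equation*}
  0\longrightarrow F_{s-1}\longrightarrow F_{s}\longrightarrow C_{\bullet,q_{0}+s}\longrightarrow 0
\end{equation*}
and likewise for $C'$. The morphism $f$ maps the first sequence to the second, hence induces a morphism of the associated long exact homology sequences (these exist in any abelian category via the snake lemma). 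On the outer terms this morphism is an isomorphism --- on the $F_{s-1}$-terms by the inductive hypothesis, on the $C_{\bullet,q_{0}+s}$-terms by assumption (a global sign on the differential changes neither the homology nor the fact that $f$ is a quasi-isomorphism there) --- so the five lemma gives that it is an isomorphism on $\h_{\ast}(F_{s})$ as well, completing the induction.

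I do not expect a genuine obstacle: this is a routine, spectral-sequence-type bookkeeping argument, engineered to avoid infinite products. The only points that need a little care are (i) filtering by $q$ rather than by $p$, so that the graded pieces are exactly the complexes $C_{\bullet,q}$ on which the hypothesis is phrased; (ii) using boundedness below in $q$ so that the filtration starts at a finite stage and is thus indexed by $\N$, which is what makes Lemma~\ref{qis-sequential-colimit} applicable; and (iii) recalling that the long exact homology sequence, and hence the five lemma step, is available over an arbitrary abelian category, so no hypothesis on ${\cal A}$ beyond the stated one is needed for the induction itself (Grothendieck-ness enters only through the colimit lemma).
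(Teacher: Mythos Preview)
Your proof is correct and follows essentially the same route as the paper: filter $\tots$ by stupid truncation in the $q$-direction, prove the filtered pieces are quasi-isomorphic by induction via the short exact sequence and the five lemma, and pass to the colimit using Lemma~\ref{qis-sequential-colimit}. The paper's $\tots(C(n))$ is exactly your $F_{n-q_{0}}$ (after normalizing $q_{0}=0$), and its quotient $C_{\bullet,n}[-n]$ is your column up to the shift you implicitly absorb.
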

\begin{proof}
  Without loss of generality, $C_{\bullet,q}=0$ for all negative $q$.
  Let $C(n)=C_{\bullet,\leq n}$, $n\geq 0$, be the stupid
  truncation. In other words, $C(n)$ is the subbicomplex of $C$
  satisfying
    \begin{align*}
      C(n)_{p,q}:=
      \begin{cases}
        C_{p,q}&: q\leq n\\
        0&:q> n;
      \end{cases}
    \end{align*} similarly for $C'$ and $f$. We claim that
    $\tots(f(n))$ is a quasi-isomorphism for all $n$. This is
    proved by induction on $n$. For $n=0$ it is true because of our
    assumption on $f$. For the induction step we use the short exact
    sequence
    \begin{equation*}
      0\to\tots(C(n-1))\to\tots(C(n))\to
      C_{\bullet,n}[-n]\to 0
    \end{equation*}
    of complexes in ${\cal A}$. $f$ gives rise to a morphism of short
    exact sequences, where the induction hypothesis for $n-1$ together
    with our assumption on $f$ show that the outer two arrows are
    quasi-isomorphisms. By the 5-lemma also the middle one, \ie{}
    $\tots(f(n))$, is a quasi-isomorphism.

    Now apply the previous lemma to $D_{n}=\tots(C(n))$,
    $D'_{n}=\tots(C'(n))$, and $g_{n}=\tots(f(n))$ to get the
    result. (One uses here that $\tots$ preserves colimits.)
\end{proof}

\subsection{Construction and proof}

Consider the functor category $\psh({\cal C},\Lambda)$. It is a
Grothendieck abelian category. We call an object of $\psh({\cal
  C},\Lambda)$ \emph{semi-representable} 
if it is a small coproduct of representables. An \emph{SR-resolution}
of an object $K\in\psh({\cal C},\Lambda)$ is a complex $K_{\bullet}$
of semi-representables in $\psh({\cal C},\Lambda)$ together with a
quasi-isomorphism of complexes $K_{\bullet}\to S^{0}K$. Similarly one
defines SR-resolutions for complexes in $\psh({\cal C},\Lambda)$. Note
that a bounded below \emph{SR-resolution} is a cofibrant replacement
by Fact~\ref{bnd}.

\begin{lem}
  Objects in $\psh({\cal C},\Lambda)$ possess a functorial SR-resolution;
  more precisely there exists a functor
  \begin{equation*}
    P:\psh({\cal C},\Lambda)\to \U{\cal C}
  \end{equation*}
  together with a natural transformation $P\to S^{0}$ satisfying:
  \begin{itemize}
  \item the components of $P\to S^{0}$ are all bounded below
    SR-resolutions;
  \item $P$ maps the zero morphism to the zero morphism;
  \item $P$ takes injective morphisms to degreewise split injective morphisms.
  \end{itemize}
\end{lem}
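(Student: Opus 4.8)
The plan is to build $P$ by a standard functorial construction using the canonical free semi-representable presheaf on a given one. For any presheaf $F\in\psh({\cal C},\Lambda)$, set $\Phi(F):=\bigoplus_{c\in{\cal C}}\bigoplus_{x\in F(c)}\Lambda(c)$, the coproduct over all local sections; this comes with a canonical surjection $\epsilon_F:\Phi(F)\surj F$ which is natural in $F$, sends the zero morphism to the zero morphism, and sends an injection $F\inj F'$ to a split injection $\Phi(F)\inj\Phi(F')$ (indexed coproducts of the identity on $\Lambda(c)$ over the inclusion of index sets $F(c)\hookrightarrow F'(c)$). Iterating, one obtains a functorial augmented simplicial (equivalently, by taking alternating sums of faces, a functorial chain) resolution: the classical bar-type construction $\cdots\to\Phi(\Phi(K))\to\Phi(K)\to K\to 0$, whose associated Moore complex $P(K)_\bullet$ consists of semi-representables in each degree, is bounded below (concentrated in non-negative degrees), and comes with a quasi-isomorphism $P(K)\to S^0K$ because the augmented simplicial object admits an extra degeneracy after applying $K\mapsto K(c)$ objectwise (hence it is an SR-resolution, and a cofibrant replacement by Fact~\ref{bnd}).

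The three bullet conditions then follow by inspection. Functoriality of $P$ and of $P\to S^0$ is immediate from the functoriality of $\Phi$ and $\epsilon$. That $P$ preserves the zero morphism is clear since $\Phi$ does and the Moore complex differentials are built from the face maps functorially. For the last bullet, if $f:K\inj K'$ is injective, then each $F(c)\hookrightarrow F'(c)$ in the iterated construction is injective, so each $\Phi^{\,n}(K)\to\Phi^{\,n}(K')$ is a split injection of presheaves; one checks that these splittings are compatible with the face maps up to the alternating-sum differential well enough to conclude that $P(f)$ is degreewise split injective — in fact, because $\Phi^{\,n}(K')=\Phi^{\,n}(K)\oplus(\text{coproduct over the complementary index sets})$ compatibly in $n$, the chain map $P(f)$ is a degreewise split monomorphism with a semi-representable cokernel complex.

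The only real subtlety — and the step I expect to require the most care — is verifying that $P(K)\to S^0K$ is a quasi-isomorphism in the unbounded/general-site setting, i.e.\ checking acyclicity of the augmented complex. The clean way is to evaluate at each $c\in{\cal C}$: the augmented simplicial $\Lambda$-module $\Phi^{\bullet+1}(K)(c)\to K(c)$ has a contracting homotopy (extra degeneracy) coming from the unit $x\mapsto x$ of the "free semi-representable" comonad, so it is aspherical, and passing to Moore complexes gives a quasi-isomorphism $P(K)(c)\xrightarrow{\sim}K(c)$ for every $c$; since quasi-isomorphisms in $\psh({\cal C},\Lambda)$ are detected objectwise, $P(K)\to S^0K$ is a quasi-isomorphism. (Alternatively one invokes Lemma~\ref{qis-sequential-colimit} on the skeletal filtration, but the extra-degeneracy argument is the most direct.) One must be slightly careful that the extra degeneracy is only natural in the semi-representable direction, not fully functorial in $K$; this is why acyclicity is argued pointwise rather than by exhibiting a natural homotopy on $P$ itself, but that is enough for the stated properties.
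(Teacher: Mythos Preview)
Your construction has a genuine gap at the second bullet. With $\Phi(F)=\bigoplus_{c}\bigoplus_{x\in F(c)}\Lambda(c)$ indexed over \emph{all} sections, the functor $\Phi$ does \emph{not} send the zero morphism to the zero morphism: $\Phi(0)$ maps the summand $\Lambda(c)$ indexed by $(c,x)$ identically onto the summand indexed by $(c,0)$, and this is not the zero map. Iterating, $P(0)_n=\Phi^{n+1}(0)\neq 0$, so $P$ fails the stated property. (This property matters downstream: it is what guarantees, in the subsequent proposition, that applying $P$ degreewise to a complex yields a \emph{bi}complex, since one needs $P(d)\circ P(d)=P(0)=0$.)

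The fix is to index over $F(c)\setminus\{0\}$ instead. Then the summand indexed by $x$ with $f(x)=0$ is sent to zero rather than to a $0$-indexed summand, so the modified functor $\Phi'$ satisfies $\Phi'(0)=0$ while functoriality and the split-injection property survive. This $\Phi'$ is still a comonad --- it is $L'U'$ for the free/forgetful adjunction between $\psh(\mathcal{C},\Lambda)$ and $\mathrm{ob}(\mathcal{C})$-indexed \emph{pointed} sets --- so your bar construction and extra-degeneracy argument go through unchanged.

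For comparison, the paper takes a more pedestrian route that sidesteps the comonad entirely: it builds $P(K)$ by iterated kernels, setting $P(K)_0=\Phi'(K)$, then $P(K)_1=\Phi'(\ker(P(K)_0\to K))$, and so on. Exactness of the augmented complex is then tautological (each stage surjects onto the previous kernel), and all three bullets follow by direct inspection. Your bar approach is more structured and exhibits the resolution as a genuine simplicial object, but the acyclicity step is subtler: the extra degeneracy is only a map of pointed sets, not of $\Lambda$-modules, so one cannot write down a $\Lambda$-linear contracting homotopy directly and must instead pass through homotopy groups of the underlying Kan complex via Dold--Kan.
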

\begin{proof}
  Let $K$ be an arbitrary object of $\psh({\cal C},\Lambda)$. There is a canonical
  epimorphism
  \begin{equation*}
    K_{0}:=\bigoplus_{K(c)\backslash 0}\Lambda(c)\to\colim_{K(c)}\Lambda(c)\xlongrightarrow{\sim}K.
  \end{equation*}
  Taking the kernel and repeating this construction we get a complex
  $K_{\bullet}$ together with a quasi-isomorphism $K_{\bullet}\to
  S^{0}K$.

  Given $f: K\to K'$ and $x\in K(c)\backslash 0$ such that $f(x)=0$,
  the component $\Lambda(c)$ corresponding to $x$ is mapped to 0,
  otherwise it maps identically to $\Lambda(c)$ corresponding to
  $f(x)$. It is easily checked that this induces a morphism $\ker(K_0
  \to K) \to \ker(K'_0 \to K')$ hence repeating we obtain
  $P(f):P(K)\to P(K')$. Functoriality is clear.

  If $f$ is injective then by this description $f_0: K_0 \to K'_0$ is
  split injective, and the induced morphism $\ker(K_0 \to K) \to
  \ker(K'_0 \to K')$ is injective. Repeating this argument, we see
  that the induced morphism $P(f)$ is degreewise split injective.
\end{proof}

\begin{pro}\label{cof-res}
  There exists an endofunctor $Q:\U{\cal C}\to\U{\cal C}$ together
  with a natural transformation $Q\to\id$ satisfying:
  \begin{itemize}
  \item the components of $Q\to\id$ are trivial fibrations;
  \item the image of $Q$ consists of projective cofibrant complexes of
    semi-representables.
  \end{itemize}
  In particular, $Q$ is a cofibrant replacement functor.
\end{pro}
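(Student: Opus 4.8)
The plan is to upgrade the pointwise resolution functor $P$ from the previous lemma to an endofunctor of $\U{\cal C}$ by applying it degreewise and totalizing. Explicitly, given a complex $K\in\U{\cal C}$, apply $P$ to each $K_n$ to obtain a bicomplex $P(K_\bullet)$, where the $n$th column $P(K_n)$ is a bounded below SR-resolution concentrated in non-negative ``resolution degree''; the horizontal differentials come from applying the functor $P$ to the differentials of $K$ (using that $P$ is a functor, so $P(d)\colon P(K_n)\to P(K_{n-1})$ makes sense), and the vertical differentials are the resolution differentials. Since $P$ sends the zero morphism to zero one checks $d^2=0$ in the totalization after inserting the Koszul signs. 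Define $Q(K)=\tots(P(K_\bullet))$; the natural transformation $Q\to\id$ is induced by the augmentations $P(K_n)\to S^0(K_n)$, which assemble into a map of bicomplexes $P(K_\bullet)\to K$ (viewing $K$ as a bicomplex concentrated in resolution degree $0$) and hence a map $\tots(P(K_\bullet))\to\tots(K)=K$. Functoriality of $Q$ and of $Q\to\id$ is immediate from functoriality of $P$.

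Next I would verify the two bullet points. For the first, I want $Q(K)\to K$ to be a trivial fibration, equivalently (by Fact~\ref{I}) a degreewise epimorphism which is a quasi-isomorphism. Degreewise surjectivity is clear since already $P(K_n)\to S^0(K_n)$ is surjective in resolution degree $0$ (it is an SR-resolution, hence in particular its augmentation is an epimorphism, and in fact one can arrange each $P(K_n)\to S^0 K_n$ to be a degreewise split surjection onto the augmentation term — or simply observe that $Q(K)_m\to K_m$ is split surjective because the resolution-degree-$0$ summand maps onto $K_m$). For the quasi-isomorphism statement I would invoke Lemma~\ref{lem:total-complex-qis}: regard $P(K_\bullet)\to K$ as a morphism of bicomplexes in the Grothendieck abelian category $\psh({\cal C},\Lambda)$, bounded below in the resolution direction, which is a column-wise (\ie{} in each fixed chain degree $n$) quasi-isomorphism by construction of $P$. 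Lemma~\ref{lem:total-complex-qis} then yields that $\tots$ of this morphism is a quasi-isomorphism. For the second bullet, $Q(K)=\tots(P(K_\bullet))$ is a colimit over $n$ of the partial totalizations of the stupid truncations $P(K_\bullet)_{\le n}$ in the resolution direction; each such partial totalization is a finite extension of shifts of the SR-resolutions $P(K_n)$, hence bounded below with projective (indeed semi-representable coproduct) terms in each degree, so projective cofibrant by Fact~\ref{bnd}; the transition maps are degreewise split injective since $P$ takes injective morphisms to degreewise split injections and the inclusion of a stupid truncation is split. Thus by Corollary~\ref{cof-un} $Q(K)$ is projective cofibrant, and by construction every term of $Q(K)$ is a coproduct of representables, \ie{} semi-representable.

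The main obstacle I anticipate is purely bookkeeping rather than conceptual: one must pin down the sign conventions so that $\tots(P(K_\bullet))$ is genuinely a complex and so that the augmentation is a chain map, and one must be slightly careful that the degreewise split surjectivity of $Q(K)\to K$ really holds (this is why it is convenient that $P$ resolves via $K_0=\bigoplus_{K(c)\setminus 0}\Lambda(c)\twoheadrightarrow K$, which is canonically split by choosing for each nonzero section the corresponding coproduct inclusion — giving a functorial, though not additive, splitting). Everything else reduces to the two homological-algebra lemmas already established (Lemmas~\ref{qis-sequential-colimit} and~\ref{lem:total-complex-qis}) together with Fact~\ref{bnd} and Corollary~\ref{cof-un}. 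The last sentence of the proposition, that $Q$ is then a cofibrant replacement functor, is immediate: $Q(K)$ is cofibrant and $Q(K)\to K$ is a trivial fibration, in particular a weak equivalence.
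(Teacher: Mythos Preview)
Your construction of $Q$ (apply $P$ degreewise and take $\tots$) is exactly the paper's, and the surjectivity of $Q(K)\to K$ is fine. However, both your quasi-isomorphism argument and your cofibrancy argument filter in the wrong direction, and neither goes through as stated.

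For the quasi-isomorphism: Lemma~\ref{lem:total-complex-qis} assumes the bicomplex is bounded below in the index $q$ and is a quasi-isomorphism \emph{for each fixed $q$}, i.e.\ in the direction \emph{complementary} to the bounded one. You correctly observe that $P(K_\bullet)$ is bounded below in the resolution direction and that each column $P(K_n)\to S^0K_n$ (fixed chain degree $n$) is a quasi-isomorphism---but these quasi-isomorphisms run \emph{along} the bounded direction, not across it. To invoke the lemma with $q=$ resolution degree you would need each \emph{row} $P_i(K_\bullet)\to(\text{$K$ if $i=0$, else $0$})$ to be a quasi-isomorphism, which is false in general. Swapping the roles and taking $q=$ chain degree fails too, since $K$ is unbounded.

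For cofibrancy: truncating in the resolution direction at level $\leq n$ yields partial totalizations $T_n$ with $(T_n)_m=\bigoplus_{0\le i\le n}P_i(K_{m-i})$; these are degreewise projective but still \emph{unbounded below} whenever $K$ is, so Fact~\ref{bnd} does not apply. Indeed already $T_0=P_0(K_\bullet)$ is an unbounded complex of projectives, exactly the situation where the counterexample you allude to (Hovey~2.3.7) lives. Moreover the successive quotients $T_n/T_{n-1}$ are shifts of the \emph{rows} $P_n(K_\bullet)$, not of the columns $P(K_n)$, so your ``finite extension of shifts of the SR-resolutions'' claim is not correct.

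The paper's fix is to truncate in the \emph{chain} direction via the good truncation $\tau_{\geq -n}K$. Then $Q(\tau_{\geq -n}K)$ is genuinely bounded below with projective terms, hence cofibrant by Fact~\ref{bnd}; the transition maps $Q(\tau_{\geq -n}K)\to Q(\tau_{\geq -(n+1)}K)$ are degreewise split injective because the inclusions $\tau_{\geq -n}K\hookrightarrow\tau_{\geq -(n+1)}K$ are injective and $P$ sends injections to degreewise split injections, so Corollary~\ref{cof-un} applies. Likewise $P(\tau_{\geq -n}K)\to\tau_{\geq -n}K$ is now bounded below in the chain index and a quasi-isomorphism in each column, so Lemma~\ref{lem:total-complex-qis} applies (with $q=$ chain degree); finally Lemma~\ref{qis-sequential-colimit} passes this to the colimit $Q(K)\to K$.
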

\begin{proof}
  Apply the functor $P$ of the previous lemma in each degree,
  obtaining an SR-resolution $P(K_{n})$ of $K_{n}$ for each
  $n\in\Z$. We get maps $P(K_{n})\to P(K_{n-1})$
  of complexes which in total define a bicomplex
  $P(K):=P_{\bullet}(K_{\bullet})$ (since $P$ takes 0 to 0) together
  with a map of bicomplexes $P(K)\to K$, the latter concentrated in
  horizontal degree 0. Taking the total complexes yields a morphism
  \begin{equation}\label{eq:cof-res}
    Q(K):=\tots(P_{\bullet}(K_{\bullet}))\to \tots(K_{\bullet})=K.
  \end{equation}
  Functoriality follows from functoriality in the previous lemma as
  well as functoriality of $\tots$. It remains to prove
  that~(\ref{eq:cof-res}) is a quasi-isomorphism with projective
  cofibrant domain.

  For this let $\tau_{\geq n} K$ ($n\in\Z$) be the subcomplex of $K$
  satisfying
  \begin{align*}
    (\tau_{\geq n} K)_q=
    \begin{cases}
      K_q &: q > n\\
      Z_{n}K &: q = n\\
      0&:q < n.
    \end{cases}
  \end{align*}
  Note that there are canonical morphisms $\tau_{\geq n} K \to
  \tau_{\geq n-1} K$ and the canonical morphism $\varinjlim_{n \in \N}
  \tau_{\geq -n} K \to K$ is an isomorphism. But also
  $\varinjlim_{n\in\N}P(\tau_{\geq -n}K)\to P(K)$ is an isomorphism of
  bicomplexes. Since the total complex functor commutes with colimits
  we conclude that $\varinjlim_{n\in\N}Q(\tau_{\geq -n}K)\to Q(K)$ is
  an isomorphism.

  By the previous lemma, $P(\tau_{\geq -n}K)\to P(\tau_{\geq
    -(n+1)}K)$ is a bidegreewise split injection hence $Q(\tau_{\geq
    -n}K)\to Q(\tau_{\geq -(n+1)}K)$ is a degreewise split
  injection. It follows from Corollary~\ref{cof-un} that $Q(K)$ is
  projective cofibrant. Also by the previous lemma, $P(\tau_{\geq
    -n}K)\to\tau_{\geq -n}K$ is a quasi-isomorphism in each row. It
  follows from Lemma~\ref{lem:total-complex-qis} that $Q(\tau_{\geq
    -n}K)\to\tau_{\geq -n}K$ is a
  quasi-isomorphism. (\ref{eq:cof-res}) being the sequential colimit
  of these morphisms, Lemma~\ref{qis-sequential-colimit} tells us that
  also~(\ref{eq:cof-res}) is a quasi-isomorphism.
\end{proof}

\begin{rem}
  Even if this result is not very useful from a practical point of
  view, it does provide a conceptually satisfying method to compute
  the derived functor of a left dg Kan extension in the
  context of §\ref{sec:explicit-left-dg}. Indeed, fix a functor
  $\gamma:{\cal C}\to \ch({\cal B})$ for a $\Mod{\Lambda}$-cocomplete
  $\Lambda$-linear category ${\cal B}$, and assume that $\gamma^{*}$
  is a left Quillen functor. The image of any $K\in\U{\cal C}$ under
  $\dL \gamma^{*}$ can be computed as follows:
  \begin{enumerate}
  \item Resolve $K$ by a cofibrant complex $QK$ of
    semi-representables.
  \item Apply $\gamma$ to each representable in $QK$ obtaining a
    bicomplex $\gamma(QK)$ in ${\cal B}$.
  \item Take the total complex $\tots(\gamma(QK))$.
  \end{enumerate}
  In particular, this provides a more ``elementary'' description of
  the motivic realization constructed
  in~\cite[\S7]{choudhury-gallauer:iso-galois}.
\end{rem}

\section{Local model structures}
\label{sec:local}
Having dealt with ``generators'' for universal enriched homotopy
theories in §\ref{sec:uni-psh} and for universal dg homotopy theories
in more detail in the subsequent sections, we now turn to
``relations''. The only sort of relations we will be interested in
here are ``topological'', \ie{} induced by a Grothendieck topology on
${\cal C}$. Unfortunately we are not able to prove any substantial
facts in the general enriched setting which is why we again restrict
to the case of dg categories. Here, our main result is completely
analogous to the main result of~\cite{dugger-hollander-isaksen} where
it is shown that a simplicial presheaf in the Jardine local model
structure is fibrant if and only if it is injective fibrant and
satisfies descent with respect to hypercovers.

Throughout this section we assume that ${\cal C}$ is endowed with a
Grothendieck topology $\tau$. Let $\sh_{\tau}({\cal C})$ (resp.\
$\sh_{\tau}({\cal C}, \Lambda)$) denote the category of $\tau$-sheaves
(resp.\ of sheaves of $\Lambda$-modules) on ${\cal C}$. The embedding
$\sh_{\tau}({\cal C}) \to \psh({\cal C})$ (resp.\ $\sh_{\tau}({\cal
  C}, \Lambda) \to \psh({\cal C}, \Lambda)$) is right adjoint to the
sheafification functor $a_{\tau}$.

\subsection{Hypercovers and descent}

Recall (\cite[§3]{dugger-hollander-isaksen}) that a morphism $f$ of
presheaves is a \emph{generalized cover} if its sheafification
$a_{\tau}(f)$ is an epimorphism.

\begin{dfi}
  For any object $c \in {\cal C}$ a \emph{$\tau$-hypercover of $c$} is
  a simplicial presheaf of sets $c_{\bullet}$ on ${\cal C}$ with an
  augmentation map $c_{\bullet} \to c=:c_{-1}$ such that
  \begin{itemize}
  \item $c_n$ is a coproduct of representables for all $n\in\N$, and
  \item $c_{n}\to (\mathrm{cosk}_{n-1}\mathrm{sk}_{n-1}c_{\bullet})_{n}$
    is a generalized cover for all $n\in\N$.
  \end{itemize}
  (To avoid any confusion, the cases $n=0,1$ of the second bullet
  point require $c_{0}\to c$ and $d_{0}\times d_{1}:c_{1}\to
  c_{0}\times_{c}c_{0}$ to be generalized covers, respectively.) A
  \emph{refinement} of a hypercover $c_{\bullet}\to c$ is a hypercover
  $c'_{\bullet}\to c$ together with a morphism of simplicial
  presheaves $c'_{\bullet}\to c_{\bullet}$ compatible with the
  augmentation by $c$. The class of all $\tau$-hypercovers of $c$ is
  denoted by ${\cal H}_{\tau,c}$. Also set ${\cal
    H}_{\tau}:=\coprod_{c\in{\cal C}}{\cal H}_{\tau,c}$. A subclass
  ${\cal H}$ of ${\cal H}_{\tau}$ (resp.\ ${\cal H}_{\tau,c}$) is
  called \emph{dense} if every $\tau$-hypercover (resp.\ of $c$)
  admits a refinement by a hypercover in ${\cal H}$.
\end{dfi}
We refer to~\cite{dugger-hollander-isaksen} for details about
hypercovers. In particular, we recall without proof the following
important fact.
\begin{fac}[{\cite[Pro.~6.7]{dugger-hollander-isaksen}}]\label{dense-subset}
  For every $c\in{\cal C}$, there exists a dense subset of ${\cal
    H}_{\tau,c}$. Therefore also ${\cal H}_{\tau}$ admits a dense
  subset.
\end{fac}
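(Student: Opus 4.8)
The plan is to single out, for a suitable regular cardinal $\kappa$, the class of \emph{$\kappa$-bounded} hypercovers of $c$ — those $c_\bullet\to c$ in which every $c_n$ is a coproduct of fewer than $\kappa$ representables — show it is dense, and observe that up to isomorphism it is a set. Since ${\cal C}$ is small I would fix a regular cardinal $\kappa>|\mathrm{Mor}({\cal C})|$; then the set of morphisms into any object has cardinality $<\kappa$, so every $\tau$-covering sieve is generated by $<\kappa$ morphisms. The technical core is a refinement statement: if $F$ is a presheaf with $\coprod_{z}F(z)$ of cardinality $<\kappa$ and $G\to F$ is a generalized cover, then there is a coproduct of $<\kappa$ representables $V$ with a map $V\to G$ such that $V\to F$ is still a generalized cover. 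To prove this one recalls that a map of presheaves is a generalized cover exactly when it is a local epimorphism, so for each of the $<\kappa$ sections $s\in F(z)$ one picks a covering sieve $R_s$ of $z$ along which $s$ lifts pointwise through $G$, then $<\kappa$ generators of $R_s$ together with chosen lifts (which by Yoneda are maps $y(w)\to G$), and lets these assemble into $V\to G$; regularity of $\kappa$ bounds the number of summands by $<\kappa$, and $V\to F$ is a local epimorphism because each $s$, restricted along $R_s$, lies in the image of $V$.

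With this in hand I would refine an arbitrary hypercover $U_\bullet\to c$ by induction on the simplicial degree, producing at stage $n$ a non-degenerate part $N_n$ (a coproduct of $<\kappa$ representables) and letting $V_\bullet$ be the associated split simplicial presheaf, so that each $V_n=\coprod_{[n]\twoheadrightarrow[k]}N_k$ is again a coproduct of $<\kappa$ representables and carries all degeneracies. Starting from $V_{-1}=c$, suppose the $(n-1)$-truncation $V_{\le n-1}\to U_{\le n-1}$ over $c$ has been built with each $V_k\to M_k^V$ a generalized cover ($M_k^V$ the $k$-th matching object). The induced map on matching objects $M_n^V\to M_n^U$ and the generalized cover $U_n\to M_n^U$ give, by base change, a generalized cover $P_n:=M_n^V\times_{M_n^U}U_n\to M_n^V$; here one uses that $a_\tau$ preserves finite limits and that epimorphisms are stable under base change, so generalized covers are too. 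Since $M_n^V$ is a finite limit of the $V_k$ ($k<n$), each of which has $<\kappa$ sections over every object, and ${\cal C}$ has $<\kappa$ objects, $\coprod_z M_n^V(z)$ has cardinality $<\kappa$; the refinement statement applied to $P_n\to M_n^V$ then yields $N_n\to P_n$ with $N_n$ a coproduct of $<\kappa$ representables and $N_n\to M_n^V$ a generalized cover. The two projections from $P_n$ turn $N_n\to P_n$ into the map $N_n\to M_n^V$ (encoding the faces into $V_{n-1}$) and the refinement map $N_n\to U_n$, compatibly; assembling the $N_n$ into the split simplicial presheaf $V_\bullet$ produces a $\kappa$-bounded hypercover $V_\bullet\to c$ with a morphism $V_\bullet\to U_\bullet$ over $c$, \ie{} a refinement.

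Finally, $\kappa$-bounded hypercovers of $c$ are, up to isomorphism, parametrized by a set — one may take the index sets of the levels to be ordinals $<\kappa$, the chosen representables to be given by functions into $\mathrm{Ob}({\cal C})$, and then the simplicial structure maps and augmentation range over a set — so a set ${\cal H}'_{\tau,c}\subseteq{\cal H}_{\tau,c}$ of representatives is dense. For the last assertion, if ${\cal H}'_{\tau,c}$ is dense in ${\cal H}_{\tau,c}$ for each $c$, then ${\cal H}'_{\tau}:=\coprod_{c\in{\cal C}}{\cal H}'_{\tau,c}$ is a set because ${\cal C}$ is small, and it is dense in ${\cal H}_{\tau}$ since any hypercover lies in some ${\cal H}_{\tau,c}$ and is refined by one in ${\cal H}'_{\tau,c}\subseteq{\cal H}'_{\tau}$. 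The hard part is the inductive step: keeping the matching object small enough for the refinement statement to apply, and arranging the bookkeeping (splitness for the degeneracies, base-change stability of generalized covers, regularity of $\kappa$ for the cardinal sums) so that the pieces really do assemble into a simplicial presheaf that is simultaneously a hypercover of $c$ and a refinement of $U_\bullet$.
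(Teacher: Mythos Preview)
The paper does not give its own proof of this statement: it is explicitly ``recalled without proof'' and attributed to \cite[Pro.~6.7]{dugger-hollander-isaksen}. Your argument is essentially the one given in that reference --- fix a regular cardinal $\kappa$ dominating $|\mathrm{Mor}({\cal C})|$, show $\kappa$-bounded hypercovers form a set up to isomorphism, and refine an arbitrary hypercover by a $\kappa$-bounded (split) one via induction on simplicial degree using stability of local epimorphisms under pullback --- and it is correct. There is nothing to compare against in the present paper; your sketch simply supplies what the authors chose to cite.
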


In the case of simplicial presheaves the $\tau$-hypercovers provide
the ``topological'' relations in that the hypercover $c_{\bullet}$ and
the representable $c$ are ``identified'', and we want to translate
these relations to the setting of presheaves of complexes. For this
notice that given any hypercover $c_{\bullet}\to c$ we can use the
Moore complex (cf.~§\ref{sec:dold-kan}) to obtain an object
$\Lambda(c_{\bullet})\in\U{\cal C}$ together with a morphism
$\Lambda(c_{\bullet})\to\Lambda(c)$. Explicitly,
$\Lambda(c_{\bullet})$ is the complex
\begin{equation*}
  \cdots\to\Lambda(c_{1})\to\Lambda(c_{0})\to 0
\end{equation*}
with differentials given by the alternating sum of the face maps, and
each $\Lambda(c_{i})$ is semi-representable. It follows from
Lemma~\ref{bnd} that $\Lambda(c_{\bullet})$ is projective cofibrant.

\begin{dfi}\mbox{}
  \begin{enumerate}
  \item Let ${\cal S}$ be a class of $\tau$-hypercovers. A presheaf
    $K\in\U{\cal C}$ \emph{satisfies ${\cal S}$-descent} if for any
    $\tau$-hypercover $c_{\bullet}\to c$ in ${\cal S}$,
    \begin{equation*}
      K(c)=\dghom(\Lambda(c),K)\to \dghom(\Lambda(c_{\bullet}),K)=:K(c_{\bullet})
    \end{equation*}
    is a quasi-isomorphism of chain complexes.
  \item $K\in\U{\cal C}$ \emph{satisfies $\tau$-descent} if it
    satisfies ${\cal H}_{\tau}$-descent.
  \end{enumerate}
\end{dfi}
Explicitly, $K(c_{\bullet})$ is given by the product total complex of
the bicomplex
\begin{equation*}
  K(c_{0})\to K(c_{1})\to\cdots,
\end{equation*}
where $K(\coprod_{i\in I}d_{i})$ for $d_{i}\in{\cal C}$ is defined as
$\prod_{i\in I}K(d_{i})$.
\begin{rem}
  The condition of satisfying descent is homotopy invariant, \ie{} given
  two quasi-isomorphic presheaves of complexes, one satisfies ${\cal
    S}$-descent if and only if the other does. Indeed, as we know from
  Fact~\ref{uni-model-v-cat}, $\dghom:(\U{\cal
    C})^{\mathrm{op}}\times\U{\cal C}\to\ch(\Lambda)$ is part of a
  Quillen adjunction of two variables. And since every object in
  $\U{\cal C}$ is fibrant, and since both $\Lambda(c_{\bullet})$ and
  $\Lambda(c)$ are cofibrant (by Fact~\ref{bnd}), the condition on $K$
  to satisfy descent is that
  \begin{equation*}
    \dR\dghom(\Lambda(c),K)\to\dR\dghom(\Lambda(c_{\bullet}),K)
  \end{equation*}
  be an isomorphism in the derived category of $\Lambda$. This is
  different from the situation of simplicial presheaves of sets where
  $c_{\bullet}$ is not necessarily projective cofibrant. Thus the
  interest in \emph{split} hypercovers,
  cf.~\cite[Cor.~9.4]{dugger-universal-hty}.
\end{rem}

We end this section by the following important result. In terminology
to be introduced shortly it tells us that the augmentation morphism
$\Lambda(c_{\bullet})\to \Lambda(c)$ associated to any
$\tau$-hypercover is a $\tau$-local equivalence.

\begin{fac}[{\cite[Thm.~V,~7.3.2]{sga4}}] \label{hypercover-acyclic}
  Any $\tau$-hypercover $c_{\bullet} \to c$ induces identifications
  \begin{equation*}
    a_{\tau}\h_{n}\Lambda(c_{\bullet})\cong
    \begin{cases}
      a_{\tau}\Lambda(c)&:n=0\\
      0&:n\neq 0
    \end{cases}
  \end{equation*}
  in $\sh({\cal C},\Lambda)$.
\end{fac}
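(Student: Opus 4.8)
The plan is to deduce Fact~\ref{hypercover-acyclic} from the classical acyclicity statement for hypercovers in the topos-theoretic setting, applied to the category of abelian sheaves $\sh_\tau({\cal C},\Lambda)$. The reference \cite[Thm.~V,~7.3.2]{sga4} already contains the essential content; what remains is to translate it into the homological-algebra language used here. Concretely, recall that a $\tau$-hypercover $c_\bullet \to c$ becomes, after sheafification, a hypercover of the representable sheaf $a_\tau\Lambda(c)$ (or rather of the sheaf of sets represented by $c$) in the topos $\sh_\tau({\cal C})$; this is because sheafification is exact, preserves colimits, and hence preserves both the "coproduct of representables" condition and the "generalized cover" condition degreewise (a generalized cover is by definition a morphism becoming epic after $a_\tau$).

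First I would observe that the functor $\Lambda(\bullet)$ followed by sheafification sends the simplicial presheaf of sets $c_\bullet$ to the simplicial abelian sheaf $a_\tau\Lambda(c_\bullet)$, which is exactly the free simplicial $\Lambda$-module sheaf on the simplicial sheaf of sets $a_\tau(c_\bullet)$. By the Dold--Kan correspondence (Fact~\ref{moore-normalized}) its normalized/Moore complex computes the homology sheaves $a_\tau\h_n\Lambda(c_\bullet)$, and these agree with the homotopy sheaves $\pi_n$ of the simplicial sheaf $a_\tau\Lambda(c_\bullet)$ of $\Lambda$-modules. Next I would invoke the classical fact that a hypercover in a topos is a local weak equivalence, i.e.\ the augmentation $a_\tau(c_\bullet)\to a_\tau(c)$ induces isomorphisms on all homotopy sheaves: this is precisely (a reformulation of) \cite[Exp.~V]{sga4}. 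Applying the free $\Lambda$-module functor --- which on simplicial sheaves of sets sends local weak equivalences between objects with free homotopy type to homology-sheaf isomorphisms, since the free functor on a weak equivalence of simplicial sets is a quasi-isomorphism after taking Moore complexes, and this can be checked on stalks when enough points exist, or more robustly by a Bousfield--Kan type spectral sequence / skeletal filtration argument valid in any topos --- yields $a_\tau\h_n\Lambda(c_\bullet)\cong a_\tau\h_n\Lambda(c)$, which is $a_\tau\Lambda(c)$ for $n=0$ and $0$ otherwise.

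An alternative, perhaps cleaner, route avoids points entirely: filter $c_\bullet$ by its skeleta and use the defining property $c_n\to(\mathrm{cosk}_{n-1}\mathrm{sk}_{n-1}c_\bullet)_n$ being a generalized cover. After sheafification and passing to $\Lambda$-modules, this says the latching maps are (split, since we may work with the normalized complex) covers, and an induction on the skeletal filtration --- using that $a_\tau$ is exact and the long exact homology sequence of sheaves --- shows the Moore complex of $a_\tau\Lambda(c_\bullet)$ is a resolution of $a_\tau\Lambda(c)$. This is in fact how \cite[Thm.~V,~7.3.2]{sga4} is proved, so the honest thing to do is simply to cite it and spell out the dictionary: "hypercover of $c$ in the sense of the present definition" $\leftrightarrow$ "hypercover of the sheaf represented by $c$ in the sense of \emph{loc.\ cit.}", and "$a_\tau\h_n\Lambda(c_\bullet)$" $\leftrightarrow$ "the $n$-th cohomology object of the associated complex of abelian sheaves".

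The main obstacle is not any deep mathematics --- the statement is genuinely classical --- but rather making the translation precise without reproving \emph{loc.\ cit.}: one must be careful that the Grothendieck topology on ${\cal C}$ need not be subcanonical, so "the sheaf represented by $c$" means $a_\tau\Lambda(c)$ (resp.\ $a_\tau$ of the representable presheaf of sets) rather than $\Lambda(c)$ itself, and that the coproducts in the definition of hypercover are coproducts of representable \emph{presheaves}, which sheafify to coproducts in $\sh_\tau$. Once this bookkeeping is done, the homology computation is immediate from \cite{sga4} together with Fact~\ref{moore-normalized}. Accordingly I would keep the proof short: state the dictionary, invoke \cite[Thm.~V,~7.3.2]{sga4}, and conclude.
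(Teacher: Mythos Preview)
The paper does not prove this statement at all: it is recorded as a \emph{Fact} with the citation to \cite[Thm.~V,~7.3.2]{sga4} and nothing more. Your proposal --- whose bottom line is ``state the dictionary, invoke \cite[Thm.~V,~7.3.2]{sga4}, and conclude'' --- is therefore entirely consistent with the paper's approach, and indeed goes further than the paper does by spelling out the translation between the presheaf-level definition of hypercover used here and the topos-theoretic one in SGA4.
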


\subsection{Localization}

\begin{dfi}
  A morphism $f$ in $\U{\cal C}$ is called a \emph{$\tau$-local
    equivalence} if the induced morphism of homology sheaves
  $a_{\tau}\h_n(f)$ is an isomorphism for all $n \in \Z$.
\end{dfi}

The goal of this section is to prove the following theorem.
\begin{thm}\label{local-model-structure}
  The left Bousfield localization $\U{\cal C}/\tau$ of $\U{\cal C}$
  with respect to $\tau$-local equivalences exists and satisfies:
  \begin{enumerate}
  \item The underlying category of $\U{\cal C}/\tau$ is the one of
    $\U{\cal C}$. The cofibrations are also the same. The weak
    equivalences are the $\tau$-local equivalences.
  \item $\U{\cal C}/\tau$ is a proper, tractable, cellular, stable
    model category.
  \item The fibrations of $\U{\cal C}/\tau$ are the fibrations of
    $\U{\cal C}$ whose kernel satisfies $\tau$-descent. In particular,
    the fibrant objects of $\U{\cal C}/\tau$ are the objects
    satisfying $\tau$-descent.\label{local-fibrations}
  \end{enumerate}
  The model structure on $\U{\cal C}/\tau$ is called the
  \emph{$\tau$-local model structure}.
\end{thm}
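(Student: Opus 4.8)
The plan is to establish the existence of the left Bousfield localization first, then identify the fibrations, with the descent characterization of fibrant objects as the real payoff.

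\textbf{Existence and basic properties.} Since $\U{\cal C}$ is left proper and cellular (Corollary~\ref{dg-model-prop}), the theory of left Bousfield localization (\cite[Thm.~4.1.1]{hirschhorn:model-cat-loc}) applies to localize at \emph{any} set of maps. By Fact~\ref{dense-subset} there is a \emph{set} $S$ of representing hypercovers whose associated augmentations $\Lambda(c_\bullet)\to\Lambda(c)$ will serve as the localizing set; one must check that the $S$-local equivalences coincide with the $\tau$-local equivalences. One inclusion is Fact~\ref{hypercover-acyclic} (every hypercover augmentation is a $\tau$-local equivalence, hence in particular every $S$-equivalence between $S$-local objects is detected, and conversely $S$-equivalences are $\tau$-local equivalences by a descent-spectral-sequence argument). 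For the other inclusion one uses that the homology sheaf functors $a_\tau\h_n$ are precisely the homotopy groups of a fibrant-replacement-type object and that $S$-local objects are exactly those satisfying $S$-descent; this is where the Dold-Kan comparison of Proposition~\ref{dold-kan} with the Dugger--Hollander--Isaksen result for simplicial presheaves enters. The statements about properness, tractability, cellularity, and stability are inherited: left properness and cellularity are automatic for left Bousfield localizations, stability is preserved because localization is by a set of maps closed under the shift, and tractability follows from \cite{barwick-modcat-bousfield}. Part (1) is then formal: the underlying category and cofibrations are unchanged by any left Bousfield localization, and the weak equivalences are by construction the $\tau$-local equivalences once the identification $S$-equiv $=$ $\tau$-local equiv is in hand.

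\textbf{Fibrations and the descent characterization.} For part (3), recall that since $\U{\cal C}$ is stable, so is the localization, and in a stable model category a map is a fibration iff it is a fibration in the underlying category (here: objectwise epimorphism) \emph{and} its fibre is a local-fibrant object — equivalently its kernel $L$ satisfies: $L$ is fibrant in $\U{\cal C}/\tau$. So the whole content reduces to identifying the \emph{fibrant} objects of $\U{\cal C}/\tau$ as those $K$ satisfying $\tau$-descent. By general localization theory, $K$ is $\tau$-local-fibrant iff $K$ is fibrant in $\U{\cal C}$ (always true) and $S$-local, i.e. $\dR\dghom(\Lambda(c),K)\to\dR\dghom(\Lambda(c_\bullet),K)$ is an isomorphism in $\D(\Lambda)$ for every hypercover in the dense set $S$. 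By the Remark following the descent definition this derived statement is exactly the (strict) quasi-isomorphism $K(c)\to K(c_\bullet)$, so $K$ is $S$-local iff $K$ satisfies $S$-descent. Finally one promotes $S$-descent to full $\tau$-descent: if $c'_\bullet\to c_\bullet$ is a refinement, a comparison argument (again via Dold-Kan and the known simplicial statement, or directly via Fact~\ref{hypercover-acyclic} applied to the relative hypercover) shows $K(c_\bullet)\to K(c'_\bullet)$ is a quasi-isomorphism, so descent along a dense class implies descent along all hypercovers. The kernel statement then follows by taking the long exact sequence of homology sheaves of $0\to L\to K\to K'\to 0$ together with the two-out-of-three-type observation that $K\to K'$ is a local fibration iff it is an objectwise epi with $L$ satisfying descent.

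\textbf{Main obstacle.} The crux is the identification of $S$-local equivalences with $\tau$-local equivalences — equivalently, that $S$-descent (descent along a dense set of hypercovers) is strong enough to force the homology-sheaf condition. Here one cannot argue purely formally: one genuinely needs to transport the Dugger--Hollander--Isaksen theorem through the Dold-Kan Quillen adjunction of Proposition~\ref{dold-kan}, keeping careful track of the fact that in the chain-complex world $\Lambda(c_\bullet)$ is already cofibrant (unlike simplicial presheaves), so that the naive mapping complex computes the derived one. I would structure this as: (i) prove the simplicial statement implies that a presheaf satisfying hypercover descent has the right homotopy sheaves; (ii) apply $N\Lambda$ and $\Gamma\tau_{\geq 0}$ and use Fact~\ref{moore-normalized}(4) relating $\pi_n\Gamma$ with $\h_n$ to translate homotopy-sheaf conditions into homology-sheaf conditions; (iii) handle unboundedness by a truncation/limit argument as in §\ref{sec:cof}. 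Everything else is bookkeeping with standard Bousfield-localization machinery.
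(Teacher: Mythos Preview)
Your proposal is essentially correct and follows the same overall strategy as the paper: localize at a dense \emph{set} of hypercovers, identify the resulting model structure with the $\tau$-local one by transporting the Dugger--Hollander--Isaksen theorem through the Dold--Kan adjunction, and read off the properties from standard Bousfield-localization machinery (with the fibration characterization coming from Barnes--Roitzheim's stable-localization results).

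Two differences are worth flagging. First, the paper does not attempt to bootstrap the existence of $\U{\cal C}/\tau$ from the set-localization alone; it imports the existence and the identification of weak equivalences from Ayoub (see Remark~\ref{existence-local-model-structure} and \cite[Pro.~4.4.32]{ayoub07-thesis}), and then shows the two localizations coincide. Second, the technical crux in the paper is not a direct ``$S$-descent $\Rightarrow$ correct homology sheaves'' argument as you sketch, but a lifting lemma (Lemma~\ref{descent-to-local}): an ${\cal S}$-fibration between ${\cal S}$-fibrant objects which is a $\tau$-local equivalence is a sectionwise trivial fibration, proved by translating the lifting problem against $\partial\simplex^{n}\Lambda(c)\to\simplex^{n}\Lambda(c)$ into simplicial presheaves via the adjunction and invoking \cite[Lem.~6.5]{dugger-hollander-isaksen}. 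Combined with \cite[Lem.~6.4]{dugger-hollander-isaksen} this yields the identification of weak equivalences. Your refinement argument for promoting $S$-descent to $\tau$-descent is slightly circular as stated (you need $K$ to already see $\Lambda(c'_{\bullet})\to\Lambda(c_{\bullet})$ as a quasi-isomorphism); the paper sidesteps this by first proving the model structures coincide and then applying Lemma~\ref{local=descent} with ${\cal S}={\cal H}_{\tau}$.
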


\begin{rem}\label{existence-local-model-structure}
  This result was originally one of our main motivations to write the
  present note. The existence of this localization was known
  before, see~\cite[Def.~4.4.34]{ayoub07-thesis}, and we use this
  result in our proof. The main point of the theorem for us was
  part~\ref{local-fibrations}. The analogous description of the
  fibrant objects for simplicial sets instead of chain complexes is of
  course the main result of~\cite{dugger-hollander-isaksen}, and we
  deduce our result from theirs.

  After having completed this note, we learned that also
  part~\ref{local-fibrations} had appeared in the literature before,
  see~\cite{hinich:def-sh-alg}. His proof is different from ours in
  that he does not reduce to the case of simplicial sets nor uses the
  theory of Bousfield localizations but proves the axioms of a model
  structure ``by hand''.
\end{rem}

Let ${\cal S}\subset{\cal H}_{\tau}$ be some class of
$\tau$-hypercovers. We denote by $\Lambda({\cal S})[\Z]$ the class
\begin{equation*}
  \{\Lambda(c_{\bullet})[n]\to
  \Lambda(c)[n]\mid c_{\bullet}\to c\in{\cal S}, n\in\Z\}
\end{equation*}
of morphisms in $\U{\cal C}$.

\begin{dfi}\mbox{}
  \begin{enumerate}
  \item Recall (\cite[Def.~3.1.4]{hirschhorn:model-cat-loc}) that an
    object $K$ in $\U{\cal C}$ is called \emph{local} with respect to
    a class of morphisms ${\cal F}$ in $\U{\cal C}$ if for each $f\in
    {\cal F}$, the induced morphism of homotopy function complexes
    $\hmap(f,K)$ is a weak homotopy equivalence of simplicial
    sets.
  \item Let ${\cal S}$ be a class of $\tau$-hypercovers. We say that
    $K\in\U{\cal C}$ is \emph{${\cal S}$-local} if it is local with
    respect to $\Lambda({\cal S})[\Z]$.
  \item We say that $K\in\U{\cal C}$ is \emph{$\tau$-local} if it is
    ${\cal H}_{\tau}$-local.
  \end{enumerate}
\end{dfi}

\begin{lem}\label{local=descent}
  For a presheaf of complexes $K \in \U{\cal C}$ and a class ${\cal
    S}$ of $\tau$-hypercovers the following two conditions are
  equivalent:
  \begin{enumerate}
  \item $K$ is ${\cal S}$-local.
  \item $K$ satisfies ${\cal S}$-descent.
  \end{enumerate}
  In particular, the following two conditions are equivalent:
  \begin{enumerate}
  \item $K$ is $\tau$-local.
  \item $K$ satisfies $\tau$-descent.
  \end{enumerate}
\end{lem}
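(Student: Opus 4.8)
The plan is to unwind both conditions via the homotopy function complex computed in Lemma~\ref{dg-hty-cx} and to reduce everything to a statement about homology. Fix a class ${\cal S}$ of $\tau$-hypercovers and $K\in\U{\cal C}$. Recall that $\Lambda(c_{\bullet})$ and $\Lambda(c)$ are projective cofibrant (by Fact~\ref{bnd}, as noted just after the definition of hypercovers), so for every $f=(\Lambda(c_{\bullet})[n]\to\Lambda(c)[n])$ in $\Lambda({\cal S})[\Z]$ the induced map of homotopy function complexes $\hmap(f,K)$ can be modelled, by Lemma~\ref{dg-hty-cx}, by
\begin{equation*}
  \Gamma\tau_{\geq 0}\dghom(\Lambda(c)[n],K)\to\Gamma\tau_{\geq 0}\dghom(\Lambda(c_{\bullet})[n],K).
\end{equation*}
Since $\Gamma$ sends quasi-isomorphisms of connective complexes to weak equivalences of simplicial sets (Fact~\ref{moore-normalized}), and since $\tau_{\geq 0}$ together with $\Gamma$ detect and reflect weak equivalences on non-negative homology, $\hmap(f,K)$ is a weak homotopy equivalence for \emph{all} integers $n$ (as $n$ ranges over $\Z$ we see all degrees) precisely when
\begin{equation*}
  \dghom(\Lambda(c),K)\to\dghom(\Lambda(c_{\bullet}),K)
\end{equation*}
is a quasi-isomorphism of chain complexes, i.e.\ precisely when $K$ satisfies ${\cal S}$-descent with respect to the hypercover $c_{\bullet}\to c$. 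Running this over all hypercovers in ${\cal S}$ gives the equivalence (1)$\Leftrightarrow$(2).

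First I would make precise the passage ``$\hmap(f,K)$ a weak equivalence for all $n\in\Z$ iff the $\dghom$-map is a quasi-isomorphism''. One direction is immediate: a quasi-isomorphism of complexes induces, after $\dghom$ and after shifting by $[n]$, a quasi-isomorphism, hence after $\Gamma\tau_{\geq 0}$ a weak equivalence. For the converse, note $\dghom(\Lambda(c)[n],K)\cong\dghom(\Lambda(c),K)[-n]$ (and similarly for $c_{\bullet}$), so knowing that $\Gamma\tau_{\geq 0}$ of the shifted map is a weak equivalence tells us the map is an isomorphism on $\h_{m}$ for $m\geq -n$; letting $n\to\infty$ forces it to be a quasi-isomorphism. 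This uses only Fact~\ref{moore-normalized}(4) ($\pi_{m}\Gamma\cong\h_{m}$) and the fact that $\tau_{\geq 0}$ does not alter homology in non-negative degrees.

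The second part of the lemma, the equivalences with $\tau$-local and $\tau$-descent, is then just the special case ${\cal S}={\cal H}_{\tau}$ of the first part, given the definitions. I do not anticipate a genuine obstacle here; the one point requiring slight care is the bookkeeping of shifts and truncations — making sure that ranging over all $n\in\Z$ in $\Lambda({\cal S})[\Z]$ really does capture quasi-isomorphism in \emph{every} degree and not merely in non-negative degrees, which is exactly why the class $\Lambda({\cal S})[\Z]$ was defined with all integer shifts rather than just the identity shift. Everything else is a formal consequence of Lemma~\ref{dg-hty-cx} and the Dold–Kan dictionary recalled in §\ref{sec:dold-kan}.
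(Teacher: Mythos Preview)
Your proposal is correct and follows essentially the same approach as the paper: both use Lemma~\ref{dg-hty-cx} to identify $\hmap(\Lambda(c)[n],K)$ with $\Gamma\tau_{\geq 0}\dghom(\Lambda(c)[n],K)$, then observe that as $n$ ranges over $\Z$ the resulting homotopy groups are exactly the maps $\h_{k}K(c)\to\h_{k}K(c_{\bullet})$ for all $k\in\Z$. The paper's version is simply terser, computing the $m$th homotopy group directly as $\h_{m-n}K(c)\to\h_{m-n}K(c_{\bullet})$ without spelling out the two implications separately.
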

\begin{proof}
  $K$ is ${\cal S}$-local if and only if for any $c_{\bullet}\to c\in{\cal S}$,
  $n\in\Z$, the morphism of homotopy function complexes
  \begin{equation}\label{eq:local-hypercover}
    \hmap(\Lambda(c)[n],K)\to \hmap(\Lambda(c_{\bullet})[n],K)
  \end{equation}
  is a weak equivalence of simplicial sets. But
  $\hmap(A,B)\cong \Gamma\tau_{\geq 0}\U{\cal C}(A,B)$ by
  Lemma~\ref{dg-hty-cx}. So~(\ref{eq:local-hypercover}) is identified
  with
  \begin{equation*}
    \Gamma\tau_{\geq -n}K(c)\to \Gamma\tau_{\geq -n}K(c_{\bullet}),
  \end{equation*}
  whose $m$-th homotopy group is thus
  \begin{equation*}
    \h_{m-n}K(c)\to \h_{m-n}K(c_{\bullet}).\qedhere
  \end{equation*}
\end{proof}

We will deduce Theorem~\ref{local-model-structure} from the following
(cf.~\cite[Thm.~6.2]{dugger-hollander-isaksen}).
\begin{thm}\label{hypercover-localization}
  Let ${\cal S}$ be a class of $\tau$-hypercovers which contains a
  dense subset. Then the left Bousfield localization $\U{\cal C}/{\cal
    S}$ of $\U{\cal C}$ with respect to $\Lambda({\cal S})[\Z]$
  exists and coincides with $\U{\cal C}/\tau$.
\end{thm}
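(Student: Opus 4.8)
The plan is to deduce the statement from the already-known existence of $\U{\cal C}/\tau$ (Remark~\ref{existence-local-model-structure}), by realizing both model structures as left Bousfield localizations of $\U{\cal C}$ and comparing their classes of weak equivalences. The comparison passes through the descent reformulation of Lemma~\ref{local=descent}, and for the one nontrivial inclusion through the Dold--Kan reduction to the simplicial statement of Dugger--Hollander--Isaksen.

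\emph{Existence.} By Fact~\ref{dense-subset} fix a dense \emph{set} ${\cal S}_{0}\subseteq{\cal S}$ of $\tau$-hypercovers. Since $\U{\cal C}$ is left proper, cellular and combinatorial (Corollary~\ref{dg-model-prop}), the left Bousfield localization $L_{0}:=L_{\Lambda({\cal S}_{0})[\Z]}\U{\cal C}$ at the \emph{set} $\Lambda({\cal S}_{0})[\Z]$ exists; it is again left proper, cellular and combinatorial, it is stable because $\Lambda({\cal S}_{0})[\Z]$ is closed under suspension and desuspension, and right properness follows as well, which settles item~(2). As every object of $\U{\cal C}$ is fibrant, the fibrant objects of $L_{0}$ are precisely the $\Lambda({\cal S}_{0})[\Z]$-local ones. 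To justify that $L_{0}$ is the localization $\U{\cal C}/{\cal S}$ at the possibly proper class $\Lambda({\cal S})[\Z]$, it suffices by Lemma~\ref{local=descent} to show that ${\cal S}_{0}$-descent implies full $\tau$-descent. This \emph{density step} is carried out as follows: given any hypercover $c_{\bullet}\to c$, choose a refinement $c'_{\bullet}\to c_{\bullet}$ with $c'_{\bullet}\to c$ in ${\cal S}_{0}$; since any two refinements of $c_{\bullet}$ admit a common refinement, and two refinements over a fixed hypercover agree up to simplicial homotopy, the hypercovers of $c$ form a cofiltered system on which $c_{\bullet}\mapsto\dghom(\Lambda(c_{\bullet}),K)$ becomes a filtered system of complexes; cofinality of ${\cal S}_{0}$, the hypothesis on $K$, and simplicial-homotopy invariance of $\dghom(-,K)$ then force $K(c)\to K(c_{\bullet})$ to be a quasi-isomorphism. (Alternatively this is imported from~\cite{dugger-hollander-isaksen} through Proposition~\ref{dold-kan}.)

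\emph{Comparison with $\U{\cal C}/\tau$.} Both $L_{0}$ and $\U{\cal C}/\tau$ are left Bousfield localizations of $\U{\cal C}$ with the same cofibrations, so it is enough to match their weak equivalences. One inclusion is immediate: by Fact~\ref{hypercover-acyclic} together with the exactness of $a_{\tau}$, every map $\Lambda(c_{\bullet})[n]\to\Lambda(c)[n]$ is a $\tau$-local equivalence between cofibrant objects, so $\U{\cal C}/\tau$ inverts $\Lambda({\cal S}_{0})[\Z]$ and the universal property of left Bousfield localization provides a left Quillen functor $L_{0}\to\U{\cal C}/\tau$ which is the identity on underlying categories; hence every $L_{0}$-weak equivalence is a $\tau$-local equivalence. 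For the converse I must show that every $\tau$-local equivalence $f$ is inverted by $L_{0}$; equivalently, since all objects are fibrant, that $\dghom(f,K)$ is a quasi-isomorphism for every $\Lambda({\cal S}_{0})[\Z]$-local $K$. Such a $K$ satisfies full $\tau$-descent by Lemma~\ref{local=descent} and the density step. Writing $C$ for the cone of $f$, which is $\tau$-locally acyclic (\ie{} $a_{\tau}\h_{n}C=0$ for all $n$), and using Corollary~\ref{dg-hty-classes}, the claim reduces to showing that $\hom_{\Ho(\U{\cal C})}(C,K[n])=\h_{n}\dghom(C,K)$ vanishes for all $n\in\Z$ whenever $C$ is $\tau$-locally acyclic and $K$ satisfies $\tau$-descent. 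I would prove this by applying $\Gamma\tau_{\geq0}$, together with Proposition~\ref{dold-kan} and Lemma~\ref{dg-hty-cx}, to transport the question to simplicial presheaves, where it becomes the assertion that a simplicial presheaf satisfying hyperdescent is Jardine-local, \ie{} the main theorem of~\cite{dugger-hollander-isaksen}.

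\emph{Main obstacle.} The crux is exactly this last reduction. The difficulty is that $\U{\cal C}/\tau$ is \emph{defined} as a localization at the proper class of $\tau$-local equivalences rather than at a set, so one cannot simply invoke ``the same generators give the same localization''; one genuinely has to prove that objects satisfying $\tau$-descent detect all $\tau$-local equivalences, and the cleanest route is to transport this, via Dold--Kan, to simplicial presheaves where it is known. The density step is a second, more elementary but still essential, ingredient, and it is the only place where the hypothesis that ${\cal S}$ contains a dense subset is used. Theorem~\ref{local-model-structure} then follows by taking ${\cal S}={\cal H}_{\tau}$: parts~(1) and~(2) are as above, and part~(\ref{local-fibrations}) holds because the fibrant objects of $\U{\cal C}/\tau=\U{\cal C}/{\cal H}_{\tau}=L_{0}$ are the $\Lambda({\cal H}_{\tau})[\Z]$-local objects, which by Lemma~\ref{local=descent} are exactly the objects satisfying $\tau$-descent.
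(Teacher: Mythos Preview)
Your overall strategy—pick a dense subset, compare weak equivalences, and reduce the hard inclusion to \cite{dugger-hollander-isaksen} via Dold--Kan—is exactly the paper's. The execution, however, differs in two places, and one of them contains a genuine gap.

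\textbf{The density step.} Your cofinality argument does not establish that ${\cal S}_{0}$-descent implies $\tau$-descent. Given an arbitrary hypercover $c_{\bullet}\to c$ and a refinement $c'_{\bullet}\to c_{\bullet}$ with $c'_{\bullet}\to c\in{\cal S}_{0}$, you obtain a factorization $K(c)\to K(c_{\bullet})\to K(c'_{\bullet})$ whose composite is a quasi-isomorphism. But cofinality and simplicial-homotopy invariance only control the \emph{colimit} of the $K(c_{\bullet})$; they do not force the individual map $K(c_{\bullet})\to K(c'_{\bullet})$ to be a quasi-isomorphism, which is what you need. (The claim is in fact true, but only as a \emph{consequence} of the theorem, not as an input to it.) The paper sidesteps this entirely: it never proves the density step directly. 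Instead it shows first that ${\cal S}'$-local equivalences coincide with $\tau$-local equivalences, and then observes that every map in $\Lambda({\cal S})[\Z]$ is a $\tau$-local equivalence (Fact~\ref{hypercover-acyclic}), hence an ${\cal S}'$-local equivalence, so $\U{\cal C}/{\cal S}$ exists and equals $\U{\cal C}/{\cal S}'$ automatically.

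\textbf{The hard inclusion.} Here your idea (transport via $\Gamma\tau_{\geq 0}$ and invoke the main theorem of \cite{dugger-hollander-isaksen}) is morally right, but the paper packages it more concretely. Rather than working with homotopy function complexes and a $\tau$-locally acyclic cone, the paper isolates the key step as Lemma~\ref{descent-to-local}: an ${\cal S}$-fibration which is a $\tau$-local equivalence between ${\cal S}$-fibrant objects is a projective trivial fibration. This is proved by a direct lifting argument against the generating cofibrations $\partial\simplex^{n}\Lambda(c)\to\simplex^{n}\Lambda(c)$, which (after a shift) are images under $N\Lambda$ of simplicial cofibrations; the Dold--Kan adjunction of Lemma~\ref{dold-kan-local} then converts the lifting problem into one in simplicial presheaves, where \cite[Lem.~6.5]{dugger-hollander-isaksen} applies. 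Combined with the abstract criterion \cite[Lem.~6.4]{dugger-hollander-isaksen}, this yields the equality of weak equivalences without ever needing to know that ${\cal S}_{0}$-descent implies $\tau$-descent. Your homotopy-function-complex formulation can be made to work along similar lines, but as written it presupposes the problematic density step and leaves the crucial translation (why does dg $\tau$-descent for $K$ give simplicial Jardine-locality for $\Gamma\tau_{\geq n}K$?) unproved.
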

\begin{proof}[Proof of Theorem~\ref{local-model-structure}]Let ${\cal
    S}$ be the class of all $\tau$-hypercovers. By
  Fact~\ref{dense-subset}, ${\cal S}$ satisfies the assumption of
  Theorem~\ref{hypercover-localization}. We know from
  Corollary~\ref{dg-model-prop} that $\U{\cal C}$ is left-proper,
  tractable, cellular. These are preserved by left Bousfield
  localizations
  by~\cite[Thm.~4.1.1]{hirschhorn:model-cat-loc} 
  and~\cite[Pro.~4.3]{hovey:comodules-hopf-algebroid}. Since ${\cal
    S}$-local objects are closed under shifts by
  Lemma~\ref{local=descent}, $\U{\cal C}/{\cal S}$ and therefore
  $\U{\cal C}/\tau$ are stable model categories
  (see~\cite[Pro.~3.6]{barnes-roitzheim:bousfield-loc}). Since
  $\U{\cal C}$ is a right proper model category so is $\U{\cal
    C}/\tau$ by~\cite[Pro.~3.7]{barnes-roitzheim:bousfield-loc}. Since
  all objects are fibrant in $\U{\cal C}$, the fibrant objects of
  $\U{\cal C}/{\cal S}$ are the $\tau$-local objects. We deduce from
  Lemma~\ref{local=descent} and Theorem~\ref{hypercover-localization}
  that the fibrant objects of $\U{\cal C}/\tau$ are precisely the
  presheaves satisfying $\tau$-descent. The description of the
  fibrations in $\U{\cal C}/\tau$ then follows from this
  and~\cite[Lem.~3.9]{barnes-roitzheim:bousfield-loc}. Finally, that
  the weak equivalences of $\U{\cal C}/\tau$ are the $\tau$-local
  equivalences is proven in~\cite[Pro.~4.4.32]{ayoub07-thesis}.
\end{proof}

Assume for the moment that ${\cal S}$ in
Theorem~\ref{hypercover-localization} is a set. In this case we know
that the left Bousfield localization $\U{\cal C}/{\cal S}$ (resp.\
$\simp^{\mathrm{op}}\psh({\cal C})/{\cal S}$) with respect to
$\Lambda({\cal S})[\Z]$ (resp.\ ${\cal S}$) exists. Temporarily, we
call these model structures the ${\cal S}$-local model structures,
their fibrations are called ${\cal S}$-fibrations, their weak
equivalences are called ${\cal S}$-equivalences.
\begin{lem}\label{dold-kan-local}
  The Dold-Kan correspondence (Proposition~\ref{dold-kan}) induces a
  Quillen adjunction
  \begin{equation*}
    (N\Lambda,\Gamma\tau_{\geq 0}):\simp^{\mathrm{op}}\psh({\cal
      C})/{\cal S}\longrightarrow \U{\cal C}/{\cal S}.
  \end{equation*}
  Moreover, $\Gamma\tau_{\geq 0}$ preserves $\tau$-local equivalences.
\end{lem}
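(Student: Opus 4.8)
The plan is to establish the Quillen adjunction by checking that $\Gamma\tau_{\geq 0}$ is a right Quillen functor for the ${\cal S}$-local structures, using the criterion that a right Quillen functor between left Bousfield localizations (of the same underlying Quillen adjunction) is automatically right Quillen as soon as it preserves fibrant objects — equivalently, since we already have the underlying Quillen adjunction from Proposition~\ref{dold-kan}, it suffices to show that its left adjoint $N\Lambda$ sends the localizing morphisms ${\cal S}$ (more precisely, their cofibrant models) into $\Lambda({\cal S})[\Z]$-local equivalences in $\U{\cal C}/{\cal S}$. This is the standard way localizations are compatible: if $F : {\cal M} \rightleftarrows {\cal N} : G$ is a Quillen adjunction and $\dL F$ takes the ${\cal S}$-local equivalences of ${\cal M}$ to the ${\cal T}$-local equivalences of ${\cal N}$, then $(F,G)$ descends to a Quillen adjunction ${\cal M}/{\cal S} \to {\cal N}/{\cal T}$ (cf.~\cite[Thm.~3.3.20]{hirschhorn:model-cat-loc}).

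First I would identify, for a hypercover $c_\bullet \to c$ in ${\cal S}$, the image $N\Lambda(c_\bullet) \to N\Lambda(c)$ under the left adjoint. The free $\Lambda$-module functor sends the simplicial presheaf $c_\bullet$ to the simplicial presheaf of $\Lambda$-modules $\Lambda(c_\bullet)$, and $N$ then produces the normalized chain complex, which by Fact~\ref{moore-normalized}(1) is chain homotopy equivalent to the Moore complex — that is, to the object also denoted $\Lambda(c_\bullet) \in \U{\cal C}$ appearing in the definition of $\Lambda({\cal S})[\Z]$. Since $c_\bullet$ is levelwise a coproduct of representables, it is already cofibrant in the (projective) model structure on simplicial presheaves, so no cofibrant replacement intervenes and $\dL(N\Lambda)(c_\bullet \to c)$ is, up to the quasi-isomorphism above, exactly the generator $\Lambda(c_\bullet) \to \Lambda(c)$ of $\Lambda({\cal S})[\Z]$. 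Such a map is tautologically an ${\cal S}$-local equivalence in $\U{\cal C}/{\cal S}$. By the general theory of left Bousfield localization, the ${\cal S}$-local equivalences of $\simp^{\mathrm{op}}\psh({\cal C})$ are generated (as a class closed under the appropriate homotopical operations) by ${\cal S}$ together with the objectwise equivalences; since $\dL(N\Lambda)$ already sends objectwise equivalences to objectwise (hence ${\cal S}$-local) equivalences, it follows that it sends all ${\cal S}$-local equivalences to ${\cal S}$-local equivalences, giving the desired Quillen adjunction.

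For the last sentence, that $\Gamma\tau_{\geq 0}$ preserves $\tau$-local equivalences, I would argue as follows. The functor $\Gamma\tau_{\geq 0} : \U{\cal C} \to \simp^{\mathrm{op}}\psh({\cal C})$ acts objectwise, and on each object $c$ it is the composite of the good truncation $\tau_{\geq 0}$ with the Dold-Kan functor $\Gamma$; by Fact~\ref{moore-normalized}(4) we have $\pi_n\Gamma\tau_{\geq 0} K(c) \cong \h_n K(c)$ for $n \geq 0$. Now a $\tau$-local equivalence $f$ in $\U{\cal C}$ is by definition a map with $a_\tau \h_n(f)$ an isomorphism for all $n$; applying $\Gamma\tau_{\geq 0}$ and sheafifying, the induced map on homotopy sheaves is $a_\tau \h_n(f)$ in nonnegative degrees, hence an isomorphism, so $\Gamma\tau_{\geq 0}(f)$ is a $\tau$-local (i.e.\ local) weak equivalence of simplicial presheaves. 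The one point requiring a little care is the passage between the two formulations of "$\tau$-local equivalence" on the simplicial side — isomorphism on homotopy sheaves versus being a weak equivalence in Jardine's local model structure — but these agree, and the negative-degree homology (which $\tau_{\geq 0}$ discards) is irrelevant on the simplicial side since simplicial presheaves only see nonnegative homotopy.

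The main obstacle I anticipate is the bookkeeping in the first part: making precise the claim that $\dL(N\Lambda)$ preserves ${\cal S}$-local equivalences rather than merely sending the distinguished generators ${\cal S}$ to ${\cal S}$-local equivalences. The clean route is to invoke \cite[Thm.~3.3.20]{hirschhorn:model-cat-loc} (or \cite[Prop.~3.6.1]{hirschhorn:model-cat-loc}), which says exactly that a Quillen adjunction between unlocalized categories descends to the localizations provided the total left derived functor of $F$ carries the localizing set on the source to ${\cal T}$-local equivalences on the target — one does \emph{not} need to check preservation of the whole class of ${\cal S}$-local equivalences by hand. With that reduction in place, the verification is precisely the identification $\dL(N\Lambda)(c_\bullet \to c) \simeq (\Lambda(c_\bullet) \to \Lambda(c))$ carried out above, together with the observation that this target lies in $\Lambda({\cal S})[\Z]$ (the shift by $0$), which is part of the localizing set for $\U{\cal C}/{\cal S}$.
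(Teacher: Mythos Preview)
Your approach is essentially the paper's: show that $N\Lambda$ takes each hypercover $c_\bullet \to c$ in ${\cal S}$ to a map quasi-isomorphic to the generator $\Lambda(c_\bullet) \to \Lambda(c)$ (via the Moore--normalized comparison of Fact~\ref{moore-normalized}), then invoke the universal property of Bousfield localization; and for the second claim, identify the homotopy presheaves of $\Gamma\tau_{\geq 0}K$ with the nonnegative homology presheaves of $K$.

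One slip worth flagging: you assert that $c_\bullet$, being levelwise a coproduct of representables, is already cofibrant in the projective model structure on $\simp^{\mathrm{op}}\psh({\cal C})$. This is false in general --- the paper itself remarks (just after the definition of ${\cal S}$-descent) that hypercovers need not be projective cofibrant on the simplicial side, which is precisely why split hypercovers are singled out there. Fortunately the error is harmless for your argument: $N\Lambda$ preserves \emph{all} objectwise weak equivalences (it acts sectionwise, and on each section every simplicial set is cofibrant in $\sset$), so $\dL(N\Lambda)$ agrees with $N\Lambda$ on every object and no cofibrant replacement is needed. The paper's proof accordingly just applies $N\Lambda$ to $f$ directly, without any mention of cofibrancy.
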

\begin{proof}
  Given $f:c_{\bullet}\to c\in{\cal S}$, the morphism $N\Lambda(f)$
  factors as
  \begin{equation*}
    N\Lambda(c_{\bullet})\to\Lambda(c_{\bullet})\xrightarrow{\Lambda(f)} \Lambda(c),
  \end{equation*}
  where the first arrow is a quasi-isomorphism by
  Fact~\ref{moore-normalized}, and the second arrow lies in
  $\Lambda({\cal S})[\Z]$. Thus the first claim follows from the
  universal property of localizations. The second claim is also
  evident since the homotopy groups of $\Gamma \tau_{\geq 0}K$ are the
  homology groups of $K$ in non-negative degrees, by
  Fact~\ref{moore-normalized}.
\end{proof}

\begin{lem}\label{descent-to-local}
  Let $K, K' \in \U{\cal C}$ be ${\cal S}$-fibrant objects and let $f
  : K \to K'$ be an ${\cal S}$-fibration which is also a $\tau$-local
  weak equivalence. Then $f$ is a sectionwise trivial fibration, \ie{}
  it is a trivial fibration in the projective model structure on
  $\U{\cal C}$.
\end{lem}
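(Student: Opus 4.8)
The plan is to show that $f\colon K\to K'$, being an $\mathcal S$-fibration and a $\tau$-local weak equivalence between $\mathcal S$-fibrant objects, is in fact a trivial fibration in the projective model structure on $\U{\cal C}$. Since $f$ is already a fibration in $\U{\cal C}$ (every $\mathcal S$-fibration is a projective fibration, as left Bousfield localization only shrinks the class of fibrations among fibrant targets), it suffices to prove that $f$ is a sectionwise quasi-isomorphism, \ie{} a weak equivalence in $\U{\cal C}$. First I would invoke the general principle that in a left Bousfield localization, a weak equivalence between fibrant objects of the localized model structure is already a weak equivalence of the original model structure (\cite[Thm.~3.2.13]{hirschhorn:model-cat-loc}): both $K$ and $K'$ are $\mathcal S$-local (hence $\mathcal S$-fibrant since all objects of $\U{\cal C}$ are projective fibrant), and $f$ is an $\mathcal S$-equivalence because it is a $\tau$-local weak equivalence and $\U{\cal C}/\mathcal S=\U{\cal C}/\tau$ by Theorem~\ref{hypercover-localization}. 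Therefore $f$ is a projective weak equivalence, \ie{} a sectionwise quasi-isomorphism.

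Combining this with the hypothesis that $f$ is a (projective) fibration, we conclude that $f$ is a sectionwise trivial fibration. That is the entire content of the lemma.

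I would phrase the argument in two sentences: cite Theorem~\ref{hypercover-localization} to identify $\U{\cal C}/\mathcal S$ with $\U{\cal C}/\tau$ so that "$\mathcal S$-equivalence" and "$\tau$-local weak equivalence" coincide; then cite \cite[Thm.~3.2.13]{hirschhorn:model-cat-loc} for the statement that a weak equivalence between fibrant objects in a left Bousfield localization is a weak equivalence in the underlying model category. The only small point to check is that $K$ and $K'$ are genuinely $\mathcal S$-fibrant and not merely $\mathcal S$-local objects; but since all objects of $\U{\cal C}$ are projective fibrant, an object is $\mathcal S$-fibrant precisely when it is $\mathcal S$-local, so being $\mathcal S$-fibrant (as assumed) already delivers what Hirschhorn's theorem needs. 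The main (and only real) obstacle is making sure the hypotheses of the cited localization theorem are correctly matched; there is no genuine computation involved.
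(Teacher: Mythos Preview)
Your argument is circular. You invoke Theorem~\ref{hypercover-localization} to identify ${\cal S}$-equivalences with $\tau$-local equivalences, but look at the proof of Theorem~\ref{hypercover-localization} in the paper: the implication ``$\tau$-local equivalence $\Rightarrow$ ${\cal S}'$-local equivalence'' is obtained precisely by appealing to Lemma~\ref{descent-to-local}. So you are using the lemma to prove itself. A priori one only knows (via Fact~\ref{hypercover-acyclic} and the existence of the $\tau$-local model structure) that ${\cal S}$-equivalences are $\tau$-local, not the converse; Hirschhorn's Theorem~3.2.13 would indeed close the argument \emph{if} you already knew $f$ to be an ${\cal S}$-equivalence, but that is exactly what this lemma is designed to establish as an input to Theorem~\ref{hypercover-localization}.

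The paper avoids this circularity by not arguing internally to $\U{\cal C}$ at all. It tests $f$ against the generating cofibrations $I'$ of Lemma~\ref{I'}, uses shifts to reduce to the single case $\partial\simplex^{1}\Lambda(c)\to\simplex^{1}\Lambda(c)$, and then passes through the Dold-Kan adjunction of Lemma~\ref{dold-kan-local} to the category of simplicial presheaves. There the required lifting follows from the already-proved analogue \cite[Lem.~6.5]{dugger-hollander-isaksen}, which is available independently of anything in this paper. In other words, the paper imports the simplicial result as external input, whereas your proposal tries to bootstrap from the dg statement that is still in the process of being established.
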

\begin{proof}

  A morphism $f : K \to K' \in \U{\cal C}$ is a trivial fibration if
  and only if for all $c \in {\cal C}$ and all $n \in \Z$, $f$ has the
  right lifting property with respect to $\partial\simplex^{n}\Lambda(c) \to
  \simplex^n \Lambda(c)$ (see Lemma \ref{I'}).

  Let $i : (\partial \simplex^1) \otimes c \to \simplex^1 \otimes c$
  be the canonical cofibration of simplicial presheaves. Then
  $N\Lambda(i)$ is also a cofibration and $N\Lambda((\partial
  \simplex^1) \otimes c) = \partial\simplex^1 \Lambda(c)$ and
  $N\Lambda(\simplex^1 \otimes c) = \simplex^1 \Lambda(c)$. Also note
  that $\partial\simplex^n \Lambda(c) = \partial\simplex^1 \Lambda(c)
  [-n+1]$, and similarly for $\simplex^n \Lambda(c)$.  We want to show
  the existence of a lifting for every diagram of the following form
  $$\xymatrix{\partial\simplex^{n} \Lambda(c)\ar[r] \ar[d] & 
    K\ar[d]\\
    \simplex^n \Lambda(c) \ar[r] & K'}$$
  Now using shifts this is the same as showing that
  $$\xymatrix{\partial\simplex^1\Lambda(c)\ar[r] \ar[d] & 
    K[n-1]\ar[d]\\
    \simplex^1\Lambda(c) \ar[r] & K'[n-1]}$$ has a lift. Notice that
  the right vertical arrow is still an ${\cal S}$-fibration.

  But using the adjunction of Lemma~\ref{dold-kan-local} this is the
  same as showing that
  $$\xymatrix{\partial\simplex^{1}\otimes c\ar[r] \ar[d] & 
    \Gamma\tau_{\geq 0}(K[n-1])\ar[d]\\
    \simplex^{1}\otimes c \ar[r] & \Gamma\tau_{\geq 0}(K'[n-1])}$$
  has a lift, where we know that the right vertical arrow is an ${\cal
    S}$-fibration and $\tau$-local equivalence between ${\cal
    S}$-fibrant objects. Hence by
  \cite[Lem.~6.5]{dugger-hollander-isaksen} it is a trivial fibration
  sectionwise. Now $i : (\partial \simplex^1) \otimes c \to
  \simplex^1 \otimes c$ is a projective cofibration, hence there is a
  lift in the last diagram above. This finishes the proof.
\end{proof}

\begin{proof}[Proof of Theorem~\ref{hypercover-localization}]
  Let ${\cal S}$ be as in the theorem, and pick a dense subset ${\cal
    S}'$ of ${\cal S}$.

  We claim that the ${\cal S}'$-local equivalences in $\U{\cal C}$ are
  precisely the $\tau$-local equivalences. Indeed, by
  Fact~\ref{hypercover-acyclic}, every ${\cal S}'$-local equivalence is a
  $\tau$-local equivalence. For the converse, we may
  apply~\cite[Lem~6.4]{dugger-hollander-isaksen} together with
  Lemma~\ref{descent-to-local} (we also use the existence of the
  $\tau$-local model structure, see
  Remark~\ref{existence-local-model-structure}). This proves the claim
  which in turn implies that $\U{\cal C}/{\cal S}'=\U{\cal C}/\tau$.

  We deduce that every hypercover in ${\cal S}$ is an ${\cal
    S}'$-equivalence hence the localization of $\U{\cal C}$ with
  respect to $\Lambda({\cal S})[\Z]$ exists and coincides with
  $\U{\cal C}/{\cal S}'$.
\end{proof}

Let us agree to call a model category ${\cal M}$ equipped with a
functor $\gamma:{\cal C}\to{\cal M}$ \emph{$\tau$-local} if for every
$\tau$-hypercover $c_{\bullet}\to c$ in ${\cal C}$,
$\dL\colim_{\simp^{\mathrm{op}}}\gamma(c_{\bullet})\to \gamma(c)$ is
an isomorphism in $\Ho({\cal M})$. In line with the viewpoint taken in
§\ref{sec:uni-psh} let us record the following corollary of
Theorem~\ref{local-model-structure}. It asserts that $\Udg{\cal
  C}/\tau$ is the universal $\tau$-local model dg category
associated to ${\cal C}$.

\begin{cor}\label{universal-local-dg-model}
  Let $({\cal C},\tau)$ be a small site. Then there exists a functor
  $\Lambda:{\cal C}\to\Udg{\cal C}/\tau$ into a $\tau$-local model
  dg category, universal in the sense that for any solid
  diagram
  \begin{equation*}
    \xymatrix{{\cal C}\ar[rd]_{\gamma}\ar[r]^-{\Lambda}&\Udg{\cal C}/\tau\ar@{.>}[d]^{F}\\
      &{\cal M}}
  \end{equation*}
  with ${\cal M}$ a $\tau$-local model dg category, there
  exists a left Quillen dg functor $F$ as indicated by the
  dotted arrow, unique up to a contractible choice, making the diagram
  commutative up to a weak equivalence $F\Lambda\to\gamma$.
\end{cor}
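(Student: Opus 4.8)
The idea is to combine the universal property of $\Udg{\cal C}$ from Corollary~\ref{enriched-model-universal} (which applies since ${\cal V}=\ch(\Lambda)$ is cofibrantly generated, symmetric monoidal, with cofibrant unit $\Lambda$) with the universal property of a left Bousfield localization. First I would unwind the statement: a left Quillen dg functor $F\colon\Udg{\cal C}/\tau\to{\cal M}$ is the same thing as a left Quillen dg functor $\Udg{\cal C}\to{\cal M}$ (the underlying categories and cofibrations agree by Theorem~\ref{local-model-structure}(1)) that in addition sends the morphisms in $\Lambda({\cal H}_\tau)[\Z]$ to weak equivalences in ${\cal M}$; this is the defining property of the localization, \cite[Thm.~3.3.20]{hirschhorn:model-cat-loc}. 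So the category of factorizations of $\gamma$ through $\Lambda\colon{\cal C}\to\Udg{\cal C}/\tau$ is the full subcategory of $\fact(\gamma,y)$ (in the notation of §\ref{sec:result}, with $y=\Lambda$) consisting of those $(L,\eta)$ for which $L$ inverts $\Lambda({\cal H}_\tau)[\Z]$.

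The key step is then to show that the condition ``$L$ inverts $\Lambda({\cal H}_\tau)[\Z]$'' is equivalent to ``${\cal M}$ equipped with $\gamma$ is $\tau$-local'', and moreover that under the equivalence $\fact(\gamma,\Lambda)\simeq\cofrep(\gamma)$ of the Lemma preceding Proposition~\ref{pro:universality} this condition is independent of the choice of $(L,\eta)$ (equivalently, of the cofibrant replacement $\gamma'\to\gamma$). For the equivalence of conditions: given a hypercover $c_\bullet\to c$, the object $\Lambda(c_\bullet)\in\Udg{\cal C}$ is projective cofibrant (Fact~\ref{bnd}) and, by Lemma~\ref{hocolim-complexes} applied to the simplicial object $\Lambda(c_\bullet)$ in $\Udg{\cal C}$ — or rather by the analogous computation identifying $\dL\colim_{\simp^{\mathrm{op}}}$ of the simplicial diagram $c\mapsto\Lambda(c_\bullet)$ with the Moore complex $\tots(N\Lambda(c_\bullet))\simeq\Lambda(c_\bullet)$ — we have $L\Lambda(c_\bullet)\simeq\dL\colim_{\simp^{\mathrm{op}}}L\gamma(c_\bullet)$ since $L$ is left Quillen and $L\Lambda\cong\gamma$ up to weak equivalence. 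Hence $L$ inverts $\Lambda(c_\bullet)\to\Lambda(c)$ for all shifts iff $\dL\colim_{\simp^{\mathrm{op}}}\gamma(c_\bullet)\to\gamma(c)$ is an isomorphism in $\Ho({\cal M})$, i.e. iff $({\cal M},\gamma)$ is $\tau$-local. (The shifts cause no trouble: $L$ commutes with the shift up to natural isomorphism since it is a dg functor, equivalently left Quillen on a stable model category.)

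Finally, for the contractibility of the resulting category $\fact_\tau(\gamma,\Lambda)$ of $\tau$-local factorizations: I would repeat verbatim the argument in the Proof of Proposition~\ref{pro:universality}, using a fixed functorial cofibrant replacement $F$ on ${\cal M}_0$ to produce the object $(F\gamma, F\gamma\to\gamma)$ and, for any other object, the natural zig-zag $\gamma'\leftarrow F\gamma'\to F\gamma$; one only needs to check that this zig-zag stays inside the full subcategory cut out by the $\tau$-local condition, which is automatic because the corresponding left Quillen functors $(\gamma')^*$, $(F\gamma')^*$, $(F\gamma)^*$ all have the same value on $\Lambda(c_\bullet)\to\Lambda(c)$ up to weak equivalence (they are connected by objectwise weak equivalences, and inverting a map in $\Ho$ is invariant under such). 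Thus the constant-functor homotopy of Proposition~\ref{pro:universality} restricts, proving contractibility; translating back via the localization's universal property gives the corollary. The one point requiring a little care — the main obstacle — is this last invariance: that whether $\gamma'^*$ inverts the hypercover maps depends only on the weak-equivalence class of $\gamma'$, which I would extract cleanly from the homotopy-invariance of $\dL\colim$ together with $\gamma'^*\Lambda\cong\gamma'$.
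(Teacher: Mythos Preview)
Your approach is essentially the paper's: obtain the left Quillen dg functor from Corollary~\ref{enriched-model-universal}, then use the universal property of the Bousfield localization (together with Theorem~\ref{hypercover-localization}) to descend to $\Udg{\cal C}/\tau$, checking that the functor inverts $\Lambda({\cal H}_{\tau})[\Z]$ via Lemma~\ref{hocolim-complexes} and the $\tau$-locality hypothesis on $({\cal M},\gamma)$. Your handling of the contractibility claim, by observing that the condition ``inverts hypercovers'' depends only on the weak-equivalence class of $\gamma'$ so that $\fact_{\tau}(\gamma,\Lambda)$ is either empty or all of $\fact(\gamma,\Lambda)$, is in fact a bit more explicit than what the paper writes.

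However, you skip two points that are part of the \emph{statement} and that the paper does address. First, you need $\Udg{\cal C}/\tau$ to be a model dg category before any of this makes sense; this is not automatic, since the pushout-product axiom must be rechecked after localization. The paper handles this by invoking the argument of \cite[Thm.~4.46]{barwick-modcat-bousfield}, which uses that $\U{\cal C}$ is tractable (Corollary~\ref{dg-model-prop}). Second, you need $(\Udg{\cal C}/\tau,\Lambda)$ itself to be $\tau$-local; this follows from Fact~\ref{hypercover-acyclic} and Lemma~\ref{hocolim-complexes}, and is arguably implicit in your ``equivalence of conditions'' applied to the identity functor, but you should say so explicitly. (Minor slip: in your key computation you write $\dL\colim_{\simp^{\mathrm{op}}}L\gamma(c_{\bullet})$ where you mean $\dL\colim_{\simp^{\mathrm{op}}}\gamma(c_{\bullet})$; and ``dg functor, equivalently left Quillen on a stable model category'' is not an equivalence --- the shift-commutation you need comes from $L$ preserving tensors, $L(S^{n}\odot\bullet)\cong S^{n}\odot L(\bullet)$.)
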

\begin{proof}
  $\Udg{\cal C}/\tau$ as a dg category is just $\Udg{\cal C}$ and the
  cofibrations are the same hence to prove that $\Udg{\cal C}/\tau$ is
  a model dg category, it suffices to see that the pushout-product
  $i\square f$ is a $\tau$-weak equivalence for every cofibration $i$
  in $\ch(\Lambda)$ and every $\tau$-acyclic cofibration $f\in\U{\cal
    C}$. This can be established exactly as in the proof
  of~\cite[Thm.~4.46]{barwick-modcat-bousfield}. (For this step it is
  not necessary to assume as is done in loc.\, cit.\ that the
  localization is with respect to a set but only that it exists.)  The
  essential point is that $\U{\cal C}$ is a tractable model category
  (by~Proposition~\ref{dg-model-prop}).

  Next we claim that
  $\dL\colim_{\simp^{\mathrm{op}}}\Lambda(c_{\bullet})\to \Lambda(c)$
  is an isomorphism in $\Ho(\Udg{\cal C}/\tau)$. But by
  Lemma~\ref{hocolim-complexes}, this morphism can be identified with
  $\Lambda(c_{\bullet})\to\Lambda(c)$ hence the claim follows from
  Lemma~\ref{hypercover-acyclic}.

  Given a solid diagram as in the statement of the corollary we know
  by Corollary~\ref{enriched-model-universal} the existence of a left
  Quillen dg functor $F:\Udg{\cal C}\to{\cal M}$, unique up
  to contractible choice, making the triangle commutative up to a weak
  equivalence $Fy\to\gamma$. By the universal property of the
  localization of model categories together with
  Theorem~\ref{hypercover-localization}, it now suffices to prove that
  the left derived functor $\dL F$ takes $\Lambda({\cal
    H}_{\tau})[\Z]$ to isomorphisms in $\Ho({\cal M})$. Thus let
  $c_{\bullet}\to c\in{\cal H}_{\tau}$ and $n\in\Z$. First notice that
  $F$ ``commutes with shifts'' in the sense that
  \begin{equation*}
    F(\bullet[n])\cong F(S^{n}\odot \bullet)\cong S^{n}\odot F(\bullet),
  \end{equation*}
  and since ${\cal M}$ is a model dg category,
  $S^{n}\odot\bullet$ preserves weak equivalences. We thus reduce to
  the case $n=0$.

  Now, again by Lemma~\ref{hocolim-complexes},
  $\Lambda(c_{\bullet})$ can be identified with the homotopy
  colimit of $\Lambda(c_{\bullet})$. Since $F$ is a left Quillen
  dg functor it will commute with homotopy colimits in the
  homotopy category. Thus we want the upper row in the following
  commutative square to be invertible in $\Ho({\cal M})$.
  \begin{equation*}
    \xymatrix{\dL\colim_{\simp^{\mathrm{op}}}F\Lambda(c_{\bullet})\ar[r]\ar[d]&F\Lambda(c)\ar[d]\\
      \dL\colim_{\simp^{\mathrm{op}}}\gamma(c_{\bullet})\ar[r]&\gamma(c)}
  \end{equation*}
  Our assumptions tell us that the vertical arrows as well as the
  bottom arrow are isomorphisms so we conclude.
\end{proof}

\subsection{Smaller models}
Having described explicitly generators and relations for the model dg
category $\U{\cal C}/\tau$ associated to a small site $({\cal
  C},\tau)$, we give in this section two methods to modify the model
$\U{\cal C}/\tau$ up to Quillen equivalence which are useful in
practice. The first consists in replacing presheaves by sheaves, the
second allows to reduce the ``number'' of generators. In both cases
therefore we obtain ``smaller'' models with the same homotopy
category. Both modifications are straightforward and have been
employed before in the literature.

The category of $\tau$-sheaves of complexes on ${\cal C}$,
$\sh_{\tau}({\cal C},\ch(\Lambda))$, admits the $\tau$-local model
structure, obtained by transfer along the right adjoint
$\sh_{\tau}({\cal C},\ch(\Lambda))\to\psh({\cal C},\ch(\Lambda))$
(cf.~\cite[Cor.~4.4.43]{ayoub07-thesis}). Since the morphism $K\to
a_{\tau}K$ is a $\tau$-local equivalence for every $K\in\U{\cal C}$,
the following statement is immediate.

\begin{fac}
  \begin{equation*}
    \xymatrix{\U{\cal C}/\tau\ar@<.5ex>[r]^-{a_{\tau}}&\sh_{\tau}({\cal C},\ch(\Lambda))/\tau\ar@<.5ex>[l]}
  \end{equation*}
  defines a Quillen equivalence. Their homotopy categories are the
  derived category of $\tau$-sheaves on ${\cal C}$.
\end{fac}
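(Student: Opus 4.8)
The plan is to apply the standard recognition criterion for Quillen equivalences, keeping track throughout of the fact that on both sides the weak equivalences are the $\tau$-local equivalences. First, $(a_{\tau},\mathrm{incl})$ is automatically a Quillen adjunction: the model structure on $\sh_{\tau}({\cal C},\ch(\Lambda))/\tau$ is the one transferred along the right adjoint $\mathrm{incl}$ (cf.~\cite[Cor.~4.4.43]{ayoub07-thesis}), so by construction $\mathrm{incl}$ preserves fibrations and trivial fibrations, i.e.~is right Quillen, and a map $g$ of complexes of sheaves is a weak equivalence precisely when $\mathrm{incl}(g)$ is a $\tau$-local equivalence of presheaves.

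The one substantive input is that sheafification does not perturb homology sheaves. Since $a_{\tau}$ is exact it commutes with the homology functors $\h_{n}$, so $\h_{n}(a_{\tau}K)\cong a_{\tau}\h_{n}(K)$ for every $K\in\U{\cal C}$; combined with $a_{\tau}\circ\mathrm{incl}=\mathrm{id}$ this gives, compatibly with the unit $K\to a_{\tau}K$, an isomorphism $a_{\tau}\h_{n}(K)\cong a_{\tau}\h_{n}(a_{\tau}K)$. Hence the unit $K\to a_{\tau}K$ is a $\tau$-local equivalence for every $K$, and likewise, for a map $g$ of complexes of sheaves, $a_{\tau}\h_{n}\mathrm{incl}(g)$ computes the homology sheaf $\h_{n}(g)$ taken in the abelian category $\sh_{\tau}({\cal C},\Lambda)$, so $\mathrm{incl}$ both preserves and reflects $\tau$-local equivalences. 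With these two facts the Quillen equivalence follows from~\cite[Cor.~1.3.16]{hovey-modelcategories}: $\mathrm{incl}$ reflects weak equivalences between fibrant objects (indeed between all objects), and for cofibrant $K$, choosing a fibrant replacement $a_{\tau}K\to L$ in $\sh_{\tau}({\cal C},\ch(\Lambda))/\tau$, the derived unit is the composite $K\to\mathrm{incl}(a_{\tau}K)\to\mathrm{incl}(L)$, whose first map is a $\tau$-local equivalence by the above and whose second map is $\mathrm{incl}$ applied to a weak equivalence, hence also a $\tau$-local equivalence; by the two-out-of-three property the derived unit is a weak equivalence.

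Finally, the underlying category of $\sh_{\tau}({\cal C},\ch(\Lambda))/\tau$ is $\ch(\sh_{\tau}({\cal C},\Lambda))$, and by the previous paragraph its weak equivalences are exactly the morphisms inducing isomorphisms on homology computed in the Grothendieck abelian category $\sh_{\tau}({\cal C},\Lambda)$, i.e.~the quasi-isomorphisms of complexes of $\tau$-sheaves. Therefore $\Ho\big(\sh_{\tau}({\cal C},\ch(\Lambda))/\tau\big)$ is the derived category $\D(\sh_{\tau}({\cal C},\Lambda))$ of $\tau$-sheaves, and the Quillen equivalence transports this identification to $\Ho(\U{\cal C}/\tau)$. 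There is no serious obstacle here; the only place that genuinely needs care is the bookkeeping — confirming that ``weak equivalence'' means ``$\tau$-local equivalence'' on both sides (which rests on the cited existence of the local model structure on sheaves) and that exactness of $a_{\tau}$ lets one move homology freely across $a_{\tau}$ and $\mathrm{incl}$.
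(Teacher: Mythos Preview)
Your proof is correct and follows the same approach as the paper: the paper simply notes in the sentence preceding the statement that $K\to a_{\tau}K$ is a $\tau$-local equivalence for every $K$ and declares the Fact ``immediate'', while you spell out exactly why that fact, together with the transferred model structure on sheaves, yields a Quillen equivalence via the standard recognition criterion.
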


It happens frequently that every object $c\in{\cal C}$ can be covered
by objects belonging to a distinguished strict subcategory ${\cal
  C}'$. Certainly one then expects the model dg categories
generated by ${\cal C}$ and ${\cal C}'$ with the topological relations
to be ``the same''. The following result makes this precise.
\begin{cor}\label{subsite}
  Let ${\cal C}'$ be a full subcategory of ${\cal C}$, and endow it
  with the topology $\tau'$ induced from $\tau$. Assume that every
  object $c\in{\cal C}$ can be covered by objects belonging to ${\cal
    C}'$. Then the (functor underlying the) canonical
  dg functor
  \begin{equation*}
    \U{\cal C}'/\tau'\longrightarrow \U{\cal C}/\tau
  \end{equation*}
  defines a Quillen equivalence.
\end{cor}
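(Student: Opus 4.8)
The plan is to realise the canonical dg functor as the left dg Kan extension $j^{*}:=(\Lambda\circ j)^{*}$ of $\Lambda\circ j:{\cal C}'\to\U{\cal C}$ along the Yoneda embedding, where $j:{\cal C}'\hookrightarrow{\cal C}$ is the inclusion, and then to proceed in three steps: (1) $j^{*}$ is left Quillen for the projective structures; (2) it descends to the local structures; (3) the resulting Quillen adjunction is a Quillen equivalence. For (1): every representable $\Lambda(c')$, $c'\in{\cal C}'$, is cofibrant in $\U{\cal C}$ (Fact~\ref{bnd}), so by Lemma~\ref{enriched-kan-left-quillen} the functor $j^{*}:\U{{\cal C}'}\to\U{\cal C}$ is a left Quillen dg functor; by Fact~\ref{enriched-kan} its right adjoint $j_{*}$ is $K\mapsto\U{\cal C}(\Lambda(j(\bullet)),K)=K\circ j$, i.e.\ restriction along $j$, and I use the same symbol for the induced restriction $\psh({\cal C},\Lambda)\to\psh({\cal C}',\Lambda)$, right adjoint to the presheaf-level left Kan extension $j_{!}$. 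Two elementary observations will be used repeatedly: since $j$ is fully faithful the unit $\id\to j_{*}j^{*}$ is an isomorphism; and, since $j^{*}$ is cocontinuous and sends $\Lambda(c')$ to $\Lambda(c')$, it carries a complex of (semi-)representables on ${\cal C}'$ to the ``same'' complex regarded on ${\cal C}$ — in particular it sends the Moore complex $\Lambda(c'_{\bullet})$ of a simplicial presheaf $c'_{\bullet}$ in ${\cal C}'$ to $\Lambda(j_{!}c'_{\bullet})$.

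For (2), I would invoke Corollary~\ref{universal-local-dg-model} applied to the small site $({\cal C}',\tau')$ and the functor $\gamma=\Lambda\circ j:{\cal C}'\to\U{\cal C}/\tau$: this produces the desired left Quillen dg functor $L:\U{{\cal C}'}/\tau'\to\U{\cal C}/\tau$ (with underlying functor $j^{*}$) as soon as $\U{\cal C}/\tau$ equipped with $\gamma$ is $\tau'$-local, i.e.\ for every $\tau'$-hypercover $c'_{\bullet}\to c'$ in ${\cal C}'$ the morphism $\dL\colim_{\simp^{\mathrm{op}}}\Lambda(c'_{\bullet})\to\Lambda(c')$ is invertible in $\Ho(\U{\cal C}/\tau)$. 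By Lemma~\ref{hocolim-complexes} this is identified with the morphism of complexes $\Lambda(c'_{\bullet})\to\Lambda(c')$ in $\U{\cal C}$, so it suffices to see the latter is a $\tau$-local equivalence. The key input is the Comparison Lemma~\cite[Exp.~III, Thm.~4.1]{sga4}: under our hypotheses (full subcategory, induced topology, every object of ${\cal C}$ covered by objects of ${\cal C}'$) restriction along $j$ is an equivalence $\varepsilon:\sh_{\tau}({\cal C},\Lambda)\xrightarrow{\sim}\sh_{\tau'}({\cal C}',\Lambda)$, compatible with sheafification in the sense that $a_{\tau}\circ j_{!}\cong\varepsilon^{-1}\circ a_{\tau'}$ (both sides being left adjoint to ``restrict a $\tau$-sheaf to ${\cal C}'$'') and $a_{\tau'}\circ j_{*}\cong\varepsilon\circ a_{\tau}$. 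Applying the first isomorphism degreewise to $\Lambda(c'_{\bullet})$, using exactness of $a_{\tau}$, $a_{\tau'}$, $\varepsilon$ and Fact~\ref{hypercover-acyclic} inside $({\cal C}',\tau')$, one computes $a_{\tau}\h_{n}\bigl(j_{!}\Lambda(c'_{\bullet})\bigr)\cong\varepsilon^{-1}\bigl(a_{\tau'}\h_{n}\Lambda(c'_{\bullet})\bigr)$, which is $a_{\tau}\Lambda(c')$ for $n=0$ and $0$ otherwise; hence $\Lambda(c'_{\bullet})\to\Lambda(c')$ is a $\tau$-local equivalence.

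For (3), I would verify Hovey's criterion~\cite[Cor.~1.3.16]{hovey-modelcategories}: $L$ is a Quillen equivalence provided $j_{*}$ reflects weak equivalences between fibrant objects of $\U{\cal C}/\tau$, and for every cofibrant $A\in\U{{\cal C}'}$ the derived unit $A\to\dR j_{*}\,\dL L(A)$ is a $\tau'$-local equivalence. Both follow from the second compatibility $a_{\tau'}\circ j_{*}\cong\varepsilon\circ a_{\tau}$: for a morphism $f$ of presheaves of complexes on ${\cal C}$ and any $n$, $a_{\tau'}\h_{n}(f\circ j)\cong\varepsilon\bigl(a_{\tau}\h_{n}(f)\bigr)$, so — $\varepsilon$ being an equivalence — $f$ is a $\tau$-local equivalence iff $f\circ j$ is a $\tau'$-local equivalence; in particular $j_{*}$ preserves and reflects local equivalences. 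For the derived unit, $\dL L(A)=j^{*}A$ is already cofibrant, $\dR j_{*}(j^{*}A)=(j^{*}A)^{\mathrm{fib}}\circ j$ for a $\tau$-fibrant replacement $j^{*}A\to(j^{*}A)^{\mathrm{fib}}$, and since $A\to j_{*}j^{*}A$ is an isomorphism the derived unit is the restriction along $j$ of the $\tau$-local equivalence $j^{*}A\to(j^{*}A)^{\mathrm{fib}}$, hence a $\tau'$-local equivalence. This completes the proof.

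The main obstacle is the site-theoretic input: one must extract from the Comparison Lemma not only the equivalence $\varepsilon$ but also its compatibility with sheafification along $j$ in both directions, under the minimal hypotheses in force. By contrast the homotopy-theoretic part — identifying the canonical functor with $j^{*}$, descending along the Bousfield localization via Corollary~\ref{universal-local-dg-model}, and the Hovey-criterion bookkeeping — is formal once those compatibilities are available; the one further point worth spelling out carefully is the concrete description of $j^{*}$ on Moore complexes of (semi-)representables used throughout.
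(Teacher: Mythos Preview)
Your argument is correct, but it takes a genuinely different route from the paper's. Both proofs rely on the Comparison Lemma~\cite[Exp.~III, Thm.~4.1]{sga4} and on Corollary~\ref{universal-local-dg-model} to obtain the left Quillen functor, but the paper dispatches the Quillen \emph{equivalence} part much more quickly: it passes to sheaves via the Quillen equivalences $\U{\cal C}/\tau\simeq\sh_{\tau}({\cal C},\ch(\Lambda))/\tau$ (and similarly for ${\cal C}'$) established in the preceding Fact, observes that the square of right Quillen functors commutes, and then invokes the Comparison Lemma in its raw form---restriction is an \emph{equivalence of abelian categories} $\sh_{\tau}({\cal C},\Lambda)\simeq\sh_{\tau'}({\cal C}',\Lambda)$---so that the top arrow induces an equivalence of derived categories and $2$-out-of-$3$ finishes. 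You instead stay entirely at the presheaf level and verify Hovey's criterion by hand, which forces you to extract from the Comparison Lemma the finer compatibilities $a_{\tau}\circ j_{!}\cong\varepsilon^{-1}\circ a_{\tau'}$ and $a_{\tau'}\circ j_{*}\cong\varepsilon\circ a_{\tau}$, and then to track the derived unit through a fibrant replacement. Your approach is more self-contained (it does not need the sheafification Quillen equivalence) and makes transparent exactly \emph{why} restriction reflects local equivalences, at the cost of more bookkeeping; the paper's approach is shorter and more conceptual, trading the explicit derived-unit computation for a $2$-out-of-$3$ argument in a commutative square. One cosmetic point: your notation $(j^{*},j_{*})$ for (left Kan extension, restriction) is opposite to the paper's $(u_{!},u^{*})$, which could confuse a reader comparing the two.
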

\begin{proof}
  The composition ${\cal C}'\xrightarrow{u}{\cal C}\to\U{\cal C}/\tau$
  induces the left Quillen dg functor $u_{!}$ in the
  statement by the universal property of $\U{\cal C}'/\tau'$
  (Corollary~\ref{universal-local-dg-model}), left-adjoint to the
  restriction functor $u^{*}$. 
    Consider
    the square of Quillen right functors:
    \begin{equation*}
      \xymatrix{\sh_{\tau}({\cal
          C},\ch(\Lambda))/\tau\ar[r]^{u^{*}}\ar[d]&\sh_{\tau'}({\cal
          C}',\ch(\Lambda))/\tau'\ar[d]\\
        \U{\cal C}/\tau\ar[r]_{u^{*}}&\U{\cal C}'/\tau'}
    \end{equation*}
    Clearly, it commutes. By the previous fact, the vertical arrows
    are part of a Quillen equivalence, and the homotopy categories in
    the top row are the derived categories of $\tau$-sheaves (resp.\
    $\tau'$-sheaves) on ${\cal C}$ (resp.\ ${\cal
      C}'$). By~\cite[Thm.~III.4.1]{sga4}, the top arrow is an
    equivalence of the underlying categories hence so is the induced
    functor on their derived categories.
\end{proof}

\subsection{Hypercohomology}

One might hope that the results obtained so far in this section allow
to describe a $\tau$-fibrant replacement directly in terms of
hypercovers. In particular, this would lead to an expression for the
hypercohomology of complexes of sheaves using hypercovers alone. We
have not been able to provide such a fibrant replacement but, as we
will now show, the hypercohomology does indeed admit such an expected
description. This result should be compared to Verdier's hypercover
theorem in~\cite[Thm.~V,~7.4.1]{sga4}. Our proof once again proceeds
by reducing to the case of simplicial (pre)sheaves of sets
in~\cite{dugger-hollander-isaksen}. (In the following, we write
$\h^{n}$ for $\h_{-n}$.)
\begin{pro}
  Assume that every $\tau$-hypercover can be refined by a split
  one. Let $K\in\U{\cal C}$ be a presheaf of complexes on ${\cal C}$,
  $c\in{\cal C}$, and $n\in\Z$. Then there is a canonical isomorphism
  of $\Lambda$-modules
  \begin{equation*}
    \mathbb{H}^{n}_{\tau}(c,a_{\tau}K)\cong \colim_{c_{\bullet}\to c}\h^{n}K(c_{\bullet}),
  \end{equation*}
  where the left hand side denotes hypercohomology of the complex of
  $\tau$-sheaves $a_{\tau}K$ on ${\cal C}/c$, and the colimit on the
  right hand side is over the opposite category of $\tau$-hypercovers
  of $c$ up to simplicial homotopy (cf.~\cite[§V.7.3]{sga4}).
\end{pro}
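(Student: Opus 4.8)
The plan is to follow the pattern of the proof of Theorem~\ref{local-model-structure}: transport the statement across the Dold-Kan adjunction (Proposition~\ref{dold-kan}, Lemma~\ref{dold-kan-local}) to the world of simplicial presheaves of sets, where it becomes a form of the Verdier hypercover theorem proved in~\cite{dugger-hollander-isaksen}. First I would reformulate both sides homotopy-theoretically. By the Quillen equivalence $\U{\cal C}/\tau\simeq\sh_{\tau}({\cal C},\ch(\Lambda))/\tau$ and Corollary~\ref{dg-hty-classes}, the left-hand side is $\hom_{\Ho(\U{\cal C}/\tau)}(\Lambda(c),K[-n])$; and since each $\Lambda(c_{\bullet})$ is cofibrant (it is bounded below with semi-representable terms, Fact~\ref{bnd}), Corollary~\ref{dg-hty-classes} rewrites $\h^{n}K(c_{\bullet})$ as $\hom_{\Ho(\U{\cal C})}(\Lambda(c_{\bullet}),K[-n])$ --- the transition maps and the passage to simplicial homotopy being harmless because homotopic maps of hypercovers induce chain-homotopic maps on Moore complexes. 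Thus the Proposition is equivalent to the assertion that, for $M=K[-n]$ and hence (since $[-n]$ is invertible) for every $M\in\U{\cal C}$, the canonical comparison
\begin{equation*}
  \colim_{c_{\bullet}\to c}\hom_{\Ho(\U{\cal C})}(\Lambda(c_{\bullet}),M)\longrightarrow \hom_{\Ho(\U{\cal C}/\tau)}(\Lambda(c),M)
\end{equation*}
is an isomorphism. I would also record at the outset that the category of hypercovers of $c$ modulo simplicial homotopy is cofiltered (see~\cite{dugger-hollander-isaksen}, \cite[Ch.~V]{sga4}), so that this colimit is filtered, hence exact.

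I would then prove the isomorphism for $M$ connective. In that case $X:=\Gamma M$ is objectwise a Kan complex, and under Dold-Kan its Jardine-local fibrant replacement corresponds to a local connective replacement of $a_{\tau}M$, so $\pi_{q}$ of that replacement at $c$ is $\mathbb{H}^{-q}_{\tau}(c,a_{\tau}M)$; on the other hand, for a split hypercover $c_{\bullet}$ one has $\mathrm{map}(c_{\bullet},X)\simeq\Gamma\tau_{\geq0}\dghom(N\Lambda(c_{\bullet}),M)\simeq\Gamma\tau_{\geq0}M(c_{\bullet})$ by Fact~\ref{moore-normalized} (the normalized-to-Moore map being a quasi-isomorphism), whence $\pi_{q}\mathrm{map}(c_{\bullet},X)=\h_{q}M(c_{\bullet})$ for $q\geq0$. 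The simplicial Verdier hypercover theorem of~\cite{dugger-hollander-isaksen} --- that the homotopy colimit of $\mathrm{map}(c_{\bullet},X)$ over the cofiltered category of split hypercovers computes the local fibrant replacement of $X$ at $c$ --- then yields the claim for all $n\leq0$; the hypothesis that every hypercover is refined by a split one is precisely what makes the split hypercovers cofinal, so that the colimit over all hypercovers agrees with the one over split hypercovers. For $n>0$ one replaces $M$ by $M[-n]$, which is again connective, and invokes the case $n=0$.

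I would then bootstrap to arbitrary $M$. Both sides of the comparison carry distinguished triangles to long exact sequences --- for the right-hand side because the filtered colimit is exact --- and the comparison is a map of such sequences, so validity of the statement is stable under extensions and shifts. By the truncation triangle $\tau_{\geq0}M\to M\to\tau_{<0}M$ it then remains to treat the coconnective piece $\tau_{<0}M$ (homology in negative degrees). This is the increasing union of its bounded good truncations $\tau_{[-m,-1]}M$; each $\tau_{[-m,-1]}M$ is bounded, so --- shifting each of its finitely many homology presheaves into degree $0$ to apply the connective case, and iterating the five lemma --- the statement holds for it. Finally, for a fixed $n$ and $m\geq n$ the inclusion $\tau_{[-m,-1]}M\hookrightarrow\tau_{<0}M$ induces isomorphisms both on $\mathbb{H}^{n}_{\tau}(c,a_{\tau}(-))$ and on $\colim_{c_{\bullet}}\h^{n}(-)(c_{\bullet})$, so the statement for $\tau_{<0}M$ follows from that for $\tau_{[-m,-1]}M$; the inputs are the vanishing $\mathbb{H}^{k}_{\tau}(c,a_{\tau}\tau_{<-m}M)=0$ for $k\leq m$ (hypercohomology spectral sequence) and, on the other side, $\h_{k}\dghom(\Lambda(c_{\bullet}),\tau_{<-m}M)=0$ for $k>-m-1$, extracted from the filtration by columns of that product total complex.

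The main obstacle is this last, coconnective, step. One cannot merely truncate $K$ and quote the connective case, because $K(c_{\bullet})=\dghom(\Lambda(c_{\bullet}),K)$ intertwines the homological degrees of $K$ with the simplicial degrees of $c_{\bullet}$ --- and, being a \emph{product} total complex, involves infinite products along the covers --- so that for a fixed hypercover $\h^{n}K(c_{\bullet})$ genuinely depends on all of $K$. What makes things work is that the contribution of the tail $\tau_{<-m}K$ to $\h_{k}\dghom(\Lambda(c_{\bullet}),\tau_{<-m}K)$ is nonetheless confined to degrees $k\leq -m-1$; pinning this down requires the filtration by columns of the product total complex and a $\varprojlim/\varprojlim^{1}$ estimate, and it has to be matched with the analogous vanishing on the sheaf side coming from the hypercohomology spectral sequence. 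The remaining ingredients --- the Dold-Kan comparisons, compatibility with shifts and with refinements of hypercovers, and cofinality of split hypercovers --- are routine but need to be laid out with some care.
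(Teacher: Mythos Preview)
Your argument is essentially correct, but it takes a considerably longer route than the paper's. The paper's proof is a single chain of isomorphisms with no truncation bootstrap at all. The key point you miss is that the Dold--Kan Quillen adjunction of Lemma~\ref{dold-kan-local} applies at the $\tau$-local level for \emph{arbitrary} $K$: since $\Gamma\tau_{\geq 0}$ preserves $\tau$-local equivalences, the derived adjunction gives directly
\[
  \hom_{\Ho(\U{\cal C}/\tau)}(\Lambda(c),K[-n])\;\cong\;\hom_{\Ho(\simp^{\mathrm{op}}\psh({\cal C})/\tau)}\bigl(c,\Gamma\tau_{\geq 0}(K[-n])\bigr),
\]
with no connectivity hypothesis on $K$. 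One then invokes the simplicial Verdier theorem \cite[Thm.~7.6(b)]{dugger-hollander-isaksen} to rewrite the right-hand side as $\colim_{c_{\bullet}\to c}\pi(c_{\bullet},\Gamma\tau_{\geq 0}(K[-n]))$, restricts to split hypercovers by cofinality, and then unwinds back to $\h^{n}K(c_{\bullet})$ using the projective-level adjunction (split hypercovers being cofibrant) together with Fact~\ref{moore-normalized} and Corollary~\ref{dg-hty-classes}. No five-lemma, no coconnective analysis, no $\varprojlim^{1}$ estimates.

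Your approach---prove it for connective $M$ via Dold--Kan and then bootstrap through truncation triangles---does work, and the vanishing you need for the tail $\tau_{<-m}M$ is correct (the complex $\dghom(\Lambda(c_{\bullet}),\tau_{<-m}M)$ is concentrated in degrees $\leq -m$, so this is even simpler than the column-filtration argument you sketch). What it buys is perhaps a more explicit picture of why unboundedness is harmless. What the paper's approach buys is that the entire coconnective discussion, which you rightly flag as the main obstacle, simply evaporates: applying $\tau_{\geq 0}$ before $\Gamma$ loses nothing, because $\Lambda(c)$ sits in degree~$0$ and the right adjoint is already exact for local equivalences. A minor point: your exposition oscillates between the $(K,n)$ and the absorbed-$M$ formulations, which makes the paragraph on ``$n\leq 0$'' and ``$n>0$'' a bit hard to parse; once $n$ is absorbed into $M$ it would be cleaner to speak only of shifts of $M$.
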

\begin{proof}
  This follows from the following sequence of isomorphisms:
  \begin{align*}
    \mathbb{H}^{n}_{\tau}(c,a_{\tau}K)&\cong \hom_{\Ho(\U{\cal
        C}/\tau)}(\Lambda(c),a_{\tau}K[-n])&&\text{Corollary~\ref{dg-hty-classes}}\\
    &\cong \hom_{\Ho(\U{\cal
        C}/\tau)}(\Lambda(c),K[-n])&&K\to a_{\tau}K\text{ $\tau$-local
      equivalence}\\
    &\cong\hom_{\Ho(\simp^{\mathrm{op}}\psh({\cal
        C})/\tau)}(c,\Gamma\tau_{\geq
      -n}K)&&\text{Lemma~\ref{dold-kan-local}}\\
    &\cong\colim_{c_{\bullet}\to c}\pi(c_{\bullet},\Gamma\tau_{\geq
      -n}K)&&\text{\cite[Thm.~7.6(b)]{dugger-hollander-isaksen}}\\
    &\cong\colim_{c_{\bullet}\to c\text{ split}}\pi(c_{\bullet},\Gamma\tau_{\geq
      -n}K)&&\text{assumption}\\
    &\cong\colim_{c_{\bullet}\to c\text{ split}}\hom_{\Ho(\simp^{\mathrm{op}}\psh({\cal C}))}(c_{\bullet},\Gamma\tau_{\geq
      -n}K)&&\text{split hypercovers cofibrant}\\
    &\cong\colim_{c_{\bullet}\to c\text{ split}}\hom_{\Ho(\U{\cal
        C})}(N\Lambda(c_{\bullet}),K[-n])&&\text{Lemma~\ref{dold-kan}}\\
    &\cong\colim_{c_{\bullet}\to c\text{ split}}\hom_{\Ho(\U{\cal
        C})}(\Lambda(c_{\bullet}),K[-n])&&\text{Lemma~\ref{moore-normalized}}\\
    &\cong\colim_{c_{\bullet}\to c\text{
        split}}\h^{n}K(c_{\bullet})&&\text{Corollary~\ref{dg-hty-classes}}\\
    &\cong\colim_{c_{\bullet}\to c}\h^{n}K(c_{\bullet})&&\text{assumption}
  \end{align*}
\end{proof}

\begin{rem}
  The hypothesis of the Proposition, \ie{} that every hypercover
  admits a split refinement, is satisfied in many cases, \eg{} when
  $({\cal C},\tau)$ is a Verdier site,
  see~\cite[Thm.~8.6]{dugger-hollander-isaksen}.

  Moreover, in these cases the proposition represents another approach
  to Theorem~\ref{local-model-structure}. Indeed, the essential point,
  as we mentioned in Remark~\ref{existence-local-model-structure}, is
  the description of the $\tau$-fibrant objects in $\U{\cal
    C}/\tau$. Since $\Lambda(c_{\bullet})\to\Lambda(c)$ is a
  $\tau$-local equivalence for each $\tau$-hypercover $c_{\bullet}\to
  c$ (Fact~\ref{hypercover-acyclic}) it is clear that $\tau$-fibrant
  objects satisfy $\tau$-descent. Conversely, suppose $K\in\U{\cal C}$
  satisfies $\tau$-descent and choose a $\tau$-fibrant replacement
  $f:K\to K'$. Using the previous proposition we will prove that $f$
  is a quasi-isomorphism.

  Fix $c\in{\cal C}$ and $n\in\Z$. Consider the following commutative
  diagram:
  \begin{equation*}
    \xymatrix{\colim_{c_{\bullet}\to
        c}\h^{n}K(c_{\bullet})\ar[r]^-{\sim}&\hom_{\Ho(\U{\cal
          C}/\tau)}(\Lambda(c),K[-n])\ar[d]^{f}\\
    \h^{n}K(c)\ar[u]\ar[r]_{f}&\h^{n}K'(c)}
  \end{equation*}
  The left vertical arrow is an isomorphism since $K$ satisfies
  $\tau$-descent. The right vertical arrow is an isomorphism since
  $K'$ is $\tau$-fibrant. Thus the claim.
\end{rem}

\subsection{Complements}
In this last paragraph we discuss two further aspects of the local dg
homotopy theory: monoidal structures, and closure of fibrant objects
under certain operations.

\begin{pro}
  Assume that either of the following conditions is satisfied:
  \begin{enumerate}
  \item ${\cal C}$ is cartesian monoidal.
  \item For any objects $c,d\in{\cal C}$,
    $\Lambda(c)\otimes\Lambda(d)$ is projective, and $({\cal C},\tau)$
    has enough points.
  \end{enumerate}
  Then $\U{\cal C}/\tau$ is a symmetric monoidal model category for
  the objectwise tensor product.
\end{pro}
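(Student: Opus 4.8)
The plan is to bootstrap from Lemma~\ref{uni-dg-mon-mod}, which says that $\U{\cal C}$ with the projective model structure is a symmetric monoidal model category for the objectwise tensor product $\otimes$ exactly when $\Lambda(c)\otimes\Lambda(d)$ is projective for all $c,d\in{\cal C}$. Both hypotheses of the proposition guarantee this: in case (2) it is assumed outright, while in case (1) one has $\Lambda(c)\otimes\Lambda(d)\cong\Lambda(c\times d)$ (objectwise this is the free $\Lambda$-module on ${\cal C}(-,c)\times{\cal C}(-,d)={\cal C}(-,c\times d)$), which is representable, hence projective; moreover in case (1) the monoidal unit --- the constant presheaf $\Lambda$ in degree $0$ --- equals $\Lambda(\ast)$ for $\ast$ the terminal object, hence is cofibrant. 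Recall also that by Fact~\ref{dense-subset} the category $\U{\cal C}/\tau$ is the left Bousfield localization of $\U{\cal C}$ at the \emph{set} $\Lambda({\cal S}')[\Z]$ for a dense subset ${\cal S}'\subseteq{\cal H}_{\tau}$, and that $\U{\cal C}$ is tractable (Corollary~\ref{dg-model-prop}).

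Since $\U{\cal C}/\tau$ and $\U{\cal C}$ share the same underlying category and the same cofibrations, the pushout--product of two cofibrations is again a cofibration, the unit axiom transfers from $\U{\cal C}$ (every objectwise equivalence being a $\tau$-local equivalence; this is trivial in case (1)), and $\U{\cal C}/\tau$ is in particular left proper. So the statement comes down to the ``trivial'' half of the pushout--product axiom for the $\tau$-local model structure. Here I would invoke a criterion for monoidality of a Bousfield localization in the spirit of~\cite[Thm.~4.46]{barwick-modcat-bousfield}: for the tractable symmetric monoidal model category $\U{\cal C}$, it suffices that the localizing set $\Lambda({\cal S}')[\Z]$ be stable, up to $\tau$-local equivalence, under pushout--product with the generating cofibrations of $\U{\cal C}$. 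As the domains and codomains of the generating cofibrations $S^{n-1}\Lambda(c)\to D^{n}\Lambda(c)$ are shifts of representables, this reduces to a single clean assertion: for every $d\in{\cal C}$ the functor $\Lambda(d)\otimes(-)$ preserves $\tau$-local equivalences; equivalently, $\Lambda(c_{\bullet})\otimes\Lambda(d)\to\Lambda(c)\otimes\Lambda(d)$ is a $\tau$-local equivalence for every $\tau$-hypercover $c_{\bullet}\to c$ and every $d$. (If one prefers to avoid the black box, the same reduction can be carried out by hand, reducing the pushout--product axiom to generating (trivial) cofibrations and running a two-out-of-three argument in the pushout square defining $i\square f$, using left properness.)

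It is exactly here that the two hypotheses enter, in different guises. Under (1) the map above is $\Lambda(c_{\bullet}\times d)\to\Lambda(c\times d)$, and $c_{\bullet}\times d\to c\times d$ is again a $\tau$-hypercover: products distribute over colimits in $\psh({\cal C})$, so every level is a coproduct of representables; the functor $(-)\times d$ commutes with the (co)limits computing $\mathrm{sk}$ and $\mathrm{cosk}$; and generalized covers are stable under $(-)\times d$ because sheafification is left exact. The claim then follows from Fact~\ref{hypercover-acyclic}. Under (2) the functor $\Lambda(d)\otimes(-)$ is exact on presheaves and commutes with the homology presheaves $\h_{n}$ (objectwise it is a direct sum of copies of the identity), and since $({\cal C},\tau)$ has enough points a morphism is a $\tau$-local equivalence if and only if it is a stalkwise quasi-isomorphism; on each stalk $\Lambda(d)\otimes(-)$ becomes a direct sum of copies of the stalk, so it preserves $\tau$-local equivalences, and Fact~\ref{hypercover-acyclic} again concludes.

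The main obstacle is organizational rather than computational: one must match the hypotheses of whatever ``monoidal Bousfield localization'' theorem one cites to the reduced statement above, or else run the reduction to generating cofibrations carefully, tracking that $\tau$-acyclic cofibrations are closed under pushout, transfinite composition, and retract. The verification in case (1) that $c_{\bullet}\times d\to c\times d$ is a hypercover is the only place where one has to open up the (co)skeleton construction, and even that is routine.
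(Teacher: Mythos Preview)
Your argument is correct and lands in essentially the same place as the paper's. The only notable difference is one of packaging: for case~(1) the paper invokes the \emph{internal hom} criterion of \cite[Pro.~4.47]{barwick-modcat-bousfield} (showing $[\Lambda(d),K]$ is $\tau$-local whenever $K$ is), while you use the dual \emph{tensor} criterion (showing $\Lambda(d)\otimes(-)$ preserves the localizing equivalences); both reduce to $c_{\bullet}\times d\to c\times d$ inducing a $\tau$-local equivalence, which the paper deduces from ``homotopy groups and sheafification commute with finite products'' whereas you check more strongly that it is again a hypercover. For case~(2) the paper simply defers to \cite[Pro.~4.4.63]{ayoub07-thesis}, so your stalkwise argument using enough points is more self-contained; the small wrinkle that $\Lambda(d)_{p}$ need not literally be free (only flat, as a filtered colimit of free modules) is harmless since tensoring with any module preserves isomorphisms.
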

\begin{proof}\mbox{}
  \begin{enumerate}
  \item If ${\cal C}$ is cartesian monoidal, we may adapt the proof
    of~\cite[Thm.~4.58]{barwick-modcat-bousfield}. By~\cite[Pro.~4.47]{barwick-modcat-bousfield},
    it suffices to prove that for each $d\in{\cal C}$, and each
    $\tau$-local $K\in\U{\cal C}$, the internal hom object
    $[\Lambda(d),K]$ is $\tau$-local. Thus let $c_{\bullet}\to c$ be a
    $\tau$-hypercover. Using the commutative diagram
    \begin{equation*}
      \xymatrix{\dghom(\Lambda(c),[\Lambda(d),K])\ar[r]\ar[d]_{\sim}&\dghom(\Lambda(c_{\bullet}),[\Lambda(d),K)]\ar[d]^{\sim}\\
        \dghom(\Lambda(c)\otimes\Lambda(d),K)\ar[d]_{\sim}\ar[r]&\dghom(\Lambda(c_{\bullet})\otimes\Lambda(d),K)\ar[d]^{\sim}\\
       \dghom(\Lambda(c\times
       d),K)\ar[r]&\dghom(\Lambda(c_{\bullet}\times d),K)}
    \end{equation*}
    we reduce to showing that $c_{\bullet}\times d\to c\times d$ is a
    $\tau$-local equivalence of simplicial presheaves. This follows
    from the fact that homotopy groups and sheafification commute with
    finite products.
  \item By Lemma~\ref{uni-dg-mon-mod}, $\U{\cal C}$ is a symmetric
    monoidal model category. The result then follows from the proof
    of~\cite[Pro.~4.4.63]{ayoub07-thesis} (whereas the statement in
    loc.\,cit.\ misses the first hypothesis above).\qedhere{}
  \end{enumerate}
\end{proof}

Our description of $\tau$-fibrant objects in
Theorem~\ref{local-model-structure} allows one to prove easily that
these are closed under various operations. In the following lemmas we
discuss two examples.

\begin{lem}\label{bounded-tot-fibrant}
  Let $K_{\bullet}$ be a bounded complex of $\tau$-fibrant objects in
  $\U{\cal C}$. Then $\tots K_{\bullet}\in\U{\cal C}$ is
  $\tau$-fibrant.
\end{lem}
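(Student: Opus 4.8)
The plan is to use the criterion from Theorem~\ref{local-model-structure}\eqref{local-fibrations}: a presheaf of complexes is $\tau$-fibrant if and only if it satisfies $\tau$-descent. So I want to show that for every $\tau$-hypercover $c_\bullet\to c$, the natural map
\begin{equation*}
  (\tots K_\bullet)(c)\longrightarrow (\tots K_\bullet)(c_\bullet)
\end{equation*}
is a quasi-isomorphism, knowing that $K_i(c)\to K_i(c_\bullet)$ is a quasi-isomorphism for each of the finitely many $i$ with $K_i\neq 0$. First I would unwind the definitions: $(\tots K_\bullet)(c)$ is $\tots$ applied to the complex $K_\bullet(c)$ (evaluation commutes with the finite direct sums defining $\tots$ on a bounded complex), and $(\tots K_\bullet)(c_\bullet)=\dghom(\Lambda(c_\bullet),\tots K_\bullet)$ is by definition a product-total complex of the bicomplex $\bigl(K_q(c_p)\bigr)_{p,q}$ — where now one index comes from the simplicial direction of the hypercover and the other from the (bounded) complex direction. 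Since $K_\bullet$ is bounded, only finitely many rows (indexed by $q$) are nonzero, so there is no difference between $\tots$ and $\totp$ here and no convergence issue.

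The key step is then a spectral-sequence / finite-filtration argument comparing the two sides. Concretely, filter by the bounded complex-degree $q$: the map in question is the $\tots$ (equivalently $\totp$) of a map of bounded-below bicomplexes which, row by row in $q$, is the map $K_q(c)\to K_q(c_\bullet)$, a quasi-isomorphism by hypothesis (this is exactly the statement that each $K_q$ satisfies $\tau$-descent). By Lemma~\ref{lem:total-complex-qis} — applied with ${\cal A}=\Mod{\Lambda}$, to the bicomplexes obtained by placing $K_q(c)$ (resp.\ the complex $K_q(c_\bullet)$, i.e.\ the simplicial direction) in the appropriate spot — a map of bounded-below bicomplexes that is a row-wise quasi-isomorphism induces a quasi-isomorphism on total complexes. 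Hence $(\tots K_\bullet)(c)\to(\tots K_\bullet)(c_\bullet)$ is a quasi-isomorphism, which is what descent requires.

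The main obstacle is bookkeeping rather than anything deep: making sure the two ``$\tots$'' operations — the one forming $\tots K_\bullet$ and the one implicit in $\dghom(\Lambda(c_\bullet),-)$ — can be interchanged, and that the resulting triple complex can be reorganized as a single bicomplex to which Lemma~\ref{lem:total-complex-qis} applies. Boundedness of $K_\bullet$ is used precisely to guarantee finitely many nonzero rows, so that $\tots=\totp$ and the bounded-below hypothesis of the lemma is met; I would remark that this is where the ``bounded'' assumption enters and that the unbounded case is genuinely different (cf.\ the need for $\totp$ in the definition of $\dghom$). Alternatively, one can phrase the whole argument via the hyperhomology spectral sequence of the bicomplex, which converges by boundedness and degenerates to the claim, but invoking Lemma~\ref{lem:total-complex-qis} keeps everything within the elementary homological algebra already set up in §\ref{sec:cof}.
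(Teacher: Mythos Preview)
Your proposal is correct and follows essentially the same route as the paper's proof: reduce to the descent criterion of Theorem~\ref{local-model-structure}, use that each $K_q(c)\to K_q(c_\bullet)$ is a quasi-isomorphism, apply Lemma~\ref{lem:total-complex-qis} in the bounded direction, and invoke boundedness to identify $\tots$ with $\totp$ so that the resulting total complex agrees with $(\tots K_\bullet)(c_\bullet)$. The paper compresses your bookkeeping into the phrase ``one easily checks'', but the argument is the same.
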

\begin{proof}
  Let $c_{\bullet}\to c$ be a $\tau$-hypercover. We know that for any
  $l\in\Z$, $K_{l}(c)\to K_{l}(c_{\bullet})$ is a
  quasi-isomorphism. Since $K_{\bullet}$ is bounded below, it follows
  from Lemma~\ref{lem:total-complex-qis} that also
  \begin{equation*}
    \tots (K_{\bullet}(c))\to \tots (K_{\bullet}(c_{\bullet}))
  \end{equation*}
  is a quasi-isomorphism. Since $K_{\bullet}$ is bounded (hence
  $\tots$ and $\totp$ agree), one easily checks that this morphism can
  be identified with
  \begin{equation*}
    (\tots K_{\bullet})(c)\to (\tots K_{\bullet})(c_{\bullet}).\qedhere{}
  \end{equation*}
\end{proof}

Let $\kappa$ be a regular cardinal. We say that the site $({\cal
  C},\tau)$ is \emph{$\kappa$-noetherian} if every cover $\{c_{i}\to
c\}_{i\in I}$ has a subcover $\{c_{i}\to c\}_{i\in J\subset I}$ with
$|J|<\kappa$. An $\aleph_{0}$-noetherian site is called simply
\emph{noetherian}, as in~\cite[§III.3]{milne:etale-coh}. Also, recall
the notion of Verdier sites
from~\cite[Def.~8.1]{dugger-hollander-isaksen}.

\begin{lem}
  Let $({\cal C},\tau)$ be a $\kappa$-noetherian Verdier site,
  $\kappa>\aleph_{0}$. Then $\tau$-fibrant objects in $\U{\cal C}$ are
  closed under $\kappa$-filtered colimits.
\end{lem}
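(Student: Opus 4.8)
The plan is to identify the $\tau$-fibrant objects with the presheaves satisfying $\tau$-descent (Theorem~\ref{local-model-structure}(\ref{local-fibrations})), and then to check that $\tau$-descent is inherited by $\kappa$-filtered colimits because on a $\kappa$-noetherian Verdier site descent may be tested against hypercovers built out of fewer than $\kappa$ representables in each degree. So let $(K^{(i)})_{i\in I}$ be a $\kappa$-filtered diagram of $\tau$-fibrant objects of $\U{\cal C}$ and set $K:=\colim_{i}K^{(i)}$, a colimit computed objectwise in $\psh({\cal C},\Lambda)$. By Theorem~\ref{local-model-structure} each $K^{(i)}$ satisfies $\tau$-descent and I must show the same for $K$. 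By Theorem~\ref{hypercover-localization} together with Lemma~\ref{local=descent}, it is enough to show that $K$ satisfies ${\cal S}$-descent for \emph{one} dense class ${\cal S}$ of $\tau$-hypercovers.

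I shall use the class ${\cal S}$ of those $\tau$-hypercovers $c_{\bullet}\to c$ for which each $c_{n}$ ($n\ge 0$) is a coproduct of strictly fewer than $\kappa$ representables, and I claim that ${\cal S}$ is dense. This is shown by running the construction of a dense set of hypercovers (Fact~\ref{dense-subset}, \cite[§6]{dugger-hollander-isaksen}; the Verdier hypothesis guarantees that ${\cal C}$ has the finite limits needed to form the matching objects $(\mathrm{cosk}_{n-1}\mathrm{sk}_{n-1}c_{\bullet})_{n}$) while keeping track of sizes: representable presheaves are atomic, so a finite limit in $\psh({\cal C})$ of coproducts of fewer than $\kappa$ representables is again a coproduct of fewer than $\kappa$ representables — here one uses that $\kappa$ is regular, so a finite product of cardinals $<\kappa$ is $<\kappa$ — and hence every matching object of a partially constructed $c_{\bullet}\in{\cal S}$ is again of this form; by $\kappa$-noetherianness such a matching object is covered by a covering family with fewer than $\kappa$ members, so $c_{n}$ may be taken to be a coproduct of fewer than $\kappa$ representables. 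Verifying that this size-controlled procedure still refines an \emph{arbitrary} hypercover — \ie{} that ${\cal S}$ really is dense — is the point where one has to engage with the combinatorics of \cite[§6, §8]{dugger-hollander-isaksen}, and this is the main obstacle; everything else is formal.

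Granting this, fix $c_{\bullet}\to c$ in ${\cal S}$. By definition $K(c_{\bullet})=\totp(K(c_{0})\to K(c_{1})\to\cdots)$, and its term in homological degree $m$ is the product $\prod_{n\ge 0}K(c_{n})_{m+n}$; since there are countably many indices $n$ and each $c_{n}$ is a coproduct of fewer than $\kappa$ representables, this is a product indexed by a set of cardinality $<\kappa$ (again using that $\kappa>\aleph_{0}$ is regular). Now $\kappa$-filtered colimits in $\Mod{\Lambda}$ commute with limits over diagrams of cardinality $<\kappa$, and colimits in $\U{\cal C}$ are objectwise; hence $K(c)=\colim_{i}K^{(i)}(c)$ and $K(c_{\bullet})=\colim_{i}K^{(i)}(c_{\bullet})$ as complexes of $\Lambda$-modules, and the canonical map $K(c)\to K(c_{\bullet})$ is the $\kappa$-filtered colimit of the maps $K^{(i)}(c)\to K^{(i)}(c_{\bullet})$. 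Each of these is a quasi-isomorphism because $K^{(i)}$ satisfies $\tau$-descent, and filtered colimits of $\Lambda$-modules are exact, so $K(c)\to K(c_{\bullet})$ is a quasi-isomorphism as well. Thus $K$ satisfies ${\cal S}$-descent, hence $\tau$-descent, hence is $\tau$-fibrant by Theorem~\ref{local-model-structure}.
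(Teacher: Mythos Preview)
Your approach is essentially identical to the paper's: reduce $\tau$-fibrancy to ${\cal S}$-descent for a dense class ${\cal S}$ of hypercovers whose levels are coproducts of fewer than $\kappa$ representables, then commute the $\kappa$-filtered colimit past the resulting $<\kappa$-indexed products (using $\kappa>\aleph_{0}$ regular to absorb the countable product implicit in $\totp$). The paper simply invokes \cite[Rem.~8.7]{dugger-hollander-isaksen} for the existence of such a dense ${\cal S}$ on a $\kappa$-noetherian Verdier site, whereas you attempt to sketch the construction.

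That sketch, however, contains an incorrect step: your claim that ``a finite limit in $\psh({\cal C})$ of coproducts of fewer than $\kappa$ representables is again a coproduct of fewer than $\kappa$ representables'' is false in general --- matching objects $(\mathrm{cosk}_{n-1}\mathrm{sk}_{n-1}c_{\bullet})_{n}$ of semi-representable simplicial presheaves are typically \emph{not} semi-representable, and the Verdier hypothesis does not give ${\cal C}$ finite limits. The actual argument in \cite[§8]{dugger-hollander-isaksen} proceeds instead via \emph{basal} hypercovers, building each new level from covering families of the individual representable summands already present rather than from the full matching object; it is the $\kappa$-noetherian condition that then bounds the number of new summands. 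Since you already identify this as the main obstacle and point to the relevant reference, the cleanest fix is to replace your sketch by a direct citation of \cite[Rem.~8.7]{dugger-hollander-isaksen}, as the paper does.
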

\begin{proof}
  By~\cite[Rem.~8.7]{dugger-hollander-isaksen}, there is a dense set
  of $\tau$-hypercovers ${\cal S}$ such that for each $c_{\bullet}\to
  c\in{\cal S}$ and each $n\in\N$, $c_{n}$ is a coproduct
  $c_{n}\cong\coprod_{i\in I_{n}}c_{n,i}$ with $c_{n,i}$ representable
  and $|I_{n}|<\kappa$. By Theorem~\ref{hypercover-localization} and
  Lemma~\ref{local=descent}, being $\tau$-fibrant is equivalent to
  satisfying ${\cal S}$-descent. Now let $K:J\to\U{\cal C}$ be a
  $\kappa$-filtered diagram of $\tau$-fibrant objects, and
  $c_{\bullet}\to c\in{\cal S}$. The claim then follows from the
  isomorphism
  \begin{align*}
    (\colim_{j}K(j))(c_{\bullet})&\cong\totp
    (\colim_{j}K(j)_{p}(c_{q}))_{p,q}\\
    &\cong \totp (\prod_{i\in
      I_{q}}\colim_{j}K(j)_{p}(c_{q,i}))_{p,q}\\
    &\cong \colim_{j}\totp (\prod_{i\in
      I_{q}}K(j)_{p}(c_{q,i}))_{p,q}\\
    &\cong \colim_{j}(K_{j}(c_{\bullet})),
  \end{align*}
  as $\kappa$-filtered colimits commute with products indexed by
  cardinals smaller than $\kappa$.
\end{proof}

\begin{lem}
  Let $({\cal C},\tau)$ be a noetherian Verdier site. Any filtered
  colimit of bounded above $\tau$-fibrant objects in $\U{\cal C}$ is
  $\tau$-fibrant.
\end{lem}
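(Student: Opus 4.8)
The plan is to combine the previous two lemmas. The statement concerns a noetherian Verdier site and a filtered colimit $\colim_j K_j$ of bounded above $\tau$-fibrant objects. Since the site is noetherian it is in particular $\aleph_0$-noetherian, but the previous lemma requires $\kappa > \aleph_0$, so a direct application is not available; the bounded-above hypothesis is there precisely to compensate. My approach would be to reduce $\tau$-descent for the colimit to $\tau$-descent of the individual $K_j$ by commuting the colimit past the formation of $K(c_\bullet)$, exactly as in the proof of the $\kappa$-noetherian lemma, but being careful about the fact that $K(c_\bullet)$ is a \emph{product} total complex, so the colimit does not obviously commute with it.

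First I would invoke \cite[Rem.~8.7]{dugger-hollander-isaksen} to fix a dense set ${\cal S}$ of $\tau$-hypercovers $c_\bullet\to c$ with each $c_n\cong\coprod_{i\in I_n}c_{n,i}$ a \emph{finite} coproduct of representables (finiteness is exactly the $\aleph_0$-noetherian condition). By Theorem~\ref{hypercover-localization} and Lemma~\ref{local=descent}, being $\tau$-fibrant is equivalent to satisfying ${\cal S}$-descent, so it suffices to check that $\colim_j K_j$ satisfies ${\cal S}$-descent for every $c_\bullet\to c\in{\cal S}$. Now $(\colim_j K_j)(c_\bullet)=\totp(\colim_j K_j(c_p)(c_q))_{p,q}$, and since each $I_q$ is finite, $\colim_j$ commutes with the products $\prod_{i\in I_q}$ trivially. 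The remaining point is to commute $\colim_j$ past the \emph{product} total complex $\totp$ in the cohomological direction. This is where the bounded-above hypothesis enters: each $K_j$ is bounded above, so in each fixed total degree the product total complex is a \emph{finite} product (the complex $K_j(c_\bullet)$ involves $K_j(c_q)$ only for $q\geq 0$, and $K_j(c_q)_p$ vanishes for $p\gg 0$), whence $\totp$ and $\tots$ agree and commute with filtered colimits. I would therefore argue that in each degree $n$,
\begin{equation*}
  (\colim_j K_j)(c_\bullet)_n \cong \colim_j \bigl(K_j(c_\bullet)_n\bigr),
\end{equation*}
naturally in $n$, giving $(\colim_j K_j)(c_\bullet)\cong\colim_j\bigl(K_j(c_\bullet)\bigr)$ as complexes. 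Similarly $(\colim_j K_j)(c)=\colim_j K_j(c)$ since filtered colimits commute with shifts and the evaluation is exact.

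Putting these together, the augmentation $(\colim_j K_j)(c)\to(\colim_j K_j)(c_\bullet)$ is identified with $\colim_j\bigl(K_j(c)\to K_j(c_\bullet)\bigr)$, a filtered colimit of quasi-isomorphisms (each $K_j$ being $\tau$-fibrant hence satisfying ${\cal S}$-descent). Since $\psh({\cal C},\Lambda)$ is a Grothendieck abelian category, filtered colimits are exact and therefore preserve quasi-isomorphisms; hence the augmentation is a quasi-isomorphism and $\colim_j K_j$ satisfies ${\cal S}$-descent, so it is $\tau$-fibrant. The main obstacle, and the only subtle point, is the bookkeeping needed to see that the bounded-above hypothesis makes the product total complex degreewise a finite product so that $\totp$ commutes with the filtered colimit; once that is settled the rest is formal. (One should also note that "bounded above" for the $K_j$ need not give a uniform bound, but this does not matter: for a fixed hypercover and a fixed total degree $n$, only finitely many bidegrees $(p,q)$ with $q\geq 0$ and $p+q=n$ contribute, and in each of these $K_j(c_q)_p$ is eventually zero as $p$ grows, which is automatic; what one really uses is that the total complex in the $q$-direction is a colimit-of-finite-stages since $c_\bullet$ is a \emph{connective} simplicial object, so $\totp=\tots$ for the bicomplex $K_j(c_\bullet)$ regardless, and $\tots$ always commutes with filtered colimits.)
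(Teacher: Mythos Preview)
Your main argument is exactly the paper's: invoke \cite[Rem.~8.7]{dugger-hollander-isaksen} to obtain a dense set ${\cal S}$ of hypercovers with each level a \emph{finite} coproduct of representables (this is the noetherian hypothesis), reduce $\tau$-fibrancy to ${\cal S}$-descent via Theorem~\ref{hypercover-localization} and Lemma~\ref{local=descent}, and then observe that bounded-above forces the product totalization to involve only finitely many factors in each degree, so it commutes with filtered colimits. This is precisely what the paper's one-sentence proof says.

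Your parenthetical, however, is confused and undercuts the correct argument that preceded it. Connectivity of $c_\bullet$ alone does \emph{not} make $\totp=\tots$: in the paper's homological convention one has $K(c_\bullet)_n=\prod_{q\geq 0}K_{n+q}(c_q)$, an infinite product unless $K$ is bounded above. So the bounded-above hypothesis is exactly what makes the product finite; it is not ``automatic'' and does not hold ``regardless''. You also correctly flag the worry about non-uniform bounds, but your dismissal does not resolve it: even granting $\totp=\tots$ for each individual $K_j(c_\bullet)$, the identification $(\colim_j K_j)(c_\bullet)\cong\colim_j K_j(c_\bullet)$ would need $\totp=\tots$ for the bicomplex built from $\colim_j K_j$, and that bicomplex is not bounded above if the $N_j$ are unbounded. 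The paper's terse proof is silent on this point as well; strictly speaking, the displayed chain of isomorphisms in the previous lemma's proof goes through verbatim only when the bounds are uniform.
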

\begin{proof}
  The proof is essentially the same as in the previous lemma. We must
  assume bounded above objects so that the product totalization
  involves only finitely many factors in each degree hence commutes
  with filtered colimits.
\end{proof}

\section{Fibrant replacement}
\label{sec:fib}
In this section we would like to give an ``explicit'' fibrant
replacement functor in $\U{\cal C}/\tau$ using the Godement
resolution. It is a direct translation of the analogous construction
for simplicial (pre)sheaves in~\cite[p.~66ff]{morel-voevodsky-a1},
with, again, the only problem created by the unboundedness of our
complexes. We first establish the tools to overcome this difficulty.

\subsection{Local model structure and truncation}
Let $n\in\Z$ and consider the functor $\Gamma\tau_{\geq n}:\U{\cal
  C}\to \simp^{\mathrm{op}}\psh({\cal C})$. Applying it objectwise,
this generalizes to a functor defined on diagrams with values in
$\U{\cal C}$ which we still denote by $\Gamma\tau_{\geq n}$.
\begin{lem}\label{dold-kan-holim}
  The canonical arrow
  \begin{equation*}
    \dR\lim_{\simp}\Gamma\tau_{\geq n}K\to\Gamma\tau_{\geq n}\dR\lim_{\simp}K
  \end{equation*}
  is a weak homotopy equivalence for every $K\in(\U{\cal C})^{\simp}$.
\end{lem}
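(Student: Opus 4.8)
The statement compares a homotopy limit over the cosimplicial indexing category $\simp$ with the functor $\Gamma\tau_{\geq n}$. My approach is to reduce everything to explicit total complexes using the results already established in §\ref{sec:dold-kan}, and then to invoke a standard fact about the interaction of good truncation with products and (co)totalizations in a Grothendieck abelian category. First I would use Lemma~\ref{holim-complexes} to identify $\dR\lim_{\simp}K$ with $\totp(K)\simeq\totp(NK)$, the product total complex of the cosimplicial (hence coconnective double) complex $NK$ in $\U{\cal C}$. On the other side, applying the same lemma objectwise in $\simp^{\mathrm{op}}\psh({\cal C})$ — where homotopy limits over $\simp$ are again computed by a product totalization after normalization, via the Dold–Kan correspondence for simplicial presheaves — I would identify $\dR\lim_{\simp}\Gamma\tau_{\geq n}K$ with (the simplicial presheaf corresponding to) $\totp(N(\Gamma\tau_{\geq n}K))$. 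The point is then to compare $\tau_{\geq n}\,\totp(NK)$ with $\totp$ of the levelwise truncations $\tau_{\geq n}(NK_{\bullet})$.

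**Key steps, in order.** (1) Rewrite both sides as product total complexes of coconnective double complexes of presheaves of $\Lambda$-modules, using Lemma~\ref{holim-complexes} and Fact~\ref{moore-normalized}, together with the compatibility of $N$ and $\Gamma$ with the passage between simplicial presheaves and connective complexes. (2) Observe that since $NK$ is cosimplicial (concentrated in cohomological degrees $\geq 0$, i.e.\ in the language of the paper, $NK_q$ sits in "degree $-q$" after applying $\tots$/$\totp$), the double complex is bounded on one side, so in each total degree the product defining $\totp$ is \emph{finite}. Consequently the canonical map $\tau_{\geq n}\totp(NK)\to\totp(NK)$ can be analysed degreewise. (3) The heart of the matter: show that the comparison map
\begin{equation*}
  \totp\bigl(\tau_{\geq n}(NK)_{\bullet}\bigr)\longrightarrow \tau_{\geq n}\,\totp(NK)
\end{equation*}
is a quasi-isomorphism. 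Here $\tau_{\geq n}$ on the left is applied to each presheaf $NK_q\in\U{\cal C}$ and then one totalizes the resulting double complex; on the right one first totalizes and then truncates. I would prove this by a filtration/induction argument on the number of nonzero rows, exactly parallel to the proof of Lemma~\ref{lem:total-complex-qis}: the stupid truncations $(NK)_{\bullet,\leq m}$ give short exact sequences of total complexes, good truncation $\tau_{\geq n}$ of a complex is exact up to quasi-isomorphism on such sequences when one row is concentrated in a single degree, and then one passes to the colimit over $m$ using Lemma~\ref{qis-sequential-colimit} — with the caveat that here the truncation interacts with an \emph{inverse} limit (product), so one must instead argue directly that a product of quasi-isomorphisms of uniformly bounded-below complexes is a quasi-isomorphism, which holds in a Grothendieck abelian category because products are exact on bounded-below complexes up to the Mittag-Leffler/(AB4\textsuperscript{*}) subtlety — but finiteness in each total degree sidesteps this entirely. (4) Finally, translate back: $\totp(\tau_{\geq n}(NK)_\bullet)$ is exactly $\totp(N(\Gamma\tau_{\geq n}K))$ after re-indexing (since $N\Gamma\simeq\id$ on connective complexes and $\Gamma\tau_{\geq n}$ applied objectwise then normalized recovers the truncated rows), and $\tau_{\geq n}\totp(NK)$ is weakly equivalent to $\Gamma\tau_{\geq n}\dR\lim_\simp K$ by step (1). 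Chasing the canonical maps shows the composite is the arrow in the statement.

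**Main obstacle.** The genuinely delicate point is step (3): commuting the good truncation $\tau_{\geq n}$ past the \emph{product} totalization $\totp$. Good truncation is not exact, and products in a Grothendieck category need not be exact, so in general $\tau_{\geq n}\totp$ and $\totp(\tau_{\geq n}(-))$ can differ. What saves the argument is that $NK$ is \emph{coconnective} (a cosimplicial object has no negative rows), so the double complex $NK$ is bounded below in the row direction; hence in every fixed total degree the product $\prod_{p+q=\text{deg}}$ is really a finite product, $\totp$ agrees with $\tots$ there, and the whole comparison localizes to an honest finite computation where Lemma~\ref{lem:total-complex-qis} (or its proof technique) applies verbatim. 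I would make sure to spell out this boundedness reduction carefully, since it is the only thing standing between the slick reduction and a false statement; everything else is bookkeeping with the Dold–Kan equivalence and the identifications already recorded in Lemmas~\ref{hocolim-complexes} and~\ref{holim-complexes}.
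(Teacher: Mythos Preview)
Your approach is genuinely different from the paper's, and it contains a real gap at exactly the point you flag as the ``main obstacle.''

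The paper does not compute anything explicitly. It simply observes that $\Gamma\tau_{\geq n}:\U{\cal C}\to\simp^{\mathrm{op}}\psh({\cal C})$ is a right Quillen functor for the projective model structures (this is Proposition~\ref{dold-kan}), and that right Quillen functors induce \emph{continuous} morphisms of derivators in the sense of Cisinski~\cite{cisinski-imagesdirectes}; in particular they commute with homotopy limits. Since moreover $\Gamma\tau_{\geq n}$ preserves all weak equivalences (not just between fibrant objects), it coincides with its right derived functor and the canonical comparison arrow is a weak equivalence. That is the whole proof.

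Your explicit route via totalizations breaks down at step~(3). You assert that because $NK$ is coconnective in the cosimplicial direction (rows indexed by $q\geq 0$), ``in each total degree the product defining $\totp$ is finite,'' and you rely on this to sidestep the failure of exactness of infinite products. But this is false: each $NK^{q}$ is an \emph{unbounded} complex in $\U{\cal C}$, so in total degree $m$ one has
\[
  \totp(NK)_{m}=\prod_{q\geq 0}(NK^{q})_{m+q},
\]
which is an infinite product for every $m$. Boundedness in one direction of a bicomplex only yields finite products in each total degree when the \emph{other} direction is also bounded (on the opposite side); here it is not. Consequently your reduction to Lemma~\ref{lem:total-complex-qis} and to ``finite computations'' does not go through, and the comparison between $\totp(\tau_{\geq n}(NK^{\bullet}))$ and $\tau_{\geq n}\totp(NK)$ is precisely the kind of statement that can fail for infinite products in a Grothendieck abelian category. (There is also a smaller issue: $\totp(\tau_{\geq n}(NK^{\bullet}))$ is not concentrated in degrees $\geq n$, so the factorization through $\tau_{\geq n}\totp(NK)$ you implicitly use needs more care.)

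If you want to rescue a hands-on argument, you would need a different mechanism than degreewise finiteness---for instance, a convergence argument for the associated spectral sequences, or a direct appeal to the fact that $\tau_{\geq n}$ is a right adjoint and hence commutes with ordinary limits, combined with a careful fibrant-replacement analysis. But at that point the abstract argument via right Quillen functors is both shorter and cleaner.
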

\begin{proof}
  One way to see this is as follows. $\Gamma\tau_{\geq n}$ is a right
  Quillen functor for the projective model structures on ${\cal
    M}:=\U{\cal C}$ and ${\cal N}:=\psh({\cal
    C},\sset)$. It follows that the induced morphism of
  derivators $\mathbb{D}_{{\cal M}}\to\mathbb{D}_{{\cal
      N}}$ 
  is continuous (see~\cite[Pro.~6.12]{cisinski-imagesdirectes}), in
  particular it commutes with homotopy limits. The claim now follows
  from the fact that $\Gamma\tau_{\geq n}$ takes quasi-isomorphisms to
  weak homotopy equivalences hence doesn't need to be derived.
\end{proof}

\begin{pro} \label{truncation}\mbox{}
  \begin{enumerate}
  \item For a morphism $f : K \to K'$ in $\U{\cal C}$ the following
    are equivalent:
    \begin{enumerate}
    \item $f$ is a $\tau$-local equivalence.
    \item $\Gamma\tau_{\geq n}f$ is a $\tau$-local equivalence for all $n\in\Z$.
    \item $\Gamma\tau_{\geq n}f$ is a $\tau$-local equivalence for $n\ll 0$.
    \end{enumerate}
  \item For $K \in \U{\cal C}$ the following are equivalent:
    \begin{enumerate}[label=(\alph{enumii}),ref=(\alph{enumii})]
    \item $K$ is $\tau$-fibrant.\label{truncation.2.1}
    \item $\Gamma\tau_{\geq n}K$ is $\tau$-fibrant for all $n\in\Z$.\label{truncation.2.2}
    \item $\Gamma\tau_{\geq n}K$ is $\tau$-fibrant for $n\ll 0$.\label{truncation.2.3}
    \end{enumerate}
 \end{enumerate}
\end{pro}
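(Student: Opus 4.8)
The plan is to characterize, for both $K$ and $\Gamma\tau_{\geq n}K$, the notions of $\tau$-local equivalence and $\tau$-fibrancy purely in terms of (sheafified) homology --- the point being that passing from $K$ to $\Gamma\tau_{\geq n}K$ simply discards the information in degrees $<n$ --- and then to observe that the resulting families of conditions agree once one lets $n\to-\infty$.

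First I would record the two facts that make this go. (i) $\Gamma\tau_{\geq n}K$ is objectwise a simplicial $\Lambda$-module, hence objectwise a Kan complex, so by~\cite{dugger-hollander-isaksen} it is $\tau$-fibrant precisely when it satisfies descent along every $\tau$-hypercover. (ii) By the computation already in the proof of Lemma~\ref{local=descent}, the $m$-th homotopy presheaf of $\Gamma\tau_{\geq n}K$ is $\h_{m+n}K$ for $m\geq 0$; equivalently, a morphism $g$ of complexes induces a weak homotopy equivalence $\Gamma\tau_{\geq n}(g)$ iff $\h_q(g)$ is an isomorphism for all $q\geq n$, and $\Gamma\tau_{\geq n}f$ is a $\tau$-local equivalence of simplicial presheaves iff $a_\tau\h_q(f)$ is an isomorphism for all $q\geq n$. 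Granting this, part (1) is immediate: $f$ is a $\tau$-local equivalence iff $a_\tau\h_q(f)$ is an isomorphism for \emph{all} $q\in\Z$, which holds iff it holds in every range $q\geq n$ with $n\ll 0$; hence (a)$\Leftrightarrow$(b)$\Leftrightarrow$(c).

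For part (2) I would fix a $\tau$-hypercover $c_\bullet\to c$ and identify the descent comparison map for $\Gamma\tau_{\geq n}K$. By Lemma~\ref{dold-kan-holim} the homotopy limit $\dR\lim_{\simp}\Gamma\tau_{\geq n}K(c_\bullet)$ is weakly equivalent to $\Gamma\tau_{\geq n}\dR\lim_{\simp}K(c_\bullet)$, and by Lemma~\ref{holim-complexes} the latter is the product total complex $\dghom(\Lambda(c_\bullet),K)=K(c_\bullet)$; so the descent comparison for $\Gamma\tau_{\geq n}K$ is (weakly equivalent to) $\Gamma\tau_{\geq n}$ applied to the augmentation $K(c)\to K(c_\bullet)$. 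Combining with (i) and (ii), $\Gamma\tau_{\geq n}K$ is $\tau$-fibrant iff $\h_q(K(c))\to\h_q(K(c_\bullet))$ is an isomorphism for all $q\geq n$ and all hypercovers, whereas by Theorem~\ref{local-model-structure} $K$ is $\tau$-fibrant iff this holds for all $q\in\Z$. As in part (1) these agree in the limit $n\to-\infty$, giving (a)$\Leftrightarrow$(b)$\Leftrightarrow$(c).

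The proof is formal given the earlier results; the one step I expect to require some care --- but not a genuine obstacle --- is checking that the descent comparison for the simplicial presheaf $\Gamma\tau_{\geq n}K$ is really $\Gamma\tau_{\geq n}$ of the chain-level comparison $K(c)\to K(c_\bullet)$. This combines naturality of the comparison in Lemma~\ref{dold-kan-holim} with the fact that $\Gamma\tau_{\geq n}$ commutes with the products $\prod_i K(c_{q,i})$ through which $\dghom(\Lambda(c_\bullet),K)$ is computed. The substantive input --- that truncation commutes with homotopy limits --- has already been isolated as Lemma~\ref{dold-kan-holim}.
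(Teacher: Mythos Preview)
Your argument is correct. For part~(1) it agrees with the paper's one-line observation that the homotopy presheaves of $\Gamma\tau_{\geq n}K$ are the homology presheaves of $K$ in degrees $\geq n$. For part~(2) you take a genuinely different route. The paper deduces (a)$\Rightarrow$(b) from Lemma~\ref{dold-kan-local}: since $\Gamma\tau_{\geq 0}$ is right Quillen for the local model structures it preserves fibrant objects, and one shifts for general~$n$. For (c)$\Rightarrow$(a) the paper picks a $\tau$-fibrant replacement $f\colon K\to K'$, notes that each $\Gamma\tau_{\geq n}f$ is then a $\tau$-local equivalence between $\tau$-fibrant simplicial presheaves hence an objectwise weak equivalence, and concludes that $f=\varinjlim_n\tau_{\geq n}f$ is an objectwise quasi-isomorphism, so $K$ is $\tau$-fibrant. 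You instead unwind both conditions as statements about the single family of maps $\h_q(K(c))\to\h_q(K(c_\bullet))$ indexed by $q$ and by hypercovers, using Lemma~\ref{dold-kan-holim} (together with Lemma~\ref{holim-complexes} and the fact that $\Gamma\tau_{\geq n}$ commutes with the relevant products) to identify the simplicial descent comparison with $\Gamma\tau_{\geq n}$ of the chain-level one; the equivalences then become tautologies about ranges of~$q$. Your approach is more explicit and exhibits Lemma~\ref{dold-kan-holim} as the key input, at the cost of invoking Theorem~\ref{local-model-structure} and the Dugger--Hollander--Isaksen description of simplicial fibrancy directly for both implications; the paper's approach stays within formal model-category reasoning via Lemma~\ref{dold-kan-local} and the fibrant-replacement trick, without appealing to Lemma~\ref{dold-kan-holim} here.
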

\begin{proof}\mbox{}
  \begin{enumerate}
  \item This is obvious since $\tau$-local equivalences are defined
    via (the sheafification of) the homology groups which coincide
    with the homotopy groups after applying $\Gamma$.
  \item The implication
    ``\ref{truncation.2.1}$\Rightarrow$\ref{truncation.2.2}'' follows
    from Lemma~\ref{dold-kan-local}. The implication
    ``\ref{truncation.2.2}$\Rightarrow$\ref{truncation.2.3}'' is
    trivial. For the implication
    ``\ref{truncation.2.3}$\Rightarrow$\ref{truncation.2.1}'' let
    $f:K\to K'$ be a $\tau$-fibrant replacement. Again by
    Lemma~\ref{dold-kan-local}, $\Gamma\tau_{\geq n}(f)$ is a
    $\tau$-local equivalence between $\tau$-fibrant objects hence it
    is a sectionwise weak equivalence. It follows that $\tau_{\geq
      n}(f)$ is a sectionwise weak equivalence. As $f$ is the filtered
    colimit of $\tau_{\geq n}(f)$, $f$ is a sectionwise weak
    equivalence.\qedhere{}
  \end{enumerate}
\end{proof}

\subsection{Godement resolution}

Now suppose that $({\cal C},\tau)$ has enough points. This means that
there is a set ${\cal P}$ of morphisms of sites $p:\set\to ({\cal
  C},\tau)$ such that a morphism $f$ of sheaves of sets on ${\cal C}$
is an isomorphism if and only if $p^*f$ is an isomorphism for all
$p\in {\cal P}$. There is an induced morphism of sites $\set^{{\cal
    P}}\to ({\cal C},\tau)$, and we denote by $(a^{*},a_{*}):\U{\cal
  C}\to \ch(\Lambda)^{{\cal P}}$ the induced adjunction. The
associated comonad induces functorially for each $K\in\U{\cal C}$ a
coaugmented cosimplicial object $K\to G^{\bullet}(K)$, where
$G^{n}(K)=(a_{*}a^{*})^{n+1}(K)\in\U{\cal C}$. The \emph{Godement
  resolution} of $K$ is defined to be
\begin{equation*}
  \god(K):=\totp(G^{\bullet}(K))
\end{equation*}
which according to Lemma~\ref{holim-complexes} is a model for
$\dR\lim_{\simp}G^{\bullet}(K)$.

Recall~\cite[Def.~1.31]{morel-voevodsky-a1} that the site $({\cal
  C},\tau)$ is said to be of finite type if ``Postnikov towers
converge''.
\begin{thm}
  There is a functor $\god:\U{\cal C}\to \U{\cal C}$ and a natural
  transformation $\id\to\god$ satisfying:
\begin{enumerate}
\item $\god$ is an exact functor of abelian categories.\label{god.exact}
\item $\god$ takes each presheaf of complexes to a $\tau$-fibrant sheaf
  of complexes.\label{god.fibrant}
\item $\god$ takes fibrations (\ie{} degreewise surjections) to
  $\tau$-fibrations.
\item If $({\cal C},\tau)$ is a finite type site, then $K \to \god(K)$
  is a $\tau$-local equivalence for any $K$.
\end{enumerate}
\end{thm}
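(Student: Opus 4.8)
The plan is to prove the four assertions in the order (1), (2), (4), obtaining (3) for free from (1) and (2). Assertion (1) is purely homological: each point $p\colon\set\to({\cal C},\tau)$ has exact inverse image $p^{*}$ (a filtered colimit of evaluation functors) and exact direct image $p_{*}$, since sectionwise $(p_{*}M)(c)=M^{p^{*}(c)}$ is a product of copies of $M$ and products are exact in $\Mod\Lambda$. Hence $a^{*}$ and $a_{*}$, and therefore each $G^{n}=(a_{*}a^{*})^{n+1}$, are exact; and as $\totp(G^{\bullet}(K))$ is degreewise a product of the complexes $G^{n}(K)$ in various degrees, $\god$ is exact. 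Granting (2), assertion (3) follows: by Theorem~\ref{local-model-structure} a $\tau$-fibration is a degreewise surjection with kernel satisfying $\tau$-descent; if $f$ is a degreewise surjection then so is $\god(f)$ by exactness, and $\ker\god(f)=\god(\ker f)$ satisfies $\tau$-descent by (2).

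For (2), I would first observe that $\god(K)$ is a sheaf because $a_{*}$ factors through $\sh_{\tau}({\cal C},\ch(\Lambda))$ and sheaves are closed under the limits defining $\totp$. The substantive point is $\tau$-fibrancy, i.e.\ $\tau$-descent (Theorem~\ref{local-model-structure}), and I would prove that each $G^{n}(K)$ already satisfies it. Writing $G^{n}(K)=a_{*}M=\prod_{p}p_{*}M_{p}$ and using the adjunction to identify, for a hypercover $c_{\bullet}\to c$,
\[
  \dghom(\Lambda(c_{\bullet}),p_{*}M_{p})\;\cong\;\dghom(\Lambda(p^{*}c_{\bullet}),M_{p})
\]
(where $\Lambda(-)$ denotes the Moore complex of the free simplicial $\Lambda$-module, and the right-hand side is the internal hom complex of $\Lambda$-modules), one reduces to the claim that $p^{*}c_{\bullet}\to p^{*}c$ is a weak equivalence of simplicial sets. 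This holds because $p^{*}$ is exact and cocontinuous, hence carries the hypercover $c_{\bullet}\to c$ to a map onto the discrete simplicial set $p^{*}c$ all of whose fibres are hypercovers of a point, that is, contractible Kan complexes; and $\Lambda(p^{*}c_{\bullet})$, being a connective complex of free modules, is cofibrant (Fact~\ref{bnd}), so applying $\dghom(-,M_{p})$ to the weak equivalence yields a quasi-isomorphism, as does passing to products over $p$. Granting this, $\god(K)=\totp(G^{\bullet}(K))\simeq\dR\lim_{\simp}G^{\bullet}(K)$ by Lemma~\ref{holim-complexes}; a Reedy-fibrant replacement of $G^{\bullet}(K)$ in $\U{\cal C}/\tau$ is an objectwise $\tau$-local equivalence between $\tau$-fibrant objects, hence an objectwise quasi-isomorphism, which $\totp$ preserves (Lemma~\ref{holim-complexes}); so $\totp(G^{\bullet}(K))$ represents the homotopy limit, formed in $\U{\cal C}/\tau$, of an objectwise $\tau$-fibrant cosimplicial object, and is therefore $\tau$-fibrant.

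Assertion (4) is where the finite-type hypothesis enters and is the main obstacle, since one cannot argue directly from an extra codegeneracy on $a^{*}(K\to G^{\bullet}(K))$: stalks do not commute with the infinite products hidden in $\totp$. The plan is to reduce to the simplicial Godement resolution treated in~\cite[p.~66ff]{morel-voevodsky-a1}. By Proposition~\ref{truncation} it suffices to prove $\Gamma\tau_{\geq n}(K\to\god(K))$ is a $\tau$-local equivalence for every $n\in\Z$. Since $\Gamma\tau_{\geq n}$ (as a functor to simplicial presheaves) commutes with the filtered colimits and products out of which $a^{*}$ and $a_{*}$ are built, it intertwines $G^{\bullet}$ with the simplicial Godement construction $G^{\bullet}_{\mathrm{s}}$; combined with Lemma~\ref{dold-kan-holim} this gives
\[
  \Gamma\tau_{\geq n}\god(K)\;\simeq\;\dR\lim_{\simp}\Gamma\tau_{\geq n}G^{\bullet}(K)\;\cong\;\dR\lim_{\simp}G^{\bullet}_{\mathrm{s}}(\Gamma\tau_{\geq n}K),
\]
the Morel--Voevodsky Godement resolution of the simplicial presheaf $\Gamma\tau_{\geq n}K$. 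For a site with enough points that is of finite type this resolution is a local weak equivalence, and translating back through the Dold--Kan correspondence --- isomorphisms on sheafified homotopy groups correspond to isomorphisms on sheafified homology --- shows that $\Gamma\tau_{\geq n}(K\to\god(K))$ is a $\tau$-local equivalence, which completes the argument.
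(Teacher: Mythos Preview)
Your argument is correct. Parts~(1), (3) and~(4) match the paper's proof essentially line for line: exactness of the constituent functors, the deduction of~(3) from~(1) and~(2) via Theorem~\ref{local-model-structure}, and the reduction of~(4) to the Morel--Voevodsky simplicial Godement resolution through Proposition~\ref{truncation} and Lemma~\ref{dold-kan-holim}.

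Part~(2), however, takes a genuinely different route. The paper applies the same truncation machinery as in~(4): it invokes Proposition~\ref{truncation} to replace $\tau$-fibrancy of $\god(K)$ by $\tau$-fibrancy of $\Gamma\tau_{\geq n}\god(K)$, identifies the latter with the simplicial Godement resolution $\god(\Gamma\tau_{\geq n}K)$ via Lemma~\ref{dold-kan-holim} and the commutation of $\Gamma\tau_{\geq n}$ with $a_{*}a^{*}$, and then cites \cite[Pro.~1.59]{morel-voevodsky-a1}. You instead prove directly that each $G^{n}(K)=a_{*}a^{*}G^{n-1}(K)$ satisfies $\tau$-descent, by pulling a hypercover back through a point to a trivial Kan fibration in $\set$, and then pass to $\totp$ by a Reedy replacement argument in $\U{\cal C}/\tau$. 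Your approach has the advantage of being self-contained (no appeal to the simplicial result in~\cite{morel-voevodsky-a1} for this part) and of yielding the stronger intermediate statement that every skyscraper $a_{*}M$ is $\tau$-fibrant; the paper's approach is more uniform, reusing the reduction-to-simplicial-presheaves mechanism for both~(2) and~(4) and thereby avoiding the Reedy bookkeeping.
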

\begin{proof}\mbox{}
  \begin{enumerate}
  \item $\god$ is the composition of exact functors thus exact.
  \item We use Proposition~\ref{truncation} to check that $\god(K)$ is
    $\tau$-fibrant. Thus let $n\in\Z$, and $c_{\bullet}\to c$ a
    $\tau$-hypercover. We need to check that
    \begin{equation*}
      \Gamma\tau_{\geq n}\god(K)(c)\to\dR\lim_{\simp}\Gamma\tau_{\geq n}\god(K)(c_{\bullet})
    \end{equation*}
    is a weak homotopy equivalence. This will follow
    from~\cite[Pro.~1.59]{morel-voevodsky-a1} if we can prove
    that the canonical arrow
    \begin{equation*}
      \god(\Gamma\tau_{\geq n}K)\to\Gamma\tau_{\geq n}\god(K)
    \end{equation*}
    is an objectwise weak homotopy equivalence, where the left hand
    side denotes the Godement resolution for simplicial (pre)sheaves
    as defined in~\cite[p.~66]{morel-voevodsky-a1}, analogous to our
    construction above. By Lemma~\ref{dold-kan-holim}, we see that
    $\Gamma\tau_{\geq n}$ commutes with $\dR\lim_{\simp}$ up to
    objectwise weak equivalence, so we reduce to show that it also
    commutes with $a_{*}a^{*}$ up to objectwise weak
    equivalence. 

    $a_{*}a^{*}$ is applied degreewise and is a composition of
    left-exact functors hence clearly commutes with $\tau_{\geq
      n}$. It is also clear that $a_{*}a^{*}$ commutes with the Moore
    complex functor therefore the same holds for the quasi-inverse
    $\Gamma$. Finally, $a_{*}a^{*}$ commutes with the forgetful
    functor $\Mod{\Lambda}\to\set$.
  \item Let $f$ be an epimorphism with kernel $K$ in $\U{\cal C}$. By
    part~\ref{god.exact}, $\god(f)$ is an epimorphism with kernel
    $\god(K)$, which is $\tau$-fibrant by
    part~\ref{god.fibrant}. $\god(f)$ is thus a $\tau$-fibration by
    Theorem~\ref{local-model-structure}.
  \item Again, by Proposition~\ref{truncation}, we need to check that
    \begin{equation*}
      \Gamma\tau_{\geq n}K\to\Gamma\tau_{\geq n}\god(K)
    \end{equation*}
    is a $\tau$-local equivalence for all $n\in\Z$. But by the same
    reasoning as in part~\ref{god.fibrant}, the target of this
    morphism is identified (up to sectionwise weak homotopy
    equivalence) with $\god(\Gamma\tau_{\geq n}K)$ hence the claim
    follows from~\cite[Pro.~1.65]{morel-voevodsky-a1}.
  \end{enumerate}
\end{proof}
\bibliographystyle{plain} 

\end{document}